\def\NZQ{\mathbb}               
\def\ZZ{{\NZQ Z}}
\def\Z{{\NZQ Z}}
\def\Um{{\operatorname{Um}}}          
\def\E{{\operatorname{E}}}            
\def\EO{{\operatorname{EO}}}            
\def\ESp{{\operatorname{ESp}}}          
\def\GL{{\operatorname{GL}}}          
\def\K{{\operatorname{K}}}            
\def\M{{\operatorname{M}}}            
\def\O{{\operatorname{O}}}
\def\K_1O{{\operatorname{K_1O}}}
\def\MS{{\operatorname{MS}}}          
\def\MSE{{\operatorname{MSE}}}        
\def\SL{{\operatorname{SL}}}          
\def\SK{{\operatorname{SK}}}          
\def\Sp{{\operatorname{Sp}}}          
\def\Um{{\operatorname{Um}}}          
\def\vpmod{\!\!\!\pmod} 
\def\WMS{{\operatorname{WMS}}}     
\def\wms{{\operatorname{wms}}}       
\def\SK{{\operatorname{SK}}}
\newtheorem{theorem}{Theorem}[section]
\newtheorem{lemma}[theorem]{Lemma}
\newtheorem{corollary}[theorem]{Corollary}
\newtheorem{proposition}[theorem]{Proposition}
\newtheorem{remark}[theorem]{Remark}
\newtheorem{definition}[theorem]{Definition}
\newtheorem{question}[theorem]{Question}
\newtheorem*{acknowledgement}{Acknowledgement}
\newcommand{\inD}[1][\relax]{\def\argone{#1}\def\temprelax{\relax}
  \ifx\argone\temprelax\right.\else\,\middle|#1\right.{}\fi}
\begin{document}

\title{\small A nice group structure on the 
orbit space of unimodular rows-II}
\author{{\bf Anjan Gupta, Anuradha Garge, Ravi A. Rao}\\
Tata Institute of Fundamental Research, Mumbai {\small \&} \\ 
 Center for Basic Sciences, Mumbai {\small \&}
\\  Tata Institute of Fundamental Research, Mumbai }
\footnote{Correspondence author: Ravi A. Rao; 
{\it email: ravi@math.tifr.res.in}}

\maketitle

\noindent {\it Mathematics Subject Classification 2010: 13C10, 19G12, 
19D45, 55Q55}

\vskip3mm

\noindent {\it Key words: Unimodular row, Relative unimodular row, Affine
algebra, Group structure,  Relative orbit space, Nice group structure.}
\subjclass{}

\vskip3mm

\begin{abstract} We establish an Excision type theorem for niceness of
group structure on  the orbit space of unimodular rows of length $n$
modulo elementary action.  This permits us to establish niceness for
relative versions of results for the cases when $n = d+1$;  $d$ being
the dimension of the base algebra. We then study and establish
niceness for the case when $n = d$, and also  establish a relative
version, when the base ring is a smooth affine  algebra over an
algebraically closed field.
\end{abstract}

\section{Introduction}

In Algebraic Topology the Excision Theorem is a useful theorem about
relative homology: {\it viz.} given topological  spaces $X$ and
subspaces $A$ and $U$ such that $U$ is also a subspace of A, the
theorem says that under certain circumstances, we can cut out (excise)
$U$ from both spaces such that the relative homologies of the pairs
$(X,A)$ and $(X \setminus U,A \setminus U)$ are isomorphic. Succinctly, 
Excision preserves homology; but it is known that it does not preserve 
homotopy. 

Excision assists in computation  of singular homology groups, as sometimes
after excising an appropriately chosen subspace we obtain something
easier to compute. Or, in many cases, it allows the use of
induction. Coupled with the long exact sequence in homology, one can
derive another useful tool for the computation of homology groups, the
Mayer–Vietoris sequence. In the axiomatic approach to  homology, the
theorem is the sixth of the Eilenberg-Steenrod axioms (See \cite{es}).

In Algebra the above features of Excision were first introduced and studied by
Milnor \cite{Milnor} in his book on Algebraic $K$-theory. In this context, 
Milnor introduced the double of a ring $R \times_I R$ of a ring w.r.t. an 
ideal $I$. 

Later R.G. Swan studied in \cite{sw} whether Excision helped in
computing the  lower $K$-groups $K_1$, $K_2$; and showed that it
failed. 

The problem of characterizing the rings for which Excision holds was
very important from the very beginning of the development of algebraic
$K$-theory because of its relations to the Karoubi conjecture (on the equality 
of algebraic and topological  $K$-theory groups of stable
$C^{\ast}$-algebras), homology of congruence subgroups and other questions. In 
1992 Suslin and M. Wodzicki in \cite{sw1} solved the problem for rational 
algebraic $K$-theory. (Also see \cite{sw2}).

But prior to that, there are  two instances in Classical Algebraic 
$K$-theory where Suslin
uses Excision for the linear group--refer  (\cite{Mur-Gup}, 
Lemma $4.3$); and the  orthogonal group--refer (\cite{sk}, Corollary
$2.13$) (where they  prove that the relative orthogonal group
$\EO_{2r}(R, I)$ is a normal  subgroup of the orthogonal group
$\O_{2r}(R, I)$), and also to (\cite{sk},  Lemma $2.14$) where it is shown
that one can deduce the injective  stability for the relative
orthogonal quotients to $\K_1O(R, I)$ if one  knows it for the ring and
the double of the ring $R \times_I R$ w.r.t. the ideal $I$.

In \cite{vdK1} W. van der Kallen defined a group structure for the
orbits of unimodular rows of length $d + 1$, where $d$ was the
dimension  of the base ring, and studied the Excision property for
orbit  spaces $\MSE_n(R, I)$ of unimodular rows of length 
$n\geq 3$ modulo elementary action. (See  Theorem \ref{excision}). Later in 
\cite{vdK2} he showed that these  orbit spaces also have a group structure when
the size is a bit beyond half  the dimension (the so-called Borsuk
estimate). 

In \S 3 we deduce a Double Excision theorem, which is  a consequence
of his theorem; but simplifies its usage. Using this we  deduce in
Theorem  \ref{nice} that if $I$ is an ideal in a ring $R$, and the
orbit spaces  $\MSE_n(R)$, and $\MSE_n(R, I)$ have the usual group
structures (see  \cite{vdK1, vdK2}), then the group structure on
$\MSE_n(R, I)$ is nice  (i.e. is Mennicke-like) if it is nice for the
Excision ring  $\MSE_n(R \oplus I)$. (We call this relative niceness
criterion). (It would be interesting to know the appropriate analogue of the 
Double Excision theorem in Algebraic Topology.)

In \cite{Garge-Rao} group structure on the orbit space was shown to be
nice in the following cases:

\begin{itemize}
\item[{1.}] Let $A$ be an affine algebra (of dimension $d \geq 2$)
over a perfect field $k$, where ${\rm char}~k \neq 2$ and  the
cohomological dimension ${{\rm c.d.}_2}~k \leq 1$.  Then the group
structure on the orbit space  $\MSE_{d +1}(A)$ is
nice. 

\item[{2.}] Let $R$ be a commutative noetherian local ring  of
dimension $d \geq 3$, in which $2R = R$. Then the group structure on
$\MSE_{d +1}(R[X])$ is nice. 
\end{itemize}

We deduce from Double Excision that a relative version of the above
results also hold.   The key new observation here is Lemma \ref{local}
which asserts that if $R$ is a local ring then the Excision ring  $R
\oplus I$ is also a local ring.

We then begin the study of niceness for the orbit spaces $\Um_d(A, I)$,
when  $A$ is an affine algebra of dimension $d$ over an algebraically 
closed field $k$. The key new input which allows us to study this case
is the beautiful theorem of J. Fasel in (\cite{Fasel1}, Lemma 3.3) that 
for a smooth affine surface over an algebraically closed field of 
characteristic  $\neq 2, 3$, a stably elementary $2 \times 2$ 
matrix is stably elementary symplectic. Another useful observation used 
is in \cite{fsr2} which asserts that if $A$ is an affine threefold over an 
algebraically closed field then $\Um_4(A, (a)) = e_1\Sp_4(A, (a))$, for 
$a \in A$. 

If $A$ is smooth,  then we prove that the group  structure on
$\Um_d(A)/\E_{d}(A)$ is nice, when $k$ is algebraically closed,  and
 of characteristic different from  $2, 3$. 

Since $A \oplus I$ need not be smooth, even if $A$ is smooth, we are
unable to apply
the relative niceness criterion here. However, we are able  to circumvent
this, and deduce the relative version, under the above  assumptions on
the algebra $A$, and the assumption that $I$ is a principal ideal.

One of the interesting by-products of this paper is to get a relative
Mennicke-Newman Lemma (see (\cite{vdK3}, Lemma $3.2$) for the absolute
case,  and (\cite{vdK1}, Lemma $3.4$) for the relative case when
dealing with rows  of length $d + 1$, where $d$ is the dimension of
the base ring for the known cases earlier due to W.van der Kallen). It 
is  the use of this version of the Mennicke--Newman
lemma which permits us to  study the concept of niceness for rows of
smaller length and also to  realize that the concept of niceness does
not depend on `which coordinate' in  the relative case. 

In all the cases we have shown the niceness of $\MSE_n(A, I)$ it is 
known that the stably free projective $A$-modules of rank $n -1$ are 
free -- see \cite{Sus1}, \cite{fsr1}, \cite{fsr2} \cite{bq3}; in fact, some 
essential ingredient in proving the freeness has 
been used by us to prove the niceness. In  (\cite{Fasel1}, Theorem 2.1),  
J. Fasel has shown that when  $A$ is a smooth affine 
algebra of dimension $d \geq 3$ over a perfect field $k$ with c.d.$_2(k) 
\leq 2$ then $\WMS_{d+1}(A) = \MS_{d+1}(A)$; but by van der Kallen's 
theorem in \cite{vdK2} $\MSE_{d+1}(A) = \WMS_{d+1}(A)$; whence $\MSE_{d+1}(A)$ 
is nice. The result catches our attention as it is not known whether 
stably free projective $A$-modules of rank $d$ are free for such affine 
algebras $A$ when c.d.$_2k = 2$. We shall say more about this example in 
a sequel article.

\section{Preliminaries} 

Throughout this note, $R$ stands for a commutative ring with unity,
for $n \geq 1$, $\M_n(R)$ the set of all $n \times n$ matrices over
$R$ and  $\GL_n(R)$ the group of invertible $n \times n$ matrices over
$R$.  A row $v = (a_1, a_2, \ldots, a_n) \in R^{n}$ is said to be
unimodular of length $n$, if there is a row $w = (b_1, b_2, \ldots,
b_n) \in R^{n}$ such that  $\langle v, w \rangle := v \cdot w^{t} =
1$, where $w^{t}$ stands for the transpose of $w$. The set of all
unimodular rows of length $n$ over $R$ will be denoted by $\Um_n(R)$.
Given an ideal $I$ of a ring $R$, let $\Um_n(R, I)$ denote the subset
of $\Um_n(R)$ consisting of  unimodular rows $v = (a_1, a_2, \ldots,
a_n)$ with $v \equiv (1, 0, \ldots, 0) \vpmod I$ i.e.,  $v$ is
unimodular and $(a_1 - 1), a_2, \ldots, a_n \in I$. It can be shown
that for any $v \in \Um_n(R, I)$ there exists $w \in \Um_n(R, I)$ such
that $v \cdot w^t = 1$.  Given $\lambda \in R$, for $i \neq j$, let
$E_{ij}(\lambda) = I_n + \lambda{e_{ij}}$, where $I_n$ denotes the
identity matrix and $e_{ij} \in \M_n(R)$ is the matrix  whose only
non-zero entry is $1$ at the $(i,j)$-th position.  Such
$E_{ij}(\lambda)$'s are called elementary matrices.  The subgroup of
$\GL_n(R)$ generated by  $E_{ij}(\lambda), i \not= j, \lambda \in R$ is called the elementary
subgroup of $\GL_n(R)$ and will be denoted  by $\E_n(R)$. Similarly we define $\E_n(I)$ for any ideal $I$ in $R$. We now
recall the definition of the relative elementary group: 

\begin{definition} Let $I$ be an ideal of $R$. Then $\E_n(R, I)$ is
defined to be the smallest  normal subgroup of $\E_n(R)$ containing
the element $E_{21}(x), x \in I$. 
\end{definition}

For $n \geq 3$, the relative elementary group $\E_n(R, I)$ acts on the
set of relative unimodular  rows $\Um_{n}(R, I)$ and the orbit space
of relative unimodular rows under relative elementary action is
denoted by $\Um_n(R,I)/\E_n(R, I)$. We shall also use $\MSE_n(R, I)$
to denote the orbit space $\Um_n(R,I)/\E_n(R, I)$, following
\cite{vdK2}. (When $I = R$, this is the orbit space
$\Um_n(R)/\E_n(R)$.)  Following is due to H.Bass.

\begin{definition}\label{s range condition}{$($Stable range condition
$Sr_n(I))$} Let $I$ be an ideal in $R$. We shall say stable range
condition $Sr_n(I)$ holds for $I$ if for any $(a_1, a_2, \ldots,
a_{n+1})$ in $\Um_{n+1}(R, I)$ there exists $c_i$ in $I$ such that
$(a_1+c_1a_{n+1}, a_2 + c_2a_{n+1}, \ldots, a_n + c_na_{n+1}) \in
\Um_{n}(R, I)$.
\end{definition}

    We recall the following argument of Vaserstein (see \cite{Vas6})
for an ideal $I$ in $R$. Assume $Sr_n(I)$ holds for $I$. Let $(a_1,
a_2, \ldots, a_{n+2}) \in \Um_{n+2}(R, I)$. Then there exists $(b_1,
b_2, \ldots , b_{n+2}) \in Um_{n +2}(R, I)$  such that  $\Sigma_{i =
1}^{i = n + 2}a_ib_i = 1$. So $( a_1, a_2, \ldots, a_n ,
a_{n+1}b_{n+1} + a_{n+2}b_{n+2}) \in \Um_n(R, I)$. Now by the
condition $Sr_n(I)$ on $I$ we have $c_i\text{'s} \in I$ such that
$(a_i + c_i\{a_{n+1}b_{n+1} + a_{n+2}b_{n+2}\}) \in \Um_{n}(R, I)$. In
particular $(a_1 + c_1\{a_{n+1}b_{n+1} + a_{n+2}b_{n+2}\}, a_2 +
c_2\{a_{n+1}b_{n+1} + a_{n+2}b_{n+2}\}, \ldots , a_n +
c_n\{a_{n+1}b_{n+1} + a_{n+2}b_{n+2}\}, a_{n+1}) \in \Um_{n+1}(R,
I)$. Subtracting suitable multiples of $a_{n+1}$ from first $n$
coordinates we have $(a_1 + c_1 a_{n+2}b_{n+2}, a_2 +
c_2a_{n+2}b_{n+2}, \ldots , a_n + c_na_{n+2}b_{n+2}, a_{n+1}) \in
\Um_{n+1}(R, I)$. Therefore the condition $Sr_n(I)$ implies the
condition $Sr_{n+1}(I)$.

\begin{definition}\label{s range, s dim}{\rm (Stable range $Sr(I)$,
Stable dimension $Sd(I)$)} We shall define the stable range of $I$
denoted by $Sr(I)$ to be the least integer $n$ such that $Sr_n(I)$
holds for $I$. We shall define stable dimension of $I$ by  $Sd(I) =
Sr(I) - 1$.
\end{definition}

Following is proved in \cite{Vas6}.

\begin{lemma}\label{vaslemma}{\text{$(${\rm Vaserstein}$)$}} Let $I, J$ be
two ideals in $R$ such that $I \subset J$. Then the following are
true.
\begin{enumerate}[(a)]
\item $Sr(I) \leq Sr(J)$.
\item $Sr(J/I) \leq Sr(J)$.
\end{enumerate} In particular we have $Sr(I) \leq Sr(R)$ and $Sr(R/I)
\leq Sr(R)$. The above assertions are also true for stable dimension.
\end{lemma}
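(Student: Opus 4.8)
The plan is to prove both inequalities at the level of the stable range conditions $Sr_n$, and then read off the statements for $Sr$ and $Sd$. Recall that $Sr(I)$ is the least $n$ for which $Sr_n(I)$ holds, and that the argument of Vaserstein recalled above yields $Sr_n(\cdot)\Rightarrow Sr_{n+1}(\cdot)$; so it suffices to establish, for every $n$, the implications
\[Sr_n(J)\Rightarrow Sr_n(I)\qquad\text{and}\qquad Sr_n(J)\Rightarrow Sr_n(J/I).\]
Evaluating the first at $n=Sr(J)$ will give $Sr(I)\le Sr(J)$ and the second will give $Sr(J/I)\le Sr(J)$; the two ``in particular'' assertions are the case $J=R$, and the statements for stable dimension follow from $Sd=Sr-1$.

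For (a) I would take a row $v=(a_1,\dots,a_{n+1})\in\Um_{n+1}(R,I)$; since $I\subseteq J$, congruence mod $I$ forces congruence mod $J$, so $v\in\Um_{n+1}(R,J)$ as well. I would then choose a witness $w=(b_1,\dots,b_{n+1})\in\Um_{n+1}(R,I)$ with $\langle v,w\rangle=1$ (available by the remark in \S2), so that in particular $b_{n+1}\in I$. The key move is to feed $Sr_n(J)$ not $v$ itself but the row $v'=(a_1,\dots,a_n,a_{n+1}b_{n+1})$, which is unimodular with witness $(b_1,\dots,b_n,1)$ and lies in $\Um_{n+1}(R,J)$; this produces $h_1,\dots,h_n\in J$ making $(a_1+h_1a_{n+1}b_{n+1},\dots,a_n+h_na_{n+1}b_{n+1})$ unimodular. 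Putting $c_i:=h_ib_{n+1}$ we get $c_i\in I$ (because $b_{n+1}\in I$), while $a_i+c_ia_{n+1}=a_i+h_ia_{n+1}b_{n+1}$; thus the reduced row is unimodular and, as each $c_ia_{n+1}\in I$, congruent to $(1,0,\dots,0)$ mod $I$, i.e. lies in $\Um_n(R,I)$. That is exactly $Sr_n(I)$.

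For (b) I would start with $\bar v=(\bar a_1,\dots,\bar a_{n+1})\in\Um_{n+1}(R/I,J/I)$ and first manufacture a unimodular lift of it inside $\Um_{n+1}(R,J)$. Any coordinatewise lift $v=(a_1,\dots,a_{n+1})$ already satisfies the required $J$-congruences; lifting a witness of $\bar v$ to $b=(b_1,\dots,b_{n+1})$ gives $\langle v,b\rangle=1-x$ with $x\in I$, whence $(a_1,\dots,a_{n+1},x)\in\Um_{n+2}(R,J)$. Applying $Sr_{n+1}(J)$ (which holds by stabilization) to clear the last entry $x$, and using $x\in I$, I obtain a row $v'\in\Um_{n+1}(R,J)$ with $v'\equiv\bar v\pmod I$. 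Now $Sr_n(J)$ applied to $v'$ gives $c_i\in J$ reducing it within $\Um_n(R,J)$; reducing the entire configuration mod $I$, the classes $\bar c_i\in J/I$ reduce $\bar v$ within $\Um_n(R/I,J/I)$, which is $Sr_n(J/I)$.

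The one real obstacle in both parts is ideal bookkeeping: $Sr_n(J)$ only ever returns reduction coefficients in $J$, whereas $Sr_n(I)$ and $Sr_n(J/I)$ demand them in $I$ and in $J/I$. A priori this looks like it could fail — already for $n=1$ one cannot simply reuse the $J$-coefficients, since $1+(\text{element of }I)$ need not be a unit — so the content lies precisely in forcing the coefficients down. In (a) the weighting trick does this: replacing $a_{n+1}$ by $a_{n+1}b_{n+1}$ lets the witness entry $b_{n+1}\in I$ drag $c_i=h_ib_{n+1}$ into $I$ regardless of $h_i\in J$. In (b) it is the lifting step that carries the burden, and I expect that to be the most delicate point to write out, since it is where the hypothesis must be used in its stabilized form $Sr_{n+1}(J)$ to produce a unimodular lift without leaving $\Um_{n+1}(R,J)$.
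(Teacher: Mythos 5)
Your proof is correct. The paper gives no proof of this lemma at all (it simply cites Vaserstein), so there is nothing to compare against; but your argument is the standard one, and the key device in (a) --- feeding $Sr_n(J)$ the row $(a_1,\dots,a_n,a_{n+1}b_{n+1})$ with $b_{n+1}\in I$ so that the returned $J$-coefficients get multiplied into $I$ --- is precisely the weighting trick the paper itself recalls when deriving $Sr_n(I)\Rightarrow Sr_{n+1}(I)$, and your lifting step in (b) via the stabilized hypothesis $Sr_{n+1}(J)$ is sound.
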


\begin{definition}\label{ering}$(${\rm Excision Ring}$)$
Let $R$ be a ring and $I$ an ideal in $R$. The Excision ring $\ZZ
\oplus I$, has coordinate-wise addition and  multiplication given by:
$(m, i) \cdot (n, j) = (mn, mj+ni+ij)$. The additive identity of this
ring is $(0, 0)$ and the  multiplicative identity is $(1, 0)$. We have
a ring homomorphism $f : \ZZ \oplus I \rightarrow R$ defined by $f(n,
i) = n + i$ which will induce a map $\mathfrak{f}:\Um_n(\ZZ \oplus I, 0 \oplus I ) \longrightarrow  \Um(R \oplus I, 0 \oplus I )$ defined by $(a_i) \mapsto (f(a_i))$.
\end{definition}
We recall Excision theorem (see \cite{vdK1}, Theorem
$3.21$): 

\begin{theorem}\label{excision}{\rm (Excision theorem)} Let $n \geq 3$
be an integer and $I$ be an ideal in a commutative ring $R$. Then the
natural maps  $F : \MSE_n(\ZZ \oplus I, 0 \oplus I) \rightarrow \MSE_n(R,
I)$ {\rm defined by} $[(a_i)] \mapsto [(f(a_i))]$ {\rm and}  $G : \MSE_n(\ZZ
\oplus I, 0 \oplus I) \rightarrow \MSE_n(\ZZ \oplus I)$ {\rm defined by} $ [(a_i)] \mapsto [(a_i)]$ are
bijections. 
\end{theorem}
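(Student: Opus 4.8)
The plan is to reduce everything to a single excision isomorphism at the level of the ideal and then transport it first to unimodular rows and then to their elementary orbits. The conceptual heart is that $f:\ZZ\oplus I\to R$ restricts to a bijection $0\oplus I\to I$ which respects the non-unital multiplication, since $(0,i)\cdot(0,j)=(0,ij)$ in $\ZZ\oplus I$. Thus $(\ZZ\oplus I,\,0\oplus I)$ is a universal model for the ideal $I$, independent of the ambient ring. I expect the substance of the theorem to be that a relative unimodular row, a relative inverse for it, and any relative elementary transformation between two such rows all record data that lives entirely inside $I$, so that this one ideal isomorphism controls all three orbit spaces.

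For the map $F$ I would first treat surjectivity by explicit lifting. Given $v=(1+i_1,i_2,\dots,i_n)\in\Um_n(R,I)$ with relative inverse $w=(1+j_1,j_2,\dots,j_n)\in\Um_n(R,I)$, the identity $\langle v,w\rangle=1$, after subtracting $1$, becomes a relation $(i_1+j_1+i_1j_1)+\sum_{k\ge 2}i_kj_k=0$ lying wholly in $I$. Transporting this relation through $0\oplus I\cong I$ shows that $\tilde v=((1,i_1),(0,i_2),\dots,(0,i_n))$ and $\tilde w=((1,j_1),(0,j_2),\dots,(0,j_n))$ satisfy $\langle\tilde v,\tilde w\rangle=(1,0)$ in $\ZZ\oplus I$; hence $\tilde v\in\Um_n(\ZZ\oplus I,0\oplus I)$ lifts $v$ and $F$ is onto. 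The hard part will be injectivity: if $f(\tilde v)$ and $f(\tilde w)$ lie in one $\E_n(R,I)$-orbit, I must produce a relative elementary path over $\ZZ\oplus I$ joining $\tilde v$ to $\tilde w$. The difficulty is that $\E_n(R,I)$ is the normal closure in $\E_n(R)$ of the $E_{21}(x)$, $x\in I$, so a witnessing product involves conjugators from $\E_n(R)$ that need not lift to $\ZZ\oplus I$. To overcome this I would invoke a relative Mennicke--Newman type normal form (cf. the relative lemma recorded in the introduction) to rewrite the equivalence using only transformations whose net effect keeps all moving data inside $I$, and then pull these back along the ideal isomorphism.

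For the map $G$ the point is that $(\ZZ\oplus I)/(0\oplus I)\cong\ZZ$ admits a ring section $s$, and $\dim\ZZ=1$ forces $\Um_n(\ZZ)=e_1\E_n(\ZZ)$ for $n\ge 3$ by Bass's stable range estimate, so $\MSE_n(\ZZ)$ is a single point. Surjectivity of $G$ follows: any $u\in\Um_n(\ZZ\oplus I)$ reduces mod $0\oplus I$ to $e_1\bar\epsilon$, and lifting $\bar\epsilon\in\E_n(\ZZ)$ through the induced section $s(\E_n(\ZZ))$ gives $\epsilon\in\E_n(\ZZ\oplus I)$ with $u\epsilon^{-1}\in\Um_n(\ZZ\oplus I,0\oplus I)$ and $[u]=G([u\epsilon^{-1}])$. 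For injectivity I would use the split structure $\E_n(\ZZ\oplus I)=\E_n(\ZZ\oplus I,0\oplus I)\cdot s(\E_n(\ZZ))$ coming from the splitting $\ZZ\oplus I=\ZZ\ltimes I$: writing an absolute equivalence $\tilde w=\tilde v\,\sigma_0\,s(\eta)$ with $\sigma_0$ relative and $\eta\in\E_n(\ZZ)$, the factor $\sigma_0$ moves $\tilde v$ within its relative orbit, and reduction mod $0\oplus I$ shows $e_1\eta=e_1$. The remaining task, again the crux, is to absorb $s(\eta)$ into $\E_n(\ZZ\oplus I,0\oplus I)$; here I would combine the triviality of $\MSE_n(\ZZ)$ with the same relative Mennicke--Newman normal form used for $F$.

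In both halves the single recurring obstacle is identical: passing from an absolute (or merely relative-downstairs) elementary equivalence to a genuinely relative one over the excision ring. Since the two bijections are built from compatible identifications, it suffices to establish them separately, and the relative Mennicke--Newman lemma is precisely the device that dissolves this obstruction.
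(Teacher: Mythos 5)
This theorem is not proved in the paper at all: it is quoted verbatim from van der Kallen (\cite{vdK1}, Theorem 3.21), so there is no internal proof to compare against. Judged on its own terms, your proposal gets the routine halves right but leaves the actual content of the theorem unproved, and the device you lean on to fill the hole is circular in the context of this paper.

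What works: surjectivity of $F$ by lifting $v=(1+i_1,i_2,\dots,i_n)$ together with a relative inverse $w$ (the identity $i_1+j_1+i_1j_1+\sum_{k\ge 2}i_kj_k=0$ uses only addition and multiplication inside $I$, hence transports through the non-unital isomorphism $0\oplus I\cong I$); surjectivity of $G$ from $\Um_n(\ZZ)=e_1\E_n(\ZZ)$; and the decomposition $\E_n(\ZZ\oplus I)=\E_n(\ZZ\oplus I,0\oplus I)\cdot s(\E_n(\ZZ))$, which does follow from Suslin's retract lemma (Lemma \ref{lem:august15}).

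The gap is injectivity, in both halves, and you have not closed it. For $F$ you must lift a witnessing matrix in $\E_n(R,I)$ --- a product of conjugates $\varepsilon E_{21}(x)\varepsilon^{-1}$ with $\varepsilon\in\E_n(R)$ whose entries live in $R$, not in $\ZZ+I$ --- to $\E_n(\ZZ\oplus I,0\oplus I)$; for $G$ you must absorb the stabilizer factor $s(\eta)$, $\eta\in\E_n(\ZZ)$ with $e_1\eta=e_1$, into the relative orbit. You defer both to ``a relative Mennicke--Newman normal form,'' but this cannot work here for two reasons. First, it is circular: the relative Mennicke--Newman lemma of this paper (Lemma \ref{rm-newman2}) is itself proved by passing to $\ZZ\oplus I$ via Theorem \ref{excision} and Remark \ref{vr}, so it is downstream of the statement you are trying to prove. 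Second, even granted, Mennicke--Newman only puts two rows into a common form differing in one coordinate under suitable stable-range hypotheses (which the Excision theorem does not assume --- it holds for an arbitrary commutative ring); it says nothing about whether the elementary matrix realizing an equivalence over $R$ lifts to the excision ring. The genuine mechanism in van der Kallen's proof is a structural analysis of how $\E_n(R,I)$ acts on rows congruent to $e_1$ modulo $I$ (via his description of $\E_n(R,I)$ through the groups $\E_n^k(R,I)$ and explicit generators $E_{ij}(a)E_{ji}(x)E_{ij}(-a)$), showing that the orbit space $\Um_n(R,I)/\E_n(R,I)$ depends only on the non-unital ring $I$. Without an argument of that kind, your proof establishes only that $F$ and $G$ are well-defined surjections.
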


\begin{definition}\label{vr}{{\rm(}Vaserstein, van der Kallen's rule
or $Vv$ rule{\rm)}} Let $v, w  \in Um_n(R)$, $n \geq 3$ be given by $ v
= (a, a_2, a_3, \ldots, a_{n})$, $w = (b, a_2, a_3, \ldots, a_{n})$. We
shall say that a group operation $\ast$ on $\MSE_n(R)$ is given by
Vaserstein, van der Kallen's rule  if it satisfies one of the
following equivalent conditions {\rm(}see remark following Theorem
$3.6$ in \cite{vdK1}{\rm)}.
\begin{itemize}
\item[1] Choose $p \in R$ such that  $ap \equiv 1\, \vpmod {(a_2, a_3,
\ldots a_n)}$. Then $$[w] \ast [v] = [(a(b+p) - 1, a_2(b+p), a_3, \ldots,
a_n)].$$

\item[2.]  Let $\alpha \in \M_2(R)$ such that $e_1\alpha = (a, a_2)$
and $ \overline{\alpha} \in \GL_2(\overline{R})$;  $\overline{R}=
R/(a_3, a_4, \ldots, a_n)$. Then $$[w] \ast [v] = [((b, a_2) \alpha , a_3,
\ldots, a_n)].$$
\end{itemize} A group operation $\ast$ on $\MSE_n(R, I)$ is said to
be given by Vaserstein, van der Kallen's rule if the induced operation
on $\MSE_n(\ZZ \oplus I)$ by $F, G$  (see Theorem \ref{excision}) follows
Vaserstein, van der Kallen's rule in the previous sense. We shall
abbreviate it as $Vv$ rule.

\end{definition}

\begin{definition}\label{weak symbol}{{\rm (Universal weak Mennicke
symbol $\WMS_n(R), n \geq 2$)}\,\rm ({\it cf.} \cite{vdK2}, Section $3$)} We
define the universal weak Mennicke symbol on $\MSE_{n}(R)$ by a set
map $\wms : \MSE_{n}(R) \longrightarrow \WMS_n(R)$, $[v] \longmapsto
\wms(v)$ to a group $\WMS_n(R)$. The group $\WMS_n(R)$ is the free
group generated by $\wms(v), v \in \Um_n(R)$ modulo the following
relations

\begin{enumerate}
\item $\wms(v) = \wms(vg)$ if $g \in \E_n(R)$.
\item If $(q,v_2, \ldots, v_n), (1+q,v_2, \ldots, v_n) \in \Um_n(R)$
and $r(1+q) = q \vpmod {(v_2, \ldots, v_n)}$, then  \\$\wms(q, v_2,
\ldots, v_n) = \wms(r, v_2, \ldots, v_n)\wms(1 + q, v_2, \ldots,
v_n)$.
\end{enumerate}
\end{definition}

In (\cite{vdK2}, Lemma $3.5$) van der Kallen has shown that if $[v], [w]
\in \MSE_{n}(R), n \geq 3, v = ( a, a_2,a_3 \ldots, a_{n})$, $w = ( b, a_2,a_3
\ldots, a_{n})$ and $p \in R$ such that $ap \equiv 1 \ \vpmod {(a_2,
a_3, \ldots a_n)}$, then 
$$\wms(w) \wms(v) = \wms((a(b+p) - 1, a_2(b+p), a_3, \ldots, a_n)).$$
We recall  (\cite{vdK2}, Theorem $4.1$).

\begin{theorem}\label{wms}{\rm(W. van der Kallen)} Let $R$ be a ring of stable dimension $d$,
$d \leq 2n -4$ and $n \geq 3$. Then the universal weak Mennicke symbol
$\wms : \MSE_{n}(R) \longrightarrow \WMS_n(R)$ is bijective and
$\WMS_n(R)$ has the structure of an abelian group given by $Vv$ rule.
\end{theorem}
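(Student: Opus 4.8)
The plan is to build an abelian group structure on the set $\MSE_n(R)$ directly out of the $Vv$ rule, exhibit the tautological quotient map $q\colon\Um_n(R)\to\MSE_n(R)$ as a weak Mennicke symbol, and then use the universal property defining $\WMS_n(R)$ to identify the two. First I would show that the $Vv$ rule of Definition \ref{vr} descends to a well-defined binary operation $\ast$ on $\MSE_n(R)$. The rule is phrased only for rows $v=(a,a_2,\ldots,a_n)$ and $w=(b,a_2,\ldots,a_n)$ sharing their last $n-1$ coordinates, so the first task is to show that any two classes $[v],[w]$ admit representatives of this common-tail shape. This is precisely where the hypothesis enters: $Sd(R)=d$ means $Sr(R)=d+1$, and the bound $d\le 2n-4$, equivalently $n\ge \tfrac{d}{2}+2$ (the Borsuk range), is exactly what makes a Mennicke--Newman type reduction available, bringing arbitrary representatives into common-tail position after $\E_n(R)$-action. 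I would then verify that the class of the $Vv$ product $(a(b+p)-1,a_2(b+p),a_3,\ldots,a_n)$ is independent of the auxiliary $p$ with $ap\equiv1\pmod{(a_2,\ldots,a_n)}$ and of the choice of common-tail representatives; both independencies follow once one uses that the two formulations in Definition \ref{vr} agree and that the Mennicke--Newman reduction is unique up to $\E_n(R)$.

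Next I would check the group axioms for $(\MSE_n(R),\ast)$. The class $[e_1]=[(1,0,\ldots,0)]$ is a two-sided identity (take the first coordinate of one factor to be $1$), inverses exist, and commutativity is immediate from the symmetry built into the weak Mennicke relation together with the van der Kallen formula recalled before the statement. Associativity is the one genuinely computational axiom; I would obtain it either by a direct manipulation of the $Vv$ formula in common-tail position or, more cleanly, by transporting the question into $\WMS_n(R)$ once the homomorphism below is in hand, where associativity is automatic.

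The bridge to $\WMS_n(R)$ is the formula $\wms(w)\wms(v)=\wms\bigl((a(b+p)-1,a_2(b+p),a_3,\ldots,a_n)\bigr)$ of (\cite{vdK2}, Lemma $3.5$). By construction its right-hand side is $\wms([w]\ast[v])$, so $\wms\colon(\MSE_n(R),\ast)\to\WMS_n(R)$ is a group homomorphism. Its image is a subgroup containing every generator $\wms(v)=\wms([v])$, hence $\wms$ is surjective; in particular every element of $\WMS_n(R)$ is a single symbol, not merely a word in symbols.

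Finally, for injectivity I would construct a one-sided inverse by universality. The map $q\colon\Um_n(R)\to(\MSE_n(R),\ast)$ satisfies relation $(1)$ of Definition \ref{weak symbol} tautologically, and relation $(2)$, the weak Mennicke relation, is a single instance of the $Vv$ rule: applying it to $w=(r,v_2,\ldots,v_n)$ and $v=(1+q,v_2,\ldots,v_n)$ and using $r(1+q)\equiv q\pmod{(v_2,\ldots,v_n)}$ reduces the product to $[(q,v_2,\ldots,v_n)]$. Thus $q$ is a weak Mennicke symbol, and the universal property yields a homomorphism $\psi\colon\WMS_n(R)\to\MSE_n(R)$ with $\psi(\wms(v))=[v]$, whence $\psi\circ\wms=\mathrm{id}_{\MSE_n(R)}$ and $\wms$ is injective, hence bijective; transporting $\ast$ identifies $\WMS_n(R)$ as the abelian group $(\MSE_n(R),\ast)$ with multiplication given by the $Vv$ rule. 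The main obstacle is the first paragraph: showing the $Vv$ rule is well defined on classes. The delicate points there are the Mennicke--Newman reduction, which is what consumes the stable-dimension hypothesis $d\le 2n-4$, and the independence of the product from the choices made; everything afterwards is formal bookkeeping against Lemma $3.5$ and the universal property.
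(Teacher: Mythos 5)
You should first note that the paper does not prove this statement at all: it is recalled verbatim as (\cite{vdK2}, Theorem 4.1) of van der Kallen, so there is no internal proof to compare against. Your overall architecture --- equip $\MSE_n(R)$ with a product via the $Vv$ rule, observe that the quotient map $\Um_n(R)\to\MSE_n(R)$ then satisfies the two defining relations of a weak Mennicke symbol, and split $\wms$ by the universal property --- is the standard skeleton for results of this kind. But as written the argument has two genuine gaps.

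The decisive one is in your first paragraph. The independence of $[(a(b+p)-1,\,a_2(b+p),\,a_3,\ldots,a_n)]$ from the choice of $p$ and, much more seriously, from the choice of common-tail representatives of $[v]$ and $[w]$, is the entire content of van der Kallen's theorem; it is what most of \cite{vdK1} and \cite{vdK2} is devoted to, and it does not follow from Mennicke--Newman. Your justification --- that ``the Mennicke--Newman reduction is unique up to $\E_n(R)$'' --- is not a theorem: two different reductions of the pair $([v],[w])$ to common-tail position need not be related by a single elementary matrix acting simultaneously on both rows, and the assertion that the resulting products coincide is precisely the well-definedness being proved, so the appeal is circular. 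The second gap is also a circularity: you propose to deduce associativity of $\ast$ by transporting it to $\WMS_n(R)$ ``once the homomorphism below is in hand,'' but the universal property of $\WMS_n(R)$ (a quotient of a free group) only yields the splitting $\psi$ after the target $(\MSE_n(R),\ast)$ is known to be a group, so associativity cannot be deferred in this way; likewise commutativity is not ``immediate,'' since $p$ is chosen relative to $a$ and the $Vv$ formula is visibly asymmetric in $a$ and $b$. Van der Kallen's actual route sidesteps constructing the group law on the orbit set ab initio: $\WMS_n(R)$ is a group by construction, (\cite{vdK2}, Lemma 3.5) together with a Mennicke--Newman reduction gives surjectivity of $\wms$ and the $Vv$ form of a product of two symbols, and the substantive work is the injectivity of $\wms$; the group structure on $\MSE_n(R)$, and its $Vv$ description, are then obtained by transport across the bijection. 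If you want to complete your version, the piece you must actually supply is the well-definedness (and associativity) of $\ast$ on orbit classes, and that cannot be done by the bookkeeping you describe.
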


\begin{remark}\label{vr} Let $I$ be an ideal in a ring $R$,  with
$max(R)$ a disjoint union of $V(I)$ and finitely many subsets $V_i$ each a noetherian topological space of dimension at most $d$. Then the maximal spectrum of $(\ZZ\oplus I)$ is the union of finitely many subspaces of dimension atmost $d$ whenever $d \geq 2$.  
Therefore $\ZZ \oplus I$ has stable dimension at most $d$ for $d \geq 2$ 
{\rm(see \cite{vdK1}, 3.19)}. So by the above Theorem $\MSE_n(\ZZ
\oplus I)$ has a group structure given by $Vv$ rule whenever  $n \geq
\text{max} \{3, 2 + \frac{d}{2}\}$. Equivalently $\MSE_n(R, I)$ has a
group structure given by $Vv$ rule whenever $n \geq  2
+ \frac{d}{2}, d \geq 2$. (The group structure on $MSE_n(R, I)$ is inherited 
from the one on the subgroup $MSE_n(\ZZ \oplus I, 0 \oplus I)$ of 
$MSE_n(\ZZ \oplus I)$).
\end{remark}

\begin{definition} We say that the group structure $\MSE_{n}(R)$, $n
\geq 3$, given by $Vv$ rule  is  {\bf nice}, if it is given by the
`coordinate-wise  multiplication' formula: 
$$[(b, a_2,  \ldots, a_{n})] \ast [(a, a_2, \ldots, a_{n})] = [(ab, a_2, \ldots, a_{n})].$$ 
A group structure on $\MSE_{n}(R, I)$ given by $Vv$ rule is said to be
{\bf nice} if it satisfies one of the following equivalent conditions
.

\begin{itemize}
\item[(1)]  $[(b, a_2,  \ldots, a_{n})]\ast [(a, a_2, \ldots, a_{n})] =
[(ab, a_2, \ldots, a_{n})]$.

\item[(2)]  $[(a_1,  \ldots, a_{n-1}, b)]\ast [(a_1, \ldots, a_{n-1}, a)] =
[(a_1, \ldots, a_{n-1}, ab)]$.

\item[(3)]  $[(a_1, \ldots, a_{i-1}, b, a_{i+1}, \ldots,
a_{n})]\ast [(a_1, \ldots, a_{i-1}, a, a_{i+1}, \ldots,  a_{n})] = [(a_1,
\ldots, a_{i-1}, ab, a_{i+1}, \ldots,  a_{n})]$ for $i = 2, 3, \ldots,
n$.

\end{itemize}
\end{definition}

\noindent {\it Proof of the equivalence:}\\ 
Given $ i \in I$ we  use $\tilde{i}$ to denote $(0, i) \in \ZZ \oplus I$. If $x = n + i, n \in \ZZ, i \in I$ then we define $\tilde{x} = (n, i) \in \ZZ \oplus I$ to be a preimage of $x$ under $f$ (see Definition \ref{ering}).
Equivalence of (2) and (3)
is obvious. So assume that (3) holds.  Let $(p, b_2, b_3, \ldots, b_n) \in
\Um_n(R, I)$ be such that $ap + \Sigma_{i =2}^na_ib_i = 1$. Now in
$\MSE_n(R, I)$ we have
\begin{align*} &[(b, a_2,  \ldots, a_{n})]\ast [(a, a_2, \ldots, a_{n})]
\\ & = [(a(b+p) - 1, a_2(b+p), a_3, \ldots, a_n)] \\ &= [(a(b+p) - 1,
a_2\lambda(b+p), a_3, \ldots, a_n)];\, \text{ assume $a(b+p) - 1 = 1 -
\lambda, \lambda \in I$}\\ &=[(a(b+p) - 1, a_2, a_3, \ldots,
a_n)] \ast [(a(b+p) - 1, \lambda(b+p), a_3, \ldots, a_n)]\\
&=[(a(b+p) - 1, a_2, a_3, \ldots, a_n)]\ast  [e_1]\\ 
&= [(ab, a_2, \ldots, a_{n})].
\end{align*}

Note that  $(\tilde{a}(\tilde{b}+\tilde{p}) - 1, \tilde{\lambda}(\tilde{b}+\tilde{p}), \tilde{a_3},
\ldots, \tilde{a_n})$ in $\Um_n(\ZZ \oplus I, 0 \oplus I)$ can be completed to a matrix in $\E_n(\ZZ
\oplus I)$ and therefore to a matrix in $\E_n(\ZZ \oplus I, 0 \oplus I)$ by Excision theorem \ref{excision}. So its image $(a(b+p) - 1, \lambda(b+p), a_3, \ldots, a_n) \in \Um_n(R, I)$ under $f$ can be completed to a matrix in $\E_n(R, I)$. This proves (1).

Now we assume that (1) holds. Then  $[(\tilde{a_1},  \ldots, \tilde{a_{n-1}},
\tilde{b})], [(\tilde{a_1}, \ldots, \tilde{a_{n-1}}, \tilde{a})] \in
\MSE_n( \ZZ \oplus I, 0 \oplus I) $ correspond to
$[(a_1,  \ldots, a_{n-1}, b)], [(a_1, \ldots, a_{n-1}, a)]$ by $F$
(see  Theorem  \ref{excision}) respectively. We choose  $(\tilde{b_1},
\tilde{b_2}, \ldots, \tilde {b_{n-1}}, \tilde{p}) \in \Um_n(\ZZ \oplus
I, 0 \oplus I)$ such that $\Sigma_{i=1}^{n-1}\tilde {a_i}\tilde{b_i} +
\tilde{a}\tilde{p} = 1$. Since the induced operation on $\MSE_n(\ZZ
\oplus I)$ satisfies $Vv$ rule we have 
\begin{eqnarray*} &&[(\tilde{a_1},  \ldots, \tilde{a_{n-1}},
\tilde{b})]\ast [(\tilde{a_1}, \ldots, \tilde{a_{n-1}}, \tilde{a})] \\
&&=[(\tilde{a_1}(\tilde{b}+\tilde{p}), \tilde{a_2}, \ldots,
\tilde{a_{n-1}}, \tilde{a}(\tilde{b}+\tilde{p}) -1)]\\
&&=[(\tilde{a_1}(\tilde{b}+\tilde{p} +
1-\tilde{a}(\tilde{b}+\tilde{p})), \tilde{a_2}, \ldots,
\tilde{a_{n-1}}, \tilde{a}(\tilde{b}+\tilde{p}) -1)]\\
&&=[(\tilde{a_1}(\tilde{b}+\tilde{p} +
1-\tilde{a}(\tilde{b}+\tilde{p})), \tilde{a_2}, \ldots,
\tilde{a_{n-1}}, \tilde{a}(\tilde{b}+\tilde{p}) -1 +
\tilde{a_1}(\tilde{b}+\tilde{p} + 1-\tilde{a}(\tilde{b}+\tilde{p})))]
\ldots \ldots(a).
\end{eqnarray*} Now both $(\tilde{a_1}, \tilde{a_2}, \ldots,
\tilde{a_{n-1}}, \tilde{a}(\tilde{b}+\tilde{p}) -1 +
\tilde{a_1}(\tilde{b}+\tilde{p} + 1-\tilde{a}(\tilde{b}+\tilde{p})))$
and $(\tilde{b}+\tilde{p} + 1-\tilde{a}(\tilde{b}+\tilde{p}),
\tilde{a_2}, \ldots, \tilde{a_{n-1}}, \tilde{a}(\tilde{b}+\tilde{p})
-1 + \tilde{a_1}(\tilde{b}+\tilde{p} +
1-\tilde{a}(\tilde{b}+\tilde{p})))$ are in $\MSE_n(\ZZ \oplus I,
0 \oplus I)$. Then by our given hypothesis $(1)$ equation  $(a)$ equals
\begin{eqnarray*} &&[(\tilde{a_1}, \tilde{a_2}, \ldots,
\tilde{a_{n-1}}, \tilde{a}(\tilde{b}+\tilde{p}) -1 +
\tilde{a_1}(\tilde{b}+\tilde{p} +
1-\tilde{a}(\tilde{b}+\tilde{p})))]\ast [(\tilde{b}+\tilde{p} +
1-\tilde{a}(\tilde{b}+\tilde{p}), \tilde{a_2}, \ldots,
\tilde{a_{n-1}}, \\ &&\tilde{a}(\tilde{b}+\tilde{p}) -1 +
\tilde{a_1}(\tilde{b}+\tilde{p} +
1-\tilde{a}(\tilde{b}+\tilde{p})))]\\ &&=[(\tilde{a_1}, \tilde{a_2},
\ldots, \tilde{a_{n-1}}, \tilde{a}(\tilde{b}+\tilde{p}) -1
)]\ast [(\tilde{b}+\tilde{p} + 1-\tilde{a}(\tilde{b}+\tilde{p}),
\tilde{a_2}, \ldots, \tilde{a_{n-1}}, \tilde{a}(\tilde{b}+\tilde{p})
-1 )]\\ &&= [(\tilde{a_1}, \tilde{a_2}, \ldots, \tilde{a_{n-1}},
\tilde{a}(\tilde{b}+\tilde{p}) -1)]\ast [(\tilde{b}+\tilde{p},
\tilde{a_2}, \ldots, \tilde{a_{n-1}}, \tilde{a}(\tilde{b}+\tilde{p})
-1 )]\\ &&=[(\tilde{a_1}, \tilde{a_2}, \ldots, \tilde{a_{n-1}},
\tilde{a}(\tilde{b}+\tilde{p}) -1)]\ast [e_1]\\ &&=[(\tilde{a_1},
\tilde{a_2}, \ldots, \tilde{a_{n-1}}, \tilde{a}\tilde{b})]; \,
\text{since $\tilde{a}\tilde{p} \equiv 1 \vpmod {(\tilde{a_1},
\tilde{a_2}, \ldots, \tilde{a_{n-1}})}$}  \ldots \ldots(b). 
\end{eqnarray*} 
Comparing both sides of the equation
$[(\tilde{a_1},  \ldots, \tilde{a_{n-1}}, \tilde{b})]\ast [(\tilde{a_1},
\ldots, \tilde{a_{n-1}}, \tilde{a})] = [(\tilde{a_1}, \tilde{a_2},
\ldots, \tilde{a_{n-1}}, \tilde{a}\tilde{b})]$ under $F$ in $\MSE_n(R,
I)$ we get (2).

\section{On van der Kallen's Excision theorem}  In this section we
recall the construction and properties of the Excision ring.  Let $R$
be a ring and $I$ an ideal in $R$. The Excision ring  
$R \oplus I$, has
coordinate-wise addition and  multiplication given by: $(r, i) \cdot
(s, j) = (rs, rj+si+ij)$. The additive identity of this ring is $(0,
0)$ and the  multiplicative identity is $(1, 0)$. We use the Excision
Theorem  to prove: 

\begin{lemma}\label{lem:bijection}{{\rm(}Double Excision{\rm)}}  Let
$R$ be a ring and $I$ an ideal in $R$. Under the natural maps, for $n
\geq 3$, the following orbit spaces are in bijection:
$$\MSE_n(R \oplus I, 0 \oplus I) \leftrightarrow \MSE_n(\ZZ \oplus (0 \oplus I), (0 \oplus I))
\leftrightarrow \MSE_n(\ZZ \oplus (0 \oplus I))$$ 
$$\leftrightarrow \MSE_n(\ZZ \oplus I) \leftrightarrow \MSE_n(\ZZ \oplus I, 0 \oplus I)
\leftrightarrow \MSE_n(R, I).$$

 Let $\pi_2: R \oplus I \rightarrow R$ be the surjective map given by
$\pi_2(a, i) = a+i$. Assume that for  $n \geq 3$, there exist group
structures (with product given by the van der Kallen formula)  on the
orbit spaces  $\MSE_{n}(R, I)$  and  $\MSE_{n}(R \oplus I, 0 \oplus I)$ 
then, $\pi_2: \MSE_{n}(R \oplus I, 0 \oplus I) \rightarrow \MSE_{n}(R,
I)$ is a group homomorphism.  In particular, $\pi_2$ preserves the
nice group structure. 
\end{lemma}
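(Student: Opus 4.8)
The plan is to apply the Excision Theorem \ref{excision} twice — once for the ideal $I$ of $R$ and once for the ideal $0\oplus I$ of the Excision ring $R\oplus I$ — and to observe that, as a non-unital ring, $0\oplus I$ is nothing but $I$ itself. Concretely, Theorem \ref{excision} applied to $I\subseteq R$ furnishes the two rightmost bijections
$$\MSE_n(\ZZ\oplus I)\leftrightarrow\MSE_n(\ZZ\oplus I,0\oplus I)\leftrightarrow\MSE_n(R,I),$$
while the same theorem applied to $0\oplus I\subseteq R\oplus I$ furnishes
$$\MSE_n(R\oplus I,0\oplus I)\leftrightarrow\MSE_n(\ZZ\oplus(0\oplus I),0\oplus I)\leftrightarrow\MSE_n(\ZZ\oplus(0\oplus I)).$$
The multiplication on $0\oplus I$ inherited from $R\oplus I$ is $(0,i)(0,j)=(0,ij)$, so $i\mapsto(0,i)$ is an isomorphism of non-unital rings $I\xrightarrow{\sim}0\oplus I$; since the Excision ring $\ZZ\oplus(\cdot)$ depends only on this non-unital structure, we obtain a genuine ring isomorphism $\ZZ\oplus I\xrightarrow{\sim}\ZZ\oplus(0\oplus I)$, $(n,i)\mapsto(n,(0,i))$, hence a bijection $\MSE_n(\ZZ\oplus I)\leftrightarrow\MSE_n(\ZZ\oplus(0\oplus I))$ splicing the two displays. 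Concatenating yields the asserted chain.

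For the homomorphism claim I would reduce everything to the single group $\MSE_n(\ZZ\oplus I)$. By the definition of the van der Kallen rule on relative orbit spaces, the group law on $\MSE_n(R,I)$ is the one transported from the absolute $Vv$ law on $\MSE_n(\ZZ\oplus I)$ along the rightmost pair of bijections, and the law on $\MSE_n(R\oplus I,0\oplus I)$ is transported from $\MSE_n(\ZZ\oplus(0\oplus I))$, which the ring isomorphism above identifies as a $Vv$-group with $\MSE_n(\ZZ\oplus I)$. Thus both groups are copies of one and the same group, and it suffices to show that, under the chain of bijections, $\pi_2$ corresponds to the identity endomorphism of $\MSE_n(\ZZ\oplus I)$.

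This last point is a computation at the level of rings. Let $f:\ZZ\oplus I\to R$ and $f':\ZZ\oplus(0\oplus I)\to R\oplus I$ be the structure maps of Definition \ref{ering}, so that $f(n,i)=n\cdot 1_R+i$ and $f'(n,(0,i))=(n\cdot 1_R,i)$. Then $\pi_2(f'(n,(0,i)))=\pi_2(n\cdot 1_R,i)=n\cdot 1_R+i=f(n,i)$, i.e. $\pi_2\circ f'$ equals $f$ after the rng identification, while the two forgetful maps $G,G'$ visibly commute with the rng isomorphism. Since $\pi_2$ is a ring homomorphism carrying $0\oplus I$ onto $I$, it sends $\Um_n(R\oplus I,0\oplus I)$ into $\Um_n(R,I)$ and $\E_n(R\oplus I,0\oplus I)$ into $\E_n(R,I)$, hence descends to the stated map of orbit spaces; and the commuting square shows that this descended map is, under the chain, exactly the identity of $\MSE_n(\ZZ\oplus I)$. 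Being the identity after the transport-of-structure identifications, $\pi_2$ is a group isomorphism, in particular a homomorphism; and as niceness of a relative orbit space is by definition niceness of its associated absolute group $\MSE_n(\ZZ\oplus I)$, the isomorphism $\pi_2$ carries nice structure to nice structure.

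The one place needing genuine care — the main obstacle — is the bookkeeping that the two a priori different transported group laws really are the \emph{same} law on $\MSE_n(\ZZ\oplus I)$ and that $\pi_2$ is the identity for it. This is precisely what the commuting ring square $\pi_2\circ f'=f\circ(\text{rng iso})$, together with the compatibility of $G$ and $G'$ with the rng isomorphism, is designed to deliver; once it is in hand the remainder of the argument is formal.
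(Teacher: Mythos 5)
Your proposal is correct and follows essentially the same route as the paper: two applications of the Excision Theorem (to $I\subseteq R$ and to $0\oplus I\subseteq R\oplus I$) spliced together by the ring isomorphism $\ZZ\oplus(0\oplus I)\cong\ZZ\oplus I$, with the homomorphism claim reduced to the compatibility of $\pi_2$ with the Excision-ring structure maps. Your version merely makes explicit the commuting square $\pi_2\circ f'=f\circ(\text{rng iso})$ that the paper compresses into the phrase ``$\pi_2$ respects the ring structure of the Excision ring.''
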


\begin{proof} That there is a bijection between the first three orbit
spaces listed above follows from Excision theorem \ref{excision}. Similarly, that there is a bijection
between the last three orbit spaces  also follows from Excision
theorem.  

It only remains to check that there is bijection between $\MSE_n(\Z
\oplus 0 \oplus I)$ and $\MSE_n(\Z \oplus I)$. But this follows from
the fact that $\varphi: \Z \oplus (0 \oplus I) \rightarrow \Z \oplus
I$ by $\varphi((m,0, i)) = (m, i)$ is a ring isomorphism inducing an
isomorphism in $\MSE_n$.

The last assertion regarding group homomorphism follows from the fact
that $\pi_2$ respects the ring structure  of the Excision ring. 
\end{proof} 

\begin{definition} We shall say a ring homomorphism $\phi: B
\longrightarrow D$ is a retract if there exists a ring homomorphism
$\gamma: D \longrightarrow B$ so that $\phi \circ \gamma $ is
identity on $D$. We shall also say that $D$ is a retract of $B$.
\end{definition} Note that if $\phi: B \longrightarrow D$ is a retract
then $\phi$ induces an onto map from $\Um_n(B)$ to $\Um_n(D)$. We recall
a Lemma of Suslin (see \cite{Mur-Gup}, Lemma $4.3$), which gives a
handle on the relative  elementary group in certain special cases.

\begin{lemma}\label{lem:august15}  Let $B, D$ be rings and let $D$ be
a retract of $B$ and let $\pi : B \twoheadrightarrow D$.  If $J = {\rm
ker}(\pi)$, then $\E_{n}(B, J) = \E_{n}(B) \cap \SL_{n}(B, J), n \geq 3$.
\end{lemma}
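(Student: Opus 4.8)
The plan is to realize $\E_{n}(B,J)$ as the kernel of the induced homomorphism $\pi_{\ast}\colon\E_{n}(B)\to\E_{n}(D)$, using the section $\gamma\colon D\to B$ (so that $\pi\circ\gamma=\mathrm{id}_{D}$) supplied by the retract hypothesis. One inclusion is purely formal, and the other is exactly where the splitting is used.

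First I would record the easy inclusion $\E_{n}(B,J)\subseteq\E_{n}(B)\cap\SL_{n}(B,J)$. By definition $\E_{n}(B,J)\subseteq\E_{n}(B)$, and each defining generator $E_{21}(x)$ with $x\in J$ reduces to $I_{n}$ modulo $J$, hence lies in $\SL_{n}(B,J)=\ker\!\big(\SL_{n}(B)\to\SL_{n}(B/J)\big)$. Since $\SL_{n}(B,J)$ is normal in $\SL_{n}(B)$, and therefore in $\E_{n}(B)$, it contains the smallest normal subgroup of $\E_{n}(B)$ generated by these elements, namely $\E_{n}(B,J)$.

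For the reverse inclusion the decisive observation is the congruence, valid for every $a\in B$ and all $i\neq j$,
$$E_{ij}(a)\,E_{ij}\big(\gamma\pi(a)\big)^{-1}=E_{ij}\big(a-\gamma\pi(a)\big)\in\E_{n}(B,J),$$
since $\pi\big(a-\gamma\pi(a)\big)=0$ forces $a-\gamma\pi(a)\in J$, and $E_{ij}(x)\in\E_{n}(B,J)$ for every $x\in J$ and every $i\neq j$. Letting $q\colon\E_{n}(B)\to\E_{n}(B)/\E_{n}(B,J)$ denote the quotient map, this congruence says precisely that the two homomorphisms $q$ and $q\circ\gamma_{\ast}\circ\pi_{\ast}$ agree on every elementary generator $E_{ij}(a)$, hence agree on all of $\E_{n}(B)$. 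Now if $\alpha\in\E_{n}(B)\cap\SL_{n}(B,J)$, then, because $J=\ker\pi$ and $D\cong B/J$, the relation $\alpha\equiv I_{n}\pmod J$ means exactly $\pi_{\ast}(\alpha)=I_{n}$ in $\E_{n}(D)$; therefore $q(\alpha)=q\big(\gamma_{\ast}\pi_{\ast}(\alpha)\big)=q(I_{n})$, i.e.\ $\alpha\in\ker q=\E_{n}(B,J)$, as required.

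The formal scheme is short, so I expect the only real work to lie in its inputs. That $\pi_{\ast}$ and $\gamma_{\ast}$ are well-defined homomorphisms of elementary groups is immediate, since a ring homomorphism carries $E_{ij}(a)$ to $E_{ij}$ of its image. The genuinely load-bearing input, and the step I would check most carefully, is that $E_{ij}(x)\in\E_{n}(B,J)$ for all $i\neq j$ and $x\in J$: this reduces, via normality of $\E_{n}(B,J)$ in $\E_{n}(B)$ together with the commutator identity $[E_{ik}(1),E_{kj}(x)]=E_{ij}(x)$ (which requires an available third index $k$, whence the hypothesis $n\ge 3$), to the single defining generator $E_{21}(x)$.
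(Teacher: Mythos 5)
Your proof is correct. Note that the paper does not actually prove this lemma: it is quoted verbatim from Suslin (via Murthy's lecture notes, Lemma 4.3 there), so there is no in-paper argument to compare against. Your argument is the standard one for this result, merely packaged slightly more slickly: instead of writing $\alpha=\prod E_{i_kj_k}(a_k)$ and telescoping to show $\alpha\cdot\gamma_{\ast}\pi_{\ast}(\alpha)^{-1}\in\E_n(B,J)$ term by term, you observe that $q$ and $q\circ\gamma_{\ast}\circ\pi_{\ast}$ are homomorphisms agreeing on generators, which encodes the same telescoping. All the load-bearing points are correctly identified and handled: that $\E_n(B)\cap\SL_n(B,J)$ is normal in $\E_n(B)$ and contains the defining generators (giving the easy inclusion), that $E_{ij}(x)\in\E_n(B,J)$ for all $i\neq j$ and $x\in J$ via the Steinberg commutator identity (this is where $n\geq 3$ enters), and that the retraction forces $a-\gamma\pi(a)\in J$ so that $\pi_{\ast}(\alpha)=I_n$ pulls $\alpha$ into $\ker q=\E_n(B,J)$. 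Nothing is missing.
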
 

We isolate here another result which is a consequence of Lemma
\ref{lem:august15} above  and which will be used repeatedly throughout
this paper. 

\begin{lemma}\label{l} Let the quotient map $q : R \longrightarrow
R/I$ be a retract. Let $v \in \Um_n(R, I)$ be such that its class
$[v]$ is trivial in $\MSE_n(R), n \geq 3$. Then $[v]$ is also trivial in
$\MSE_n(R, I)$.
\end{lemma}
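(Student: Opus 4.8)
The plan is to unwind what triviality means on each side and to bridge the two using Suslin's Lemma~\ref{lem:august15}. Triviality of $[v]$ in $\MSE_n(R)$ says precisely that $v = e_1\varepsilon$ for some $\varepsilon \in \E_n(R)$, while triviality in $\MSE_n(R,I)$ would require $v = e_1\varepsilon'$ for some $\varepsilon' \in \E_n(R,I)$. So the whole problem is to replace the witness $\varepsilon \in \E_n(R)$ by one lying in the \emph{relative} elementary group; the retract hypothesis is exactly what will let me do this.

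First I would record the consequence of the retract assumption. Writing $\gamma : R/I \to R$ for the section with $q\circ\gamma = \mathrm{id}$ and taking $B = R$, $D = R/I$, $J = I$ in Lemma~\ref{lem:august15}, I get $\E_n(R,I) = \E_n(R)\cap \SL_n(R,I)$. Thus it suffices to manufacture, from the given $\varepsilon$, a new witness in $\E_n(R)$ whose reduction modulo $I$ is the identity.

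Next I would produce that witness by correcting $\varepsilon$ with a lift of its reduction. Since $v \in \Um_n(R,I)$ we have $\overline v = e_1$ in $R/I$, so reducing $v = e_1\varepsilon$ modulo $I$ gives $e_1\overline\varepsilon = e_1$. Applying the ring homomorphism $\gamma$ entrywise sends elementary generators to elementary generators, so $\gamma(\overline\varepsilon) \in \E_n(R)$, and applying $\gamma$ to the identity $e_1\overline\varepsilon = e_1$ yields $e_1\gamma(\overline\varepsilon) = e_1$, hence also $e_1\gamma(\overline\varepsilon)^{-1} = e_1$. Now set $\varepsilon' := \gamma(\overline\varepsilon)^{-1}\varepsilon \in \E_n(R)$. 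On the one hand $e_1\varepsilon' = e_1\gamma(\overline\varepsilon)^{-1}\varepsilon = e_1\varepsilon = v$; on the other hand, because $q\circ\gamma = \mathrm{id}$ we have $\overline{\gamma(\overline\varepsilon)} = \overline\varepsilon$, so $\overline{\varepsilon'} = \overline\varepsilon^{\,-1}\overline\varepsilon = I_n$, i.e.\ $\varepsilon' \in \SL_n(R,I)$. Combining the two facts, $\varepsilon' \in \E_n(R)\cap \SL_n(R,I) = \E_n(R,I)$ and $v = e_1\varepsilon'$, which is exactly triviality of $[v]$ in $\MSE_n(R,I)$.

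The only genuinely nontrivial ingredient is Lemma~\ref{lem:august15}, and this is precisely the step where the retract hypothesis is indispensable: for a general ideal one only has the inclusion $\E_n(R,I) \subseteq \E_n(R)\cap \SL_n(R,I)$, which would leave $\varepsilon'$ stranded in the larger group. Everything else is the formal manipulation of reducing modulo $I$ and re-lifting through the splitting $\gamma$; the two small points to verify carefully are that $\gamma$ carries $\E_n(R/I)$ into $\E_n(R)$ (clear, since $\gamma(E_{ij}(\lambda)) = E_{ij}(\gamma(\lambda))$) and that the correction leaves the row $e_1$ unchanged, which is guaranteed by $e_1\gamma(\overline\varepsilon) = e_1$.
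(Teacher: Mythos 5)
Your proposal is correct and follows essentially the same route as the paper: reduce the elementary witness modulo $I$, lift it back through the splitting $\gamma$, correct the original witness by this lift so that it lands in $\E_n(R)\cap\SL_n(R,I)$, and invoke Lemma~\ref{lem:august15} to identify that intersection with $\E_n(R,I)$. The only differences are cosmetic (which side the elementary matrix acts on, and whether the correction factor is multiplied on the left or the right).
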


\begin{proof} We have a ring homomorphism $f: R/ I \longrightarrow R$
such that $q \circ f = id$. By hypothesis there exists a $\varepsilon
\in \E_n(R)$ such that $v\varepsilon = e_1$. Taking the image in $R/I$
we have $e_1 q(\varepsilon) = e_1$ and therefore $e_1 f\circ
q(\varepsilon) = e_1$. Let $\varepsilon' = \varepsilon (f\circ
q(\varepsilon))^{-1} \in \E_n(R) \cap \SL_n(R, I)$. Then by Lemma
\ref{lem:august15} we have $\varepsilon' \in \E_n(R, I)$ and
$v\varepsilon' = e_1$ holds obviously. So $[v]$ is trivial in
$\MSE_n(R, I)$.
\end{proof}

A special case of the above lemma says the following.

\begin{corollary}\label{cor:repeat}  Let $R$ be a ring and $I$ be an
ideal in $R$ and $n \geq 3$ be an integer.  If $[v] \in \Um_{n}(R
\oplus I, 0 \oplus I)$ is such that $[v] = [e_1]$ in $\MSE_{n}(R
\oplus I)$, then  $[v] = [e_1]$ in $\MSE_{n}(R \oplus I, 0 \oplus I).$  
\end{corollary}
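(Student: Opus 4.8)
The plan is to derive this corollary as a direct instance of Lemma \ref{l}, applied to the Excision ring $R \oplus I$ in the role of the base ring and $0 \oplus I$ in the role of the ideal. First I would identify the relevant quotient. From the additive and (twisted) multiplicative structure of $R \oplus I$, the subset $0 \oplus I$ is an ideal, and the projection $q : R \oplus I \to R$, $q(r, i) = r$, is a surjective ring homomorphism with kernel exactly $0 \oplus I$; thus $(R \oplus I)/(0 \oplus I) \cong R$.

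Next I would verify that $q$ is a retract in the sense of the definition preceding Lemma \ref{l}. The natural candidate section is $f : R \to R \oplus I$, $f(r) = (r, 0)$. Additivity is immediate, and multiplicativity follows from the Excision multiplication, since $(r,0)\cdot(s,0) = (rs,\, r\cdot 0 + s\cdot 0 + 0) = (rs, 0)$; moreover $f(1) = (1,0)$ is the multiplicative identity of $R \oplus I$. Since $q \circ f = \mathrm{id}_R$, the quotient map $q : R \oplus I \to (R \oplus I)/(0 \oplus I)$ is a retract.

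With the retract hypothesis confirmed, Lemma \ref{l} applies verbatim. The assumption $[v] = [e_1]$ in $\MSE_n(R \oplus I)$ is precisely the hypothesis that the class of $v \in \Um_n(R \oplus I, 0 \oplus I)$ is trivial in $\MSE_n(R \oplus I)$, and the conclusion of Lemma \ref{l} yields that $[v]$ is trivial in $\MSE_n(R \oplus I, 0 \oplus I)$, i.e.\ $[v] = [e_1]$ there. This is exactly the assertion of the corollary.

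I do not expect any genuine obstacle here; this is purely a matter of packaging. The only point deserving a moment's care is confirming that the coordinate inclusion $r \mapsto (r, 0)$ respects the twisted multiplication of the Excision ring, which it does because the cross terms $rj + si + ij$ all vanish when both second coordinates are $0$. Everything else is a transcription of Lemma \ref{l}, whose own proof already supplies the construction of the corrected elementary matrix $\varepsilon' \in \E_n(R \oplus I, 0 \oplus I)$ carrying $v$ to $e_1$ via Lemma \ref{lem:august15}.
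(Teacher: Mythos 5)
Your proposal is correct and is precisely the paper's intended argument: the paper presents this corollary as "a special case of the above lemma," i.e.\ Lemma \ref{l} applied to the ring $R \oplus I$ with ideal $0 \oplus I$, the retract being the first-coordinate projection with section $r \mapsto (r,0)$. Your verification that this section respects the twisted multiplication is the only detail the paper leaves implicit, and you handle it correctly.
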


%

\begin{lemma}\label{nice} $($Relative Niceness Criterion$)$

Let $R \oplus I$ be the Excision ring 
of $R$ with respect to an ideal $I$ in $R$ and $n \geq 3$. Suppose both $\MSE_{n}(R,
I)$ and $\MSE_{n}(R \oplus I)$ have group structures  given by $Vv$
rule. Then the group structure on $\MSE_{n}(R, I)$ is nice whenever it
is nice for $\MSE_{n}(R \oplus I)$.
\end{lemma}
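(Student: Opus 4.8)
The plan is to push the whole question into the excision ring $R \oplus I$, where the quotient by $0 \oplus I$ is a retract, and then to transport niceness from the absolute orbit space $\MSE_n(R \oplus I)$ to the relative one $\MSE_n(R \oplus I, 0 \oplus I)$, finally returning to $\MSE_n(R, I)$ by Double Excision. The point of passing to the excision ring is precisely that $R \cong (R \oplus I)/(0 \oplus I)$ is a retract there (the section being $a \mapsto (a,0)$), so Corollary \ref{cor:repeat} becomes available, whereas for the pair $(R, I)$ itself the quotient $R/I$ need not be a retract. By Lemma \ref{lem:bijection} the map $\pi_2 : \MSE_n(R \oplus I, 0 \oplus I) \to \MSE_n(R, I)$ is a group homomorphism realizing the natural bijection of that lemma, and since $\pi_2(a,i) = a+i$ is a ring homomorphism it carries the coordinate-wise product formula to the coordinate-wise product formula. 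Hence niceness of $\MSE_n(R, I)$ is equivalent to niceness of $\MSE_n(R \oplus I, 0 \oplus I)$, and it suffices to prove the latter under the hypothesis that $\MSE_n(R \oplus I)$ is nice.

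The heart of the argument is the natural map $\iota : \MSE_n(R \oplus I, 0 \oplus I) \to \MSE_n(R \oplus I)$ induced by the inclusion of relative unimodular rows into all unimodular rows; I would show that $\iota$ is an injective group homomorphism. Injectivity is exactly Corollary \ref{cor:repeat}: the kernel of $\iota$ consists of classes of relative rows that are trivial in $\MSE_n(R \oplus I)$, and that corollary says such classes are already trivial in $\MSE_n(R \oplus I, 0 \oplus I)$. That $\iota$ is a homomorphism I would deduce exactly as in the last line of the proof of Lemma \ref{lem:bijection}: the relative group structure on $\MSE_n(R \oplus I, 0 \oplus I)$ is by definition pulled back through the excision ring $\ZZ \oplus (0 \oplus I)$ via the $Vv$ rule, and $\iota$ factors through the map induced by the ring homomorphism $\ZZ \oplus (0 \oplus I) \to R \oplus I$; since the van der Kallen representative of the product of two rows sharing their last $n-1$ coordinates is a polynomial expression in the entries, any ring homomorphism respecting the excision structure preserves the $Vv$ product, so $\iota$ respects products.

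With $\iota$ an injective homomorphism, niceness transfers at once. Given relative rows $(a, a_2, \ldots, a_n)$ and $(b, a_2, \ldots, a_n)$ in $\Um_n(R \oplus I, 0 \oplus I)$, applying $\iota$ to their relative product and invoking niceness of $\MSE_n(R \oplus I)$ gives, in $\MSE_n(R \oplus I)$,
\[
\iota\big([(b, a_2, \ldots, a_n)] \ast [(a, a_2, \ldots, a_n)]\big) = [(ab, a_2, \ldots, a_n)] = \iota\big([(ab, a_2, \ldots, a_n)]\big),
\]
the last equality holding because $(ab, a_2, \ldots, a_n)$ is again a relative row and $\iota$ does not alter the row. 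Injectivity of $\iota$ then forces $[(b, a_2, \ldots, a_n)] \ast [(a, a_2, \ldots, a_n)] = [(ab, a_2, \ldots, a_n)]$ in $\MSE_n(R \oplus I, 0 \oplus I)$, which is condition (1) of niceness there. Transporting back along the isomorphism $\pi_2$ completes the proof.

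I expect the main obstacle to be the homomorphism property of $\iota$, namely checking that the inclusion-induced map genuinely respects the two a priori different $Vv$ products, the relative one and the absolute one, rather than merely the underlying sets. The most transparent way to see this is to compute both products on two rows differing in a single coordinate using one common relative $p$ (with $ap + \sum_{i\ge 2} a_i b_i = 1$ and $(p, b_2, \ldots, b_n) \in \Um_n(R \oplus I, 0 \oplus I)$), so that the van der Kallen representative $(a(b+p)-1, a_2(b+p), a_3, \ldots, a_n)$ is literally the same row in both orbit spaces, and then to observe that a general product reduces to this case. Injectivity, by contrast, is handed to us directly by Corollary \ref{cor:repeat}, and is the genuine payoff of having moved to the excision ring where the relevant quotient is a retract.
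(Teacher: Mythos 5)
Your proposal is correct and follows essentially the same route as the paper: Corollary \ref{cor:repeat} gives injectivity of the natural map $\MSE_n(R\oplus I, 0\oplus I)\to\MSE_n(R\oplus I)$, niceness is pulled back along this injective homomorphism, and Double Excision transports the conclusion to $\MSE_n(R,I)$. The only difference is that you spell out the homomorphism property of the inclusion-induced map (via a common $Vv$ representative) and the final injectivity computation, which the paper asserts without detail.
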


\begin{proof} Corollary \ref{cor:repeat} shows that the map $\phi :
\MSE_{n}(R \oplus I, 0 \oplus I) \longrightarrow \MSE_{n}(R \oplus I)$
sending the relative class of a unimodular row $v \in \Um_n(R \oplus
I, 0 \oplus I)$ to its absolute class is an injective group
homomorphism. So if the group structure on $\MSE_{n}(R \oplus I)$ is
nice then it is so on $\MSE_{n}(R \oplus I, 0 \oplus I)$ also. Now by
Double Excision Lemma \ref{lem:bijection} we have $\MSE_{n}(R \oplus
I, 0 \oplus I) = \MSE_{n}(R ,  I)$. So the result follows.
\end{proof}

By Theorem \ref{lem:bijection} we have $\MSE_{n}(R \oplus I, 0 \oplus
I) \cong \MSE_{n}(\ZZ \oplus I)$. So Corollary \ref{cor:repeat} leads
us to ask the following.

\begin{question}
 Is the map $\MSE_{n}(\ZZ \oplus I) \longrightarrow
\MSE_{n}(R \oplus I), n \geq 3$ injective?
\end{question}

Lemma \ref{nice} leads us to ask the following.
\begin{question}
Is it true that the group structure on $\MSE_n(\ZZ \oplus I, 0 \oplus I)$ is 
nice if and 
only if the group structure on $\MSE_n(\ZZ \oplus I)$ is nice, 
when both have a group structure given by $Vv$ rule?

\end{question}


\section{Relative orbit space: size $(d + 1)$}

It is well known that the double of a ring w.r.t. 
an ideal is the same as the Excision ring w.r.t. an ideal $I$. The reader 
may look at the reference below for details, if necessary. 

\begin{proposition}\label{prop:Keshari-affine}$(${\it cf.} {\cite{Keshari}\rm,  Proposition 3.1}$)$ 
Let $R$ be a ring of
dimension $d$ and $I$ a finitely generated ideal of $R$. 

Consider the Cartesian square: 
$$\begin{CD}
C @>>> R \\ @VVV  @VVV\\ R @>>> R/I 
\end{CD}$$  Then, $C$ is finitely generated algebra of dimension $d$
over $R$ and integral over $R$. In fact, $C \simeq R \oplus I$ with
coordinate wise addition and multiplication defined by $(a, i)(b,j) =
(ab, aj + ib +ij)$. 

In particular, if $R$ is an affine algebra of
dimension $d$ over a field $k$, then $C$ is also an affine algebra of
dimension $d$ over $k$. 
\end{proposition}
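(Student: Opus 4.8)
The plan is to make the fibre product completely explicit, identify it with the Excision ring, and then read off finiteness, integrality, and dimension as formal consequences. The Cartesian square has both maps into $R/I$ equal to the quotient $q$, so $C$ is canonically the subring
$$C = \{(a,b) \in R \times R : a - b \in I\} \subseteq R \times R,$$
with coordinatewise operations. I would define $\theta : C \to R \oplus I$ by $\theta(a,b) = (a, b-a)$; this is well defined since $b - a \in I$, and it is a bijection with inverse $(a,i) \mapsto (a, a+i)$. It remains to check that $\theta$ is a ring homomorphism. Addition transports to the coordinatewise addition of $R \oplus I$, and the unit $(1,1)$ maps to $(1,0)$. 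For the product, writing $i = b-a$ and $j = b'-a'$, one computes $bb' - aa' = (a+i)(a'+j) - aa' = aj + a'i + ij$, which is exactly the second coordinate of the stated multiplication on $R \oplus I$. This gives the isomorphism $C \simeq R \oplus I$.

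Next I would exhibit the $R$-algebra structure and the finiteness. The map $R \to R \oplus I$, $a \mapsto (a,0)$, is an injective ring homomorphism, so $C$ is an $R$-algebra into which $R$ embeds. Writing a general element as $(a,i) = a\cdot(1,0) + \sum_{k} r_k\,(0,x_k)$, where $I = (x_1, \ldots, x_m)$ and $i = \sum_k r_k x_k$, shows that $C$ is generated as an $R$-module by the finitely many elements $(1,0), (0,x_1), \ldots, (0,x_m)$. This is precisely where the hypothesis that $I$ be finitely generated enters. Being a finite $R$-module, $C$ is \emph{a fortiori} a finitely generated $R$-algebra, and every element, satisfying a monic polynomial over $R$ by the determinant trick, is integral over $R$.

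Since $R \hookrightarrow C$ is an injective integral ring extension, the standard dimension equality for integral extensions yields $\dim C = \dim R = d$. For the affine case, if $R$ is a finitely generated $k$-algebra then $C$, being finite type over $R$, is finite type over $k$ by transitivity; and its dimension equals $d$ by the computation just made, so $C$ is an affine $k$-algebra of dimension $d$.

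I do not expect a serious obstacle: the whole content is the explicit identification $C \simeq R \oplus I$, after which finiteness, integrality, and the dimension count are routine commutative algebra. The only points demanding care are the bookkeeping in verifying that $\theta$ respects multiplication, the use of the finite generation of $I$ to get module-finiteness, and remembering that it is the injectivity of $R \to C$ that licenses the equality $\dim C = \dim R$.
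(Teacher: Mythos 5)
Your proof is correct. Note that the paper itself gives no proof of this proposition --- it is quoted verbatim from Keshari (\cite{Keshari}, Proposition 3.1) with the reader referred there for details --- so there is no in-paper argument to compare against; your identification of the fibre product with the Excision ring via $(a,b)\mapsto(a,b-a)$, the module-finiteness from the finite generation of $I$, and the dimension equality for the injective integral extension $R\hookrightarrow C$ constitute exactly the standard argument the citation is standing in for.
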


\begin{theorem}\label{nice affine} Let $A$ be an affine algebra of
dimension $d \geq 2$ over a perfect field $k$, with ${\rm char}~k \neq
2$ and  the cohomological dimension ${{\rm c.d.}_2}~k \leq 1$.  Let
$I$ be an ideal of $A$. Then the group structure on $\MSE_{d+1}(A, I)$
is nice.
\end{theorem}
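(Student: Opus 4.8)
The plan is to deduce this relative statement from the corresponding absolute result via the Relative Niceness Criterion (Lemma \ref{nice}). That criterion reduces niceness of the group structure on $\MSE_{d+1}(A, I)$ to niceness of the group structure on the \emph{absolute} orbit space $\MSE_{d+1}(A \oplus I)$ of the Excision ring, provided both orbit spaces carry group structures given by the $Vv$ rule. So the entire argument hinges on recognizing that the Excision ring $A \oplus I$ again satisfies the hypotheses of the absolute niceness theorem, and then quoting that theorem.

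First I would check that the two relevant group structures exist. Since $A$ has dimension $d$ and $d+1 \geq 2 + \tfrac{d}{2}$ precisely when $d \geq 2$, Remark \ref{vr} supplies a $Vv$-rule group structure on $\MSE_{d+1}(A, I)$. For the Excision ring, the key input is Proposition \ref{prop:Keshari-affine}: since $A$ is an affine algebra of dimension $d$ over $k$, the ring $A \oplus I$ is again an affine algebra of dimension $d$ over the \emph{same} field $k$. In particular its stable dimension is at most $d \leq 2(d+1) - 4$ for $d \geq 2$, so Theorem \ref{wms} endows $\MSE_{d+1}(A \oplus I)$ with a $Vv$-rule group structure as well.

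The heart of the proof is then the niceness of $\MSE_{d+1}(A \oplus I)$. Because $A \oplus I$ is affine of dimension $d \geq 2$ over the perfect field $k$ with $\mathrm{char}\,k \neq 2$ and $\mathrm{c.d.}_2\,k \leq 1$---all the field hypotheses being properties of $k$ alone, and hence unaffected by passage to $A \oplus I$---the absolute niceness theorem of Garge--Rao (item 1 of the Introduction, \cite{Garge-Rao}) applies verbatim to $A \oplus I$ and shows that $\MSE_{d+1}(A \oplus I)$ is nice. Feeding this into Lemma \ref{nice} yields the niceness of $\MSE_{d+1}(A, I)$, completing the proof.

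The main obstacle---indeed essentially the only conceptual content beyond bookkeeping---is the stability of the whole class of hypotheses under the Excision ring construction. One must know that $A \oplus I$ remains affine of the same dimension over the same field, so that both the dimension bound $d \geq 2$ and the arithmetic conditions on $k$ persist; this is exactly what Proposition \ref{prop:Keshari-affine} guarantees. Everything else is the dimension count ensuring the $Vv$-rule group structures exist and a direct appeal to the Relative Niceness Criterion.
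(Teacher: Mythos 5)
Your proposal is correct and follows essentially the same route as the paper: reduce via the Relative Niceness Criterion (Lemma \ref{nice}) to the absolute case for the Excision ring, note that $A \oplus I$ is again an affine algebra of dimension $d$ over $k$ by Proposition \ref{prop:Keshari-affine}, and invoke the Garge--Rao absolute niceness theorem. Your additional verification that the $Vv$-rule group structures exist is sound bookkeeping that the paper leaves implicit.
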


\begin{proof} By Lemma \ref{nice} it is enough to prove that the group
structure on $\MSE_{d+1}(A \oplus I)$ is nice. Now $A \oplus I$ is an
affine algebra of dimension $d$ over $k$ by Proposition
\ref{prop:Keshari-affine}. So the result follows from
(\cite{Garge-Rao}, Theorem $3.9$).
\end{proof}

\begin{lemma}\label{local} Let $(R, \mathfrak{m})$ be a local ring
with maximal ideal $\mathfrak{m}$. Then the Excision ring $R \oplus
I$ with respect to a proper ideal $I$ in $R$ is also a local ring with
maximal ideal $\mathfrak{m} \oplus I$. 
\end{lemma}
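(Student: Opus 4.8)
The plan is to identify $\mathfrak{m} \oplus I$ as precisely the set of non-units of the Excision ring $R \oplus I$; since the non-units of a commutative ring form an ideal exactly when that ring is local, this identification simultaneously establishes locality and pins down the maximal ideal.

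First I would exploit the first-coordinate projection $\pi : R \oplus I \to R$, $\pi(r,i) = r$. From the multiplication rule $(r,i)(s,j) = (rs, rj+si+ij)$ one sees that $\pi$ is a surjective ring homomorphism with kernel $0 \oplus I$, so $\mathfrak{m} \oplus I = \pi^{-1}(\mathfrak{m})$ is an ideal of $R \oplus I$ and $(R \oplus I)/(\mathfrak{m}\oplus I) \cong R/\mathfrak{m}$ is a field; hence $\mathfrak{m}\oplus I$ is maximal. Moreover, no $(r,i)$ with $r \in \mathfrak{m}$ can be a unit, since $\pi$ would then send it to a unit $r$ of $R$, contradicting $r \in \mathfrak{m}$. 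Thus every element of $\mathfrak{m}\oplus I$ is a non-unit.

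The substance of the argument is the converse: every $(r,i)$ with $r \notin \mathfrak{m}$ is a unit. Here I would first record that, $R$ being local and $I$ proper, we have $I \subseteq \mathfrak{m}$; therefore $i \in \mathfrak{m}$, and since $r \notin \mathfrak{m}$ both $r$ and $r+i$ are units of $R$. I would then solve $(r,i)(s,j) = (1,0)$ explicitly: the first coordinate forces $s = r^{-1}$, and substituting into the second-coordinate equation $rj + si + ij = 0$ gives $(r+i)j = -r^{-1}i$, so $j = -r^{-1}(r+i)^{-1} i$. Because $R \oplus I$ is commutative, this $(s,j)$ is automatically a two-sided inverse.

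The main obstacle — indeed the only point requiring care — is confirming that the candidate inverse actually lives in the Excision ring, i.e. that its second coordinate $j = -r^{-1}(r+i)^{-1} i$ lies in $I$. This is where the invertibility of $r+i$ is used, and that invertibility rests squarely on the inclusion $I \subseteq \mathfrak{m}$ (valid since $I$ is a \emph{proper} ideal of the local ring $R$); once one has the unit $(r+i)^{-1} \in R$, the factor $i \in I$ and the ideal property of $I$ give $j \in I$. With this verified, the non-units of $R \oplus I$ coincide with $\mathfrak{m}\oplus I$, so $R \oplus I$ is local with maximal ideal $\mathfrak{m}\oplus I$, as claimed.
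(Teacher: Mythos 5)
Your proof is correct. It does, however, run along a slightly different track from the paper's. The paper's argument is a Jacobson-radical argument: it shows separately that $0 \oplus I$ and $\mathfrak{m} \oplus 0$ lie in the Jacobson radical of $R \oplus I$ (by checking that $(1,i)$ is a unit with inverse $(1,j)$, $j \in I$, and that elements of $(1,0) + \mathfrak{m}\oplus 0$ are units), concludes that the maximal ideal $\mathfrak{m}\oplus I$ is contained in the Jacobson radical, and hence that it is the unique maximal ideal. You instead characterize the unit group of $R \oplus I$ outright: via the projection $\pi(r,i)=r$ you see that $\mathfrak{m}\oplus I$ is a maximal ideal consisting of non-units, and then you produce the explicit inverse $\bigl(r^{-1},\,-r^{-1}(r+i)^{-1}i\bigr)$ of any $(r,i)$ with $r \notin \mathfrak{m}$, verifying directly that its second coordinate lies in $I$. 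Both arguments hinge on the same key inclusion $I \subseteq \mathfrak{m}$ (this is where properness of $I$ enters, making $r+i$ a unit); what your version buys is that the membership of the inverse's second coordinate in $I$ is completely transparent, and you avoid any appeal to properties of the Jacobson radical, whereas the paper's version is shorter on the page. Either proof is acceptable.
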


\begin{proof} $R \oplus I$ is a commutative ring with identity
$(1,0)$. For any  $i \in I \subset \mathfrak{m}$, $1+i$ is a unit in
$R$ with inverse of the form $1 +j$ for some $j \in I$. Therefore
$(1,0)+ (0,i) = (1, i)$  is a unit in $R \oplus I$ with inverse $(1,
j)$. So $0 \oplus I$ is contained in the Jacobson radical of $R \oplus
I$. We also have $\mathfrak{m} \oplus 0$ contained in the Jacobson
radical since any element in $(1, 0) + \mathfrak{m} \oplus 0$ is a
unit in $R \oplus I$. So $\mathfrak{m} \oplus I$ is contained in the
Jacobson radical. But $\mathfrak{m} \oplus I$ is a maximal ideal in $R
\oplus I$. Hence the result follows.
\end{proof}

\begin{theorem}\label{nice extended} Let $(R, \mathfrak{m})$ be a
commutative, noetherian, local ring  of dimension $d \geq 2$, in which
$2R = R$. Let $I$ be a proper ideal in $R$.  Then the group structure
on $\MSE_{d+1}(R[X], I[X])$ is nice. 
\end{theorem}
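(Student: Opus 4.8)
The plan is to apply the Relative Niceness Criterion (Lemma \ref{nice}) to the ring $R[X]$ and its ideal $I[X]$, thereby reducing this relative statement to an \emph{absolute} niceness statement for the corresponding Excision ring $R[X] \oplus I[X]$. Once we know that both $\MSE_{d+1}(R[X], I[X])$ and $\MSE_{d+1}(R[X]\oplus I[X])$ carry group structures given by the $Vv$ rule, Lemma \ref{nice} tells us that niceness of the former follows from niceness of the latter; and by Double Excision (Lemma \ref{lem:bijection}) the group structure on $\MSE_{d+1}(R[X], I[X])$ is in any case inherited from the subgroup $\MSE_{d+1}(R[X]\oplus I[X], 0\oplus I[X])$ of $\MSE_{d+1}(R[X]\oplus I[X])$, so the whole matter is concentrated in the absolute object.

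The key observation is the natural ring isomorphism
\[
R[X] \oplus I[X] \;\cong\; (R \oplus I)[X],
\]
sending a pair $(f,g)$ with $f = \sum_k f_k X^k \in R[X]$ and $g = \sum_k g_k X^k \in I[X]$ to the polynomial $\sum_k (f_k, g_k)X^k$; one checks directly that the Excision multiplication $(f,g)(f',g') = (ff', fg'+f'g+gg')$ corresponds to coefficient-wise multiplication in $(R\oplus I)[X]$. Thus it suffices to prove that the group structure on $\MSE_{d+1}((R \oplus I)[X])$ is nice. Now by Proposition \ref{prop:Keshari-affine} the Excision ring $R \oplus I$ is a finitely generated $R$-algebra of dimension $d$, hence noetherian, and by Lemma \ref{local} it is local with maximal ideal $\mathfrak{m} \oplus I$. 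Moreover $2(R\oplus I) = R\oplus I$: since $2R = R$, picking $u \in R$ with $2u = 1$ gives $(2,0)(u,0) = (2u,0) = (1,0)$, so $2$ is invertible in $R\oplus I$. Hence $S := R \oplus I$ is a commutative noetherian local ring of dimension $d$ in which $2$ is a unit.

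It therefore remains to apply the absolute niceness result for polynomial rings over such local rings (the local case recorded in the Introduction, due to \cite{Garge-Rao}) to $S = R\oplus I$, giving that the group structure on $\MSE_{d+1}(S[X])$ is nice. Transporting this along the isomorphism above and invoking Lemma \ref{nice} then yields the niceness of $\MSE_{d+1}(R[X], I[X])$.

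The step that requires genuine care is the \emph{existence} of the $Vv$-rule group structures demanded by Lemma \ref{nice}, i.e. the assertion that $\MSE_{d+1}(S[X])$ admits the group structure at all. Since $S$ has dimension $d$, the ring $S[X]$ has dimension $d+1$, so the generic van der Kallen estimate of Theorem \ref{wms} (which needs $n \ge 2 + \tfrac{d+1}{2}$) only produces the structure for $n = d+1$ once $d \ge 3$; the low-dimensional boundary relies on the sharper stable-range behaviour of polynomial rings over local rings that underlies the absolute result being cited. Granting that input, the reduction through $R[X]\oplus I[X] \cong (R\oplus I)[X]$ together with Lemma \ref{local} is routine, and this interplay — local plus Excision is again local of the same dimension — is exactly the analogue for $R[X]$ of the argument used for Theorem \ref{nice affine}.
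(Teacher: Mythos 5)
Your proposal is correct and follows essentially the same route as the paper: apply the Relative Niceness Criterion (Lemma \ref{nice}) to $R[X]$ and $I[X]$, identify $R[X]\oplus I[X]$ with $(R\oplus I)[X]$, observe via Lemma \ref{local} that $R\oplus I$ is a noetherian local ring of dimension $d$ with $2$ invertible, and invoke Theorem 5.1 of \cite{Garge-Rao} for the absolute statement. You in fact spell out several points (the ring isomorphism, the unit $2$, the existence of the $Vv$-rule structure) that the paper leaves implicit.
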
 

\begin{proof} By Lemma \ref{nice} it is enough to prove that the group
structure on $\MSE_{d+1}((R \oplus I)[X])$ is nice. But $R \oplus I$
is a local ring by Lemma \ref{local}. So the result follows from
Theorem $5.1$ in \cite{Garge-Rao}. 
\end{proof}

We recall (\cite{vdK2}, Theorem  $2.2$) of van der Kallen.
\begin{theorem}\label{v} $($W. van der Kallen$)$

Let $n \geq 3$. Assume that $R$ is
commutative with $Sd(R) \leq 2n - 3$ or assume that the maximal
spectrum of $R$ is the union of finitely many noetherian subspaces of
dimension at most $2n - 3$. Let $i,j$ be non-negative integers. For
every $\sigma \in \GL_{n + i}(R) \cap \E_{n+i+j+1}(R, I)$ there are
matrices $u, v, w, M$ with entries in $I$ and $q$ with entries in $R$
such that

$$\begin{pmatrix}
I_{i+1} + uq & v \\ wq           & I_{n-1} + M      
\end{pmatrix} \in \sigma \E_{n+i}(R, I), \quad
\begin{pmatrix} I_{j+1} + qu & qv \\ w            & I_{n-1} + M      

\end{pmatrix} \in \E_{n+j}(R, I).
$$
\end{theorem}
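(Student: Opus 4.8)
The plan is to treat this as an injective-stability statement \emph{with explicit witnesses}, and to prove it by a dilation-and-compression argument inside the relative elementary group, in the tradition of Vaserstein's and Suslin's stability theorems. The natural combinatorial frame is the partition of the index set $\{1,\dots,n+i+j+1\}$ into three consecutive blocks of sizes $i+1$, $n-1$ and $j+1$; indeed $(i+1)+(n-1)+(j+1)=n+i+j+1$, and the given $\sigma\in\GL_{n+i}(R)$, embedded as $\sigma\oplus I_{j+1}$, meets only the first two blocks. The first observation I would record is the elementary calculus of the two ``corner'' generators: the matrix $E_u$ carrying the block $u$ in block position $(1,3)$ and the matrix $E_q$ carrying $q$ in block position $(3,1)$. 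A direct computation gives that $E_uE_q$ has $(1,1)$-corner $I_{i+1}+uq$ while $E_qE_u$ has $(3,3)$-corner $I_{j+1}+qu$. This noncommutativity is precisely the source of the two corners $I_{i+1}+uq$ and $I_{j+1}+qu$ appearing in the two target matrices, so the whole theorem is an organized exploitation of it.

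Step one is to express $\sigma\oplus I_{j+1}$, which lies in $\E_{n+i+j+1}(R,I)$ by hypothesis, as a product of the standard relative generators $\gamma E_{kl}(x)\gamma^{-1}$ with $x\in I$. Step two, the heart of the matter, is compression: invoking the hypothesis $Sd(R)\le 2n-3$ (or its noetherian-spectrum variant), together with Vaserstein's monotonicity Lemma~\ref{vaslemma} applied to the ideal $I$, I would repeatedly shorten the ``free'' directions by relative stable-range moves, driving all elementary activity out of the central core of size $n-1$ and into the two flanking blocks, where it is absorbed into corner perturbations of the form $uq$ and $qu$. Since only relative generators enter, each perturbation block $u,v,w,M$ produced this way automatically has entries in $I$, while $q$ is free to have entries in $R$.

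Step three is to read off the two stated conclusions from the single compressed element by restricting to the two principal sub-blocks of sizes $n+i$ and $n+j$: the order $E_uE_q$ presents the first matrix as an element of the coset $\sigma\,\E_{n+i}(R,I)$, and the order $E_qE_u$ presents the second as an element of $\E_{n+j}(R,I)$, with the shared data $u,v,w,M,q$ being exactly what survives the compression. To certify that the residual factors lie in the \emph{relative} elementary subgroups and not merely in $\E_{m}(R)\cap\SL_{m}(R,I)$ of the ambient group, I would appeal to Suslin's Lemma~\ref{lem:august15} (via a retraction onto the appropriate quotient), which is tailor-made for converting membership in $\E_{m}(B)\cap\SL_{m}(B,J)$ into membership in $\E_{m}(B,J)$.

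The main obstacle is the compression step itself: one must arrange the successive reductions so that the stable-dimension bound $2n-3$ is exactly what is needed to collapse the support onto a core of size $n-1$ (no smaller), and simultaneously preserve the relative condition ``entries in $I$'' at every stage, as well as the symmetric bookkeeping that lets the same $u,v,w,M,q$ serve both matrices. Controlling which moves act relatively, and verifying that the leftover cosets are relative rather than absolute, is the delicate part; this is where the finer relative stability lemmas of Vaserstein--van der Kallen, rather than the soft structural facts recalled above, carry the argument.
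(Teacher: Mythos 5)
The paper does not prove this statement at all: Theorem~\ref{v} is quoted verbatim from van der Kallen (\cite{vdK2}, Theorem 2.2) as a known input, so the only thing to compare your proposal against is van der Kallen's original argument. Measured against that, your text is an outline rather than a proof, and the gap sits exactly where you locate it yourself. The block partition $(i+1)+(n-1)+(j+1)=n+i+j+1$ and the observation that $E_uE_q$ versus $E_qE_u$ produces the corners $I_{i+1}+uq$ versus $I_{j+1}+qu$ correctly explain the \emph{shape} of the conclusion, but they do not produce it. The entire content of the theorem is the compression step: given an arbitrary word in the relative generators $\gamma E_{kl}(x)\gamma^{-1}$ representing $\sigma\oplus I_{j+1}$, one must show that stable-range moves can push all of it into the prescribed block form with a \emph{single shared} data set $u,v,w,M,q$, with $u,v,w,M$ over $I$, and with the two residual factors landing in $\E_{n+i}(R,I)$ and $\E_{n+j}(R,I)$ respectively. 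You describe this as "driving all elementary activity out of the central core" and then defer the execution to "the finer relative stability lemmas of Vaserstein--van der Kallen" --- but Theorem~\ref{v} \emph{is} the finer relative stability lemma; invoking its relatives at this point is circular. Van der Kallen's actual proof is a delicate induction on the length of the word, with explicit bookkeeping of how each new generator deforms the quadruple $(u,v,w,M)$ while keeping both displayed matrices in the correct cosets simultaneously; none of that mechanism appears in your sketch, and the claim that the bound $2n-3$ is "exactly what is needed" is asserted, not verified.

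A second, more localized problem is your step three. To upgrade membership in $\E_m(R)\cap\SL_m(R,I)$ to membership in $\E_m(R,I)$ you invoke Lemma~\ref{lem:august15}, but that lemma requires the quotient map onto $R/I$ (or the relevant quotient) to admit a ring-theoretic section, which is not among the hypotheses of Theorem~\ref{v} for an arbitrary ideal $I$ of an arbitrary commutative ring $R$. The standard workaround is to transport the whole problem to the excision ring $\ZZ\oplus I$ or $R\oplus I$, where the retraction exists, and then descend; your proposal never makes this move. As written, the certification that the leftover factors are \emph{relatively} elementary is unsupported. In short: the frame is right, the asymmetry heuristic is right, but the proof is not there.
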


\begin{corollary}\label{cor:completable-affine-d}  Let $A$ be an
affine algebra of dimension $d \geq 2$ over an algebraically closed
perfect field $k$, with ${\rm char}~k \neq 2$ and  the cohomological
dimension ${{\rm c.d.}_2}~k \leq 1$. Let $\sigma \in \SL_{d+1}(A, I)
\cap \E_{d+2}(A, I)$. Then, $[e_1\sigma] = [e_1]$ in $\MSE_{d+1}(A,
I)$ i.e. $e_1\sigma$ is relatively elementarily equivalent to $e_1$.  
\end{corollary}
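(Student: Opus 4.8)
The plan is to use van der Kallen's normal form (Theorem~\ref{v}) to replace $\sigma$ by an explicit stabilised matrix, and then to exploit the nice (coordinate-wise) multiplication on $\MSE_{d+1}(A,I)$, which is available by Theorem~\ref{nice affine}, after transporting the computation to the Excision ring, where the auxiliary scalar produced by Theorem~\ref{v} becomes harmless.

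First I would apply Theorem~\ref{v} with $n=d+1$ and $i=j=0$. The stable dimension hypothesis holds, since $A$ is affine of dimension $d$, so $Sd(A)\le d\le 2d-1=2n-3$; and $\sigma\in\SL_{d+1}(A,I)\cap\E_{d+2}(A,I)\subseteq\GL_{d+1}(A)\cap\E_{n+i+j+1}(A,I)$. This produces a scalar $u\in I$, a scalar $q\in A$, a row $v$, a column $w$ and a $d\times d$ block $M$, all with entries in $I$, such that
\[
\tau_1:=\begin{pmatrix}1+uq & v\\ wq & I_d+M\end{pmatrix}\in\sigma\,\E_{d+1}(A,I),\qquad
\tau_2:=\begin{pmatrix}1+qu & qv\\ w & I_d+M\end{pmatrix}\in\E_{d+1}(A,I).
\]
Reading off first rows, using $uq=qu$ (both scalars), and using that right multiplication by $\E_{d+1}(A,I)$ does not change classes, I obtain $[e_1\sigma]=[e_1\tau_1]=[(1+uq,v)]$ and $[(1+uq,qv)]=[e_1\tau_2]=[e_1]$ in the nice group $\MSE_{d+1}(A,I)$.

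It therefore suffices to prove $[(1+uq,v)]=[(1+uq,qv)]$. The obstruction is that passing from $v$ to $qv$ is multiplication of $I$-valued coordinates by the scalar $q\notin I$, which is not a legal coordinate-wise operation inside the relative group. To circumvent this I move to the Excision ring $B:=A\oplus I$, which by Proposition~\ref{prop:Keshari-affine} is again affine of dimension $d$ over $k$. By Double Excision (Lemma~\ref{lem:bijection}) $\MSE_{d+1}(A,I)\cong\MSE_{d+1}(B,0\oplus I)$; let $\hat r_1=((1,uq),(0,v_1),\dots,(0,v_d))$ and $\hat r_2=((1,uq),(0,qv_1),\dots,(0,qv_d))$ be the natural lifts in $\Um_{d+1}(B,0\oplus I)$, so that $[\hat r_2]=[e_1]$ and the goal becomes $[\hat r_1]=[e_1]$.

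The last step is carried out in the absolute group $\MSE_{d+1}(B)$, which is nice by $(\cite{Garge-Rao},\text{ Theorem }3.9)$. The crucial auxiliary rows are the $g_j$ having $(1,uq)$ in the first slot and $(q,0)$ in the $(j+1)$-st slot. Each $g_j$ is unimodular because $(1,uq)\cdot(1,0)+(q,0)\cdot(0,-u)=(1,0)$ in $B$ (again $uq=qu$), and is elementarily trivial: the same identity reduces its first entry to the unit $(1,0)$, after which the row clears to $e_1$. Since $(q,0)\cdot(0,v_j)=(0,qv_j)$, multiplying $\hat r_1$ successively by $[g_1],\dots,[g_d]$ via the coordinate-wise rule converts each $(0,v_j)$ into $(0,qv_j)$; as every $[g_j]=[e_1]$, this gives $[\hat r_2]=[\hat r_1]$ in $\MSE_{d+1}(B)$, whence $[\hat r_1]=[e_1]$ there. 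Finally $\hat r_1\in\Um_{d+1}(B,0\oplus I)$ and $B\twoheadrightarrow B/(0\oplus I)\cong A$ is a retract, so Corollary~\ref{cor:repeat} descends this to $[\hat r_1]=[e_1]$ in $\MSE_{d+1}(B,0\oplus I)\cong\MSE_{d+1}(A,I)$, that is, $[e_1\sigma]=[e_1]$. I expect the genuine difficulty to be precisely the scalar $q\notin I$: the heart of the argument is isolating the rows $g_j$, verifying their unimodularity and their \emph{absolute} (non-relative) elementary triviality, and justifying the coordinate-wise peeling in $\MSE_{d+1}(B)$, after which the retract property of the Excision ring supplies the descent back to the relative orbit space.
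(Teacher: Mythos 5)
Your proposal is correct, and its skeleton is exactly the paper's: apply Theorem~\ref{v} with $i=j=0$, $n=d+1$, read off the two first rows $(1+uq,v)$ and $(1+uq,qv)$, note that the second is trivial because $\tau_2\in\E_{d+1}(A,I)$, and then use niceness to pass from $v$ to $qv$ at the cost of classes that die because $(1+uq)-u\cdot q=1$. Where you diverge is in how that last step is executed, and your version is the more careful one. The paper performs it in one line inside $\MSE_{d+1}(A,I)$, writing $[(1+uq,v)]=[(1+uq,v)]\ast[(1+uq,q)]=[(1+uq,qv)]$; taken literally this is problematic, since $q\in A$ need not lie in $I$, so a row with $q$ in a coordinate is not congruent to $e_1$ modulo $I$ and is not an element of $\Um_{d+1}(A,I)$ at all (nor is the displayed row even of the right length as written). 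You identify precisely this obstruction and resolve it by lifting to the Excision ring $B=A\oplus I$, doing the coordinate-by-coordinate peeling with the rows $g_j$ in the \emph{absolute} group $\MSE_{d+1}(B)$ -- where $(q,0)$ is a perfectly legal entry, the $g_j$ are visibly unimodular and elementarily trivial, and niceness holds by (\cite{Garge-Rao}, Theorem 3.9) since $B$ is again affine of dimension $d$ -- and then descending to the relative orbit space via Corollary~\ref{cor:repeat} and Double Excision. This buys a fully rigorous justification of the step the paper treats as immediate, at the price of invoking the excision machinery one more time; the only point you leave implicit is that niceness of $\MSE_{d+1}(B)$ in the first coordinate propagates to the other coordinates, which is exactly the equivalence $(1)\Leftrightarrow(3)$ the paper establishes after its definition of niceness.
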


\begin{proof} Putting $i = j = 0$ and $n = d + 1 $ in the Theorem
\ref{v} we have
 
$$\begin{pmatrix}
1 + uq & v \\ wq           & I_{d} + M      
\end{pmatrix} \in \sigma \E_{d+1}(R, I), \quad
\begin{pmatrix} 1 + qu & qv \\ w            & I_{d} + M      

\end{pmatrix} \in \E_{d+1}(R, I).
$$
Therefore in $\MSE_{d+1}(R, I)$ we have
\begin{eqnarray*} [e_1\sigma]\\ &=& [(1 + uq, v)]\\ &=& [(1+uq, v)]
\ast [(1+uq, q)]; \quad \text{since $[(1+uq, q)]$ is the identity
$[e_1]$}\\ &=& [(1 + uq, qv)]; \quad \text{since the group structure
on $\MSE_{d+1}(R, I)$ is nice by Theorem \ref{nice affine}}\\ &=& [e_1].
\end{eqnarray*}
\end{proof}

Similarly using Theorem \ref{nice extended} we have the following.

\begin{corollary}\label{cor:completable-polynomial-(d+1)}  Let $(R,
\mathfrak{m})$ be a commutative, noetherian, local ring of dimension
$d \geq 2$, in which $2R = R$ and $\sigma(X) \in \SL_{d+1}(R[X], I[X])
\cap \E_{d+2}(R[X], I[X])$. Then, $e_1\sigma(X)$ is relatively
elementarily equivalent to $e_1$. 
\end{corollary}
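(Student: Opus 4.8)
The plan is to transcribe the proof of Corollary~\ref{cor:completable-affine-d} to the polynomial setting: I would take its base ring to be $R[X]$ and its ideal to be the extension $I[X]$, and use the polynomial niceness result Theorem~\ref{nice extended} in place of Theorem~\ref{nice affine}. Concretely, I would apply van der Kallen's structure theorem Theorem~\ref{v} to the matrix $\sigma(X)$, read off the first rows of the two matrices it produces, and then collapse the resulting product in $\MSE_{d+1}(R[X],I[X])$ by means of the nice coordinate-wise group law.

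First I would check that Theorem~\ref{v} is applicable with $n=d+1$ and $i=j=0$, its base ring taken to be $R[X]$ and its ideal $I[X]$; the hypothesis to verify is $Sd(R[X])\le 2n-3=2d-1$. Since $R$ is noetherian local of dimension $d$, the ring $R[X]$ is noetherian of Krull dimension $d+1$, so its stable dimension is at most its Krull dimension, i.e.\ $Sd(R[X])\le d+1$; as $d\ge 2$ this yields $Sd(R[X])\le d+1\le 2d-1$. I would also record that $\sigma(X)\in\SL_{d+1}(R[X],I[X])\cap\E_{d+2}(R[X],I[X])$ places $\sigma(X)$ in $\GL_{d+1}(R[X])\cap\E_{d+2}(R[X],I[X])$, which is precisely the hypothesis of Theorem~\ref{v}, and that $e_1\sigma(X)\equiv e_1\pmod{I[X]}$, so $e_1\sigma(X)\in\Um_{d+1}(R[X],I[X])$ and its relative class is defined. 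Finally, by Theorem~\ref{nice extended} the orbit space $\MSE_{d+1}(R[X],I[X])$ carries a nice $Vv$ group structure (using that $R$ is local with $2R=R$ and $I$ is proper).

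With these in hand, Theorem~\ref{v} produces matrices $u,v,w,M$ over $I[X]$ and $q$ over $R[X]$ with
$$\begin{pmatrix} 1+uq & v \\ wq & I_d+M \end{pmatrix}\in\sigma(X)\,\E_{d+1}(R[X],I[X]),\qquad\begin{pmatrix} 1+qu & qv \\ w & I_d+M \end{pmatrix}\in\E_{d+1}(R[X],I[X]).$$
Reading off first rows, the left relation gives $[e_1\sigma(X)]=[(1+uq,v)]$ in $\MSE_{d+1}(R[X],I[X])$, while the right one exhibits a relatively elementary row, so its class is the identity $[e_1]$. Then, exactly as in Corollary~\ref{cor:completable-affine-d}, I would insert this identity and apply the nice coordinate-wise multiplication furnished by Theorem~\ref{nice extended} to collapse the product to $[e_1]$, concluding $[e_1\sigma(X)]=[e_1]$, that is, $e_1\sigma(X)$ is relatively elementarily equivalent to $e_1$.

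The only genuine point of care, and the step I expect to be the main obstacle, is the preliminary one: confirming that base change from $R$ to $R[X]$ preserves every running hypothesis --- the stable-dimension bound feeding Theorem~\ref{v}, and the locality, the condition $2R=R$, and the properness of $I$ feeding Theorem~\ref{nice extended}. Once these are secured the computation is a line-by-line copy of the affine case, since it uses nothing beyond the nice multiplication formula and no feature special to affine algebras, so I expect no further obstruction.
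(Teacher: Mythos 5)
Your proposal is correct and coincides with the paper's intended argument: the paper's entire proof of this corollary is the single remark ``Similarly using Theorem \ref{nice extended}\,'', i.e.\ precisely the transcription of Corollary \ref{cor:completable-affine-d} to $R[X]$ and $I[X]$ that you carry out. Your verification that $Sd(R[X])\le d+1\le 2d-1$ (so Theorem \ref{v} applies with $n=d+1$, $i=j=0$) and that Theorem \ref{nice extended} supplies the nice group law is exactly the bookkeeping the paper leaves implicit.
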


\section{Improved injective stability in relative case}

In this section  we shall recall the following relative version of 
(\cite{vdK-Rao}, Theorem 3.4) with respect to a principal ideal.

%

\begin{lemma}\label{ris}
{\rm(}{\it c.f.} \cite{AG}$)$
Let $A$ be an affine algebra of dimension $d \geq 2$ over an algebraically closed field $k$ and $I = (a)$ a principal ideal in $A$. Let $\alpha \in \SL_{d+1}(A, I)\cap \E(A, I)$. Then $\alpha$ is isotopic to identity relative to $I$. Moreover if $A$ is nonsingular then, 
\[\SL_{d+1}(A, I) \cap \E(A, I) = \E_{d+1}(A, I).\]
\end{lemma}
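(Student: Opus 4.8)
The plan is to transfer the whole question to the Excision ring $B = A \oplus I$ and to exploit the two splittings this ring carries. By Proposition \ref{prop:Keshari-affine}, $B$ is an affine $k$-algebra of dimension $d$. The first projection $p\colon B \to A$, $(r,i)\mapsto r$, is a retract with kernel $J = 0 \oplus I$ and section $s(r)=(r,0)$; the summation map $\pi_2\colon B\to A$, $(r,i)\mapsto r+i$, is also split by $s$ and restricts to a (non-unital) ring isomorphism $J \to I$. Since any matrix in $\SL_{d+1}(B,J)$ has the form $I_{d+1}+N$ with the entries of $N$ in $J$, and $\pi_2$ is multiplicative, $\pi_2$ induces group isomorphisms $\SL_{d+1}(B,J)\cong \SL_{d+1}(A,I)$, $\E_{d+1}(B,J)\cong \E_{d+1}(A,I)$, and likewise for the stable groups; while Suslin's Lemma \ref{lem:august15}, applied to the retract $p$, gives $\E_{n}(B,J)=\E_n(B)\cap \SL_n(B,J)$ for $n\geq 3$. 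Thus the given $\alpha$ corresponds to a unique $\tilde\alpha\in \SL_{d+1}(B,J)\cap \E(B,J)\subseteq \SL_{d+1}(B)\cap \E(B)$.

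For the first assertion I would apply the absolute isotopy statement of (\cite{vdK-Rao}, Theorem 3.4) to $B$ — this half uses no smoothness, only that $B$ is affine of dimension $d\geq 2$ over the algebraically closed field $k$ — to obtain $\tilde\theta(T)\in \SL_{d+1}(B[T])\cap\E(B[T])$ with $\tilde\theta(0)=I_{d+1}$ and $\tilde\theta(1)=\tilde\alpha$. To force the isotopy to be relative I replace $\tilde\theta$ by $\theta'(T)=\tilde\theta(T)\,\bigl((s\circ p)(\tilde\theta(T))\bigr)^{-1}$; since $p\circ s=\mathrm{id}$ and $p(\tilde\alpha)=I_{d+1}$, this $\theta'(T)$ lies in $\SL_{d+1}(B[T],J[T])$, equals $I_{d+1}$ at $T=0$ and $\tilde\alpha$ at $T=1$. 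Transporting through the isomorphism $\pi_2$ yields $\Theta(T):=\pi_2(\theta'(T))\in \SL_{d+1}(A[T],I[T])$ with $\Theta(0)=I_{d+1}$ and $\Theta(1)=\alpha$; that is, $\alpha$ is isotopic to the identity relative to $I$. Principality of $I$ is not needed here, only that $I$ is finitely generated.

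For the second assertion, with $A$ nonsingular, the inclusion $\E_{d+1}(A,I)\subseteq \SL_{d+1}(A,I)\cap \E(A,I)$ is clear, so the content is the reverse inclusion, equivalently $\tilde\alpha\in \E_{d+1}(B,J)$, which by Suslin's lemma means $\tilde\alpha\in \E_{d+1}(B)$. The two naive routes both fail: one cannot invoke the group-equality half of (\cite{vdK-Rao}, Theorem 3.4) for $B=A\oplus I$, because $B$ is singular even when $A$ is smooth; and one cannot bridge from $\alpha\in \E_{d+1}(A)\cap \SL_{d+1}(A,I)$ (which the absolute theorem does provide, applied to the smooth ring $A$) down to $\E_{d+1}(A,I)$ via Suslin's lemma, because the quotient $A\to A/I$ is not a retract. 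Instead I would run the smooth argument on $A$ itself. Applying Suslin's Lemma \ref{lem:august15} to the retract $p$ over $B[T]$ shows that the isotopy $\theta'(T)$ constructed above lies in $\E(B[T],J[T])$, so $\Theta(T)\in \SL_{d+1}(A[T],I[T])\cap \E(A[T],I[T])$ with $\Theta(0)=I_{d+1}$. It then suffices to prove a relative homotopy-invariance statement for smooth $A$: any relatively stably elementary isotopy that is trivial at $T=0$ must have $\Theta(1)\in \E_{d+1}(A,I)$.

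The main obstacle is exactly this last step. I expect to establish it through a relative local–global (Quillen–Suslin) principle combined with Lindel's theorem, which presents each localization $A_{\mathfrak m}$ as a localized polynomial ring and thereby supplies relative injective stability together with a dilation/patching argument locally. This is precisely the point at which the hypothesis $I=(a)$ becomes essential: the relative local–global principle and the accompanying dilation lemmas are carried out along the principal ideal $(a)$, and principality is what keeps the relative structure under control after localizing at and completing along $a$. Patching the local conclusions then gives $\alpha=\Theta(1)\in \E_{d+1}(A,I)$, which with the evident reverse inclusion yields $\SL_{d+1}(A,I)\cap \E(A,I)=\E_{d+1}(A,I)$.
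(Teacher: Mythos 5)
First, a point of reference: the paper itself offers no proof of Lemma \ref{ris} — it is quoted from \cite{AG}, listed as in preparation — so there is no in-paper argument to measure your proposal against; what follows is an assessment of your argument on its own terms. The first half of your proposal is sound. The passage to the excision ring $B = A \oplus I$, the identification of $\GL_{d+1}(B, 0\oplus I)$ with $\GL_{d+1}(A,I)$ via $\pi_2$ (and likewise for the relative $\SL$ and elementary groups, using surjectivity of $\pi_2$ on relative elementary groups together with injectivity on matrices congruent to $I_{d+1}$ mod $J$), the use of the retraction $p$ with Suslin's Lemma \ref{lem:august15} to place $\tilde\alpha$ in $\E(B,J)$, and the correction $\theta'(T)=\tilde\theta(T)\bigl((s\circ p)(\tilde\theta(T))\bigr)^{-1}$ turning an absolute isotopy over $B$ into one relative to $J$ — all of this is correct and yields the first assertion, granted the isotopy half of (\cite{vdK-Rao}, Theorem 3.4) applied to the (necessarily singular) affine algebra $B$, which indeed does not require smoothness.

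The second assertion, however, is where the entire content of the lemma lies, and your proposal stops at exactly the hard point. You correctly diagnose why the two naive routes fail ($B$ is singular, so the group-equality half of the absolute theorem does not apply to it; and $A \to A/I$ is not a retract, so Suslin's lemma cannot convert $\E_{d+1}(A)\cap\SL_{d+1}(A,I)$ into $\E_{d+1}(A,I)$), and you correctly reduce to the statement: if $\Theta(T)\in \SL_{d+1}(A[T],I[T])\cap\E(A[T],I[T])$ with $\Theta(0)=I_{d+1}$, then $\Theta(1)\in\E_{d+1}(A,I)$. But this relative homotopy invariance is asserted, not proved: you appeal to ``a relative local--global principle combined with Lindel's theorem'' without stating the dilation lemma, the relative local--global principle for $\E_{d+1}(-,I)$, or the local statement that would have to be verified after presenting each $A_{\mathfrak m}$ via Lindel as a neighbourhood of a localized polynomial ring — and this is the one place where the hypothesis $I=(a)$ principal actually enters. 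Since that is precisely the theorem \cite{AG} is being written to supply, the proposal as it stands reduces Lemma \ref{ris} to an unproved claim of essentially the same depth; the gap is real, even though the surrounding reductions are correct and well organized.
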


%

\begin{lemma}\label{a} Let $R$ be a commutative ring and $I$ an ideal
in $R$. Let $u, v \in \Um_3(R, I)$ such that $u \alpha = v$ for some
$\alpha \in \SL_3(R, I) \cap \E_4(R, I)$. Then $u$ and $v$ are
elementary equivalent relative to $I$.
\end{lemma}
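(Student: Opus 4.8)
The plan is to prove directly that $[u]=[v]$ in $\MSE_3(R,I)$; since $v=u\alpha$, this is the assertion $[u]=[u\alpha]$. The engine I would use is van der Kallen's splitting result, Theorem \ref{v}, applied to $\sigma=\alpha\in\GL_3(R)\cap\E_4(R,I)$ with $n=3$ and $i=j=0$, so that its only dimension hypothesis reads $Sd(R)\le 2n-3=3$ (met in the intended applications, e.g. when $R$ is at most three-dimensional). Renaming van der Kallen's data to avoid clashing with the rows $u,v$, this produces a scalar $q\in R$ and $a,\mathbf b,\mathbf c,M$ with entries in $I$ together with two matrices
\[
\beta=\begin{pmatrix}1+aq & \mathbf b\\ q\mathbf c & I_2+M\end{pmatrix}\in\alpha\,\E_3(R,I),
\qquad
\gamma=\begin{pmatrix}1+qa & q\mathbf b\\ \mathbf c & I_2+M\end{pmatrix}\in\E_3(R,I).
\]
Writing $\beta=\alpha\varepsilon$ with $\varepsilon\in\E_3(R,I)$ gives $u\beta=u\alpha\varepsilon=v\varepsilon$, hence $[u\beta]=[v]$ in $\MSE_3(R,I)$. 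Thus the entire lemma collapses to the single identity $[u\beta]=[u]$.

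Next I would exploit the special shape of $\beta$ and its companion $\gamma$. Because $a,\mathbf b,\mathbf c,M\in I$, both matrices are relatively unipotent ($\equiv I_3 \bmod I$), their top-left entries agree ($1+aq=1+qa$, as scalars commute), and $\gamma$ is obtained from $\beta$ simply by transferring the scalar $q$ from the bottom-left block to the top-right block. Since $[u\beta]=[u\beta\delta]$ for every $\delta\in\E_3(R,I)$, I may replace $\beta$ by any right $\E_3(R,I)$-translate; in particular I may absorb factors $\begin{pmatrix}1&0\\ \mathbf s&I_2\end{pmatrix}$ and $\begin{pmatrix}1&\mathbf t\\ 0&I_2\end{pmatrix}$ with $\mathbf s,\mathbf t$ over $I$, all of which lie in $\E_3(R,I)$. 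Using such translates together with the membership $\gamma\in\E_3(R,I)$ and the unimodularity of $u$, I would run the weak Mennicke symbol computation associated with Definition \ref{weak symbol}(2) and the product formula recalled after it — in the same spirit as the proof of Corollary \ref{cor:completable-affine-d}, but crucially \emph{without} invoking niceness — to kill the $q$-dependence and arrive at $u\beta\sim u$ relative to $I$.

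The main obstacle is precisely this last identity $[u\beta]=[u]$ for an \emph{arbitrary} unimodular $u$. The matrix $\beta$ is not itself relatively elementary — if it were, $\alpha$ would lie in $\E_3(R,I)$, i.e. injective stability would hold with no hypotheses whatsoever, which is false — so $[u\beta]=[u]$ cannot come from a matrix identity and must instead be read off as a relation in the orbit space. With no niceness at our disposal (indeed Lemma \ref{a} is meant to precede and feed the study of niceness), the relation has to be extracted from the defining relations of the universal weak Mennicke symbol, using $\gamma$ as the ``other half'' of a Whitehead-type identity and the unimodularity of $u$ to realize it; this is essentially a relative Mennicke–Newman step. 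The delicate bookkeeping is to keep every intermediate operation in $\E_3(R,I)$ rather than merely in $\E_3(R)$; if this relativeness is hard to maintain directly, a natural device is to first transport the whole configuration to the Excision ring $R\oplus I$ by Double Excision (Lemma \ref{lem:bijection}), where $R$ is a retract and Suslin's Lemma \ref{lem:august15} converts absolute elementary moves into relative ones, and then to push the resulting equivalence back down along $\pi_2$.
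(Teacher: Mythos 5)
There is a genuine gap, and the approach itself would not close it. Your entire argument funnels into the single claim $[u\beta]=[u]$, which you correctly identify as ``the main obstacle'' but never prove; the devices you point to cannot supply it. First, the block shape of $\beta$ and its companion $\gamma$ produced by Theorem \ref{v} controls only the \emph{first row} of $\beta$, i.e.\ it gives information about $e_1\beta$ and $e_1\gamma$, not about $u\beta$ for an arbitrary $u\in\Um_3(R,I)$; since $u$ need not be elementarily equivalent to $e_1$, the weak Mennicke symbol manipulation you sketch has nothing to act on. Second, even for $u=e_1$, passing from $[(1+qa,q\mathbf b)]=[e_1]$ to $[(1+aq,\mathbf b)]=[e_1]$ is precisely the factorization $\wms(1+aq,q\mathbf b)=\wms(1+aq,\mathbf b)\wms(1+aq,q)$, i.e.\ exactly the niceness property you say you will avoid --- this is how Corollary \ref{cor:completable-affine-d} runs, and it is circular here because Lemma \ref{a} is an input to establishing group structures in the first place. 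Third, the weak Mennicke symbol group structure (Theorem \ref{wms}) for $n=3$ needs $Sd(R)\le 2$, and Theorem \ref{v} needs $Sd(R)\le 3$, whereas the lemma is stated for an \emph{arbitrary} commutative ring and is applied in Theorem \ref{witt relative} to the Excision ring $\ZZ\oplus I$ of a $3$-dimensional ring; so your hypotheses are both extraneous and, for the weak Mennicke symbol step, unavailable where the lemma is used.

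The paper's proof is entirely different and dimension-free: it first shows $\E_4(R,I)=\E^4_4(R,I)\cap\GL_4(R,I)$, so that $1\perp\alpha\in\E_4(R,I)$ is a product of generators $E_{4i}(a)$, $a\in R$, and $E_{i4}(x)$, $x\in I$. It then conjugates the Vaserstein--Suslin alternating matrix $\theta(w,u)$ (built from $u$ and a complementary row $w$) by these generators one at a time, reading off for each generator $\beta$ an explicit matrix $\hat\beta$ with $\beta\,\theta(w,u)\,\beta^t=\theta(w',u\hat\beta)$; the only nontrivial case is $\beta=E_{14}(x)$, $x\in I$, where $\hat\beta=I_2+\nu\mu$ with $\mu\nu=0$ lies in $\E_3(R,I)$ by Vaserstein's lemma. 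Multiplying these out gives $u\alpha=u\hat\alpha$ with $\hat\alpha\in\E^3_3(R,I)\cap\GL_3(R,I)=\E_3(R,I)$. If you want to salvage your outline, you should replace the appeal to Theorem \ref{v} by this $\theta$-matrix computation; as written, the proposal does not constitute a proof.
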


\begin{proof}
 Let $\E^k_n(R, I)$ be the subgroup of $\GL_n(R)$ generated by the $E_{ki}(a)$ with $a \in R, i \not= k$ and the $E_{ik}(x), x \in I, i \not= k$. In (\cite{vdK1}, Lemma 2.2) it has been shown that $\E_n(R, I) = \E^1_n(R, I) \cap \GL_n(R, I)$. Let $\sigma$ be the permutation matrix obtained by interchanging the first and $k$th row of $I_n$. Then $$\E_n(R, I) = \sigma \E_n(R, I) \sigma^{-1} = \sigma \E^1_n(R, I) \sigma^{-1} \cap \GL_n(R, I) = \E^k_n(R, I) \cap \GL_n(R, I).$$ In particular any matrix in $\E_4(R, I)$ can be expressed as product of elementary matrices of the form $E_{4i}(a); a \in R, 1 \leq i \leq 3$ and $E_{i4}(x); x \in I, 1 \leq i \leq 3$. 

We shall show that $u \alpha \in u\E_3(R, I)$. Let $u = (u_1, u_2, u_3)$ and $w = (w_1, w_2, w_3) \in \Um_3(R, I)$ be such that $u_1w_1 + u_2w_2 + u_3w_3 = 1$. Define $$\theta(w, u) = \begin{pmatrix}0 & -u_1 & -u_2 & -u_3\\ u_1 & 0 & -w_3 & w_2 \\ u_2 & w_3 & 0 & -w_1 \\ u_3 & -w_2 & w_1 & 0   \end{pmatrix}.$$ 

We have $(\begin{smallmatrix} 1 &0 \\ 0 & \alpha \end{smallmatrix}) \in \E_4(R, I)$ and $(\begin{smallmatrix} 1 &0 \\ 0 & \alpha \end{smallmatrix})^t\theta(w, u)(\begin{smallmatrix} 1 &0 \\ 0 & \alpha \end{smallmatrix}) = \theta(w', u \alpha)$, for some $w' \in \Um_3(R, I)$. Now $(\begin{smallmatrix} 1 &0 \\ 0 & \alpha \end{smallmatrix})^t \in \E_4(R, I)$ and therefore is product of elementary matrices of the form $E_{4i}(a); a \in R, 1 \leq i \leq 3$ and $E_{i4}(x); x \in I, 1 \leq i \leq 3$. Let $\beta \theta(w, u)\beta^t = \theta(w', u\hat{\beta}) $ for such elementary matrix $\beta$. We shall compute $\hat{\beta}$ when $\beta$ is elementary matrices of different type as described above.
So first assume $\beta = E_{14}(x), x \in I$. Then $$(u_1 + xw_2, u_2 - xw_1, u_3) = u \begin{pmatrix}1 + xw_1w_2 & -xw_1^2 & 0 \\ xw_2^2 & 1 - xw_1w_2 & 0 \\ x w_2w_3 & -xw_1w_3 & 1 \end{pmatrix} \in u\begin{pmatrix}I_2 +  \nu \mu & 0 \\ I \times I & 1 \end{pmatrix}$$ for $\nu = (w_1, w_2)^t$ and $\mu = (xw_2,  -xw_1)$. Note that $\mu \nu = 0$. So we choose $\hat{\beta} = \begin{pmatrix}I_2 +  \nu \mu & 0 \\ I \times I & 1 \end{pmatrix} \in \E_3(A, I)$ by (\cite{Vas5}, Lemma 1.1(b)). In the other cases finding $\hat{\beta}$ is easy. We have 
\begin{eqnarray*}
\hat{\beta} = 
\begin{cases}
I_3 & \text{when $\beta = E_{41}(a), a \in R$}\\
\gamma^t & \text{when $\beta = E_{4i}(a), i = 2, 3, a \in R$ or $E_{i4}(x), i = 2, 3, x \in I$ {\it i.e.} $\beta $ is of the form $(\begin{smallmatrix}1 & 0\\ 0 & \gamma \end{smallmatrix})$.}
\end{cases}
\end{eqnarray*}

 So we have $\theta(w', u \alpha) = (\begin{smallmatrix} 1 &0 \\ 0 & \alpha \end{smallmatrix})^t\theta(w, u)(\begin{smallmatrix} 1 &0 \\ 0 & \alpha \end{smallmatrix}) = \theta(w', u \hat{\alpha})$ where $\hat{\alpha} = \prod \hat{\beta}$. Clearly $\hat{\alpha}^t \in \E^3_3(R, I)$. It is easy to see that $\hat{\alpha}^t \in \GL_3(R, I)$ since $(\begin{smallmatrix} 1 &0 \\ 0 & \alpha \end{smallmatrix})^t = \prod \beta \in  \GL_3(R, I)$. So $\hat{\alpha}^t \in  \E_3(R, I)$ and therefore $\hat{\alpha} \in  \E_3(R, I)$. Thus $v = u \alpha = u \hat{\alpha} \in u \E_3(R, I)$.
\end{proof}

\begin{theorem}\label{witt relative} Let $R$ be any commutative ring of
dimension $3$ and $I$ an ideal in $R$ such that $\SL_4(R,I) \cap \E(R,
I) = \E_4(R, I)$. Then $\MSE_3(R, I)$ has an abelian Witt group structure
given by $Vv$ rule.
\end{theorem}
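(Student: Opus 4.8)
The plan is to realize $\MSE_3(R,I)$ as the relative universal weak Mennicke symbol $\WMS_3(R,I)$, which is an abelian group by construction, and then to transport its group law back along the symbol map. Concretely, one sets up $\WMS_3(R,I)$ via the relative analogues of the relations in Definition \ref{weak symbol}, so that there is a surjection $\wms \colon \MSE_3(R,I) \twoheadrightarrow \WMS_3(R,I)$; van der Kallen's computation (\cite{vdK2}, Lemma $3.5$), carried out relatively, shows that the product in $\WMS_3(R,I)$ is exactly the one prescribed by the $Vv$ rule. Thus everything reduces to proving that $\wms$ is \emph{injective}: for then the abelian group structure descends to $\MSE_3(R,I)$ and is given by the $Vv$ rule, which is precisely the assertion.

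The first ingredient is a prestability step. Since $\dim R = 3$ we have $Sd(R) \leq 3 = 2\cdot 3 - 3$, so Theorem \ref{v} applies with $n = 3$ to $R$ and its ideal $I$. I would use it in the standard way to obtain the relative Mennicke--Newman reduction (any two classes in $\MSE_3(R,I)$ admit representatives $(a, a_2, a_3)$ and $(b, a_2, a_3)$ sharing their last two coordinates, so that the $Vv$ rule can be applied), and to identify when two rows have equal image under $\wms$: namely exactly when they are stably relatively elementarily equivalent, i.e.\ when $(u,0)$ and $(v,0)$ lie in the same $\E_4(R,I)$-orbit in $\Um_4(R,I)$.

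The heart of the argument is the injective stability converting this stable equivalence into a genuine equivalence in $\MSE_3(R,I)$, and this is where both Lemma \ref{a} and the standing hypothesis enter. Suppose $\wms(u) = \wms(v)$, so that $(u,0)$ and $(v,0)$ are $\E_4(R,I)$-equivalent. Peeling off the vanishing last coordinate, one produces $\alpha \in \SL_3(R,I)$ with $u\alpha = v$ that is stably relatively elementary, i.e.\ $\alpha \in \SL_3(R,I) \cap \E(R,I)$. Its image $\mathrm{diag}(\alpha,1)$ then lies in $\SL_4(R,I) \cap \E(R,I)$, which by hypothesis equals $\E_4(R,I)$; hence $\alpha \in \SL_3(R,I) \cap \E_4(R,I)$. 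Lemma \ref{a} now gives $u\alpha \in u\,\E_3(R,I)$, that is $[u] = [v]$ in $\MSE_3(R,I)$. Therefore $\wms$ is injective, hence bijective.

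The step I expect to be the main obstacle is the extraction in the previous paragraph: turning an $\E_4(R,I)$-equivalence $(u,0)\gamma = (v,0)$ into a matrix $\alpha \in \SL_3(R,I) \cap \E(R,I)$ with $u\alpha = v$, while controlling relativeness throughout. This is exactly the point at which the weaker bound $Sd(R) \leq 2\cdot 3 - 3$ available here, as opposed to the $2n-4$ needed in Theorem \ref{wms}, must be compensated by the hypothesis $\SL_4(R,I) \cap \E(R,I) = \E_4(R,I)$. Once injectivity is secured, commutativity and associativity are inherited from $\WMS_3(R,I)$, the class $[e_1]$ is the identity, and inverses exist because the symbol map is a bijection onto a group; this furnishes the abelian Witt group structure on $\MSE_3(R,I)$ given by the $Vv$ rule.
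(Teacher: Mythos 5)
There is a genuine gap, and it sits exactly at the step you flag as "the main obstacle" but then treat as routine. Your whole strategy rests on the claim that $\wms(u)=\wms(v)$ if and only if $(u,0)$ and $(v,0)$ lie in the same $\E_4(R,I)$-orbit, so that injectivity of $\wms$ reduces to the descent via Lemma \ref{a}. Neither direction of that characterization is proved, and neither follows from Theorem \ref{v} or from a Mennicke--Newman reduction: identifying the fibers of the universal weak Mennicke symbol is precisely the hard content of van der Kallen's Theorem \ref{wms}, which is available only under $Sd(R)\leq 2n-4$, i.e. $d\leq 2$ for $n=3$, whereas here $d=3=2n-3$. In the same borderline range even the well-definedness of the $Vv$ product on $\MSE_3$ (independence of the choice of $p$, of the shared coordinates, etc.) is not known from \cite{vdK1,vdK2}, so you cannot "transport" a group law you have not yet shown to be well defined; and the auxiliary step of peeling a relative $\E_4$-equivalence of $(u,0)$ and $(v,0)$ down to some $\alpha\in\SL_3(R,I)\cap\E(R,I)$ with $u\alpha=v$ is itself a nontrivial stability assertion at the edge of the stable range. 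Finally, even if all of this were repaired, the $\WMS$-route would only produce an abelian group given by the $Vv$ rule, not the asserted \emph{Witt} group structure, i.e. the identification with $W_E$.

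The paper's proof takes a different route that supplies exactly the missing ingredients. It passes to the Excision ring $\ZZ\oplus I$ via Theorem \ref{excision} and invokes Suslin--Vaserstein (\cite{Sus-Vas}, Theorem 5.2 b, c), which for a ring whose maximal spectrum has dimension at most $3$ gives a bijection $\Um_3(\ZZ\oplus I)/\bigl(\SL_3(\ZZ\oplus I)\cap\E(\ZZ\oplus I)\bigr)\cong W_E(\ZZ\oplus I)$ once one verifies the injective stability $e_1\bigl(\SL_{2r}(\ZZ\oplus I)\cap\E(\ZZ\oplus I)\bigr)=e_1\E_{2r}(\ZZ\oplus I)$ for $r\geq 2$; that verification is where the hypothesis $\SL_4(R,I)\cap\E(R,I)=\E_4(R,I)$ enters, transported through excision. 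The remaining step collapses $\SL_3\cap\E$-orbits to $\E_3$-orbits using the hypothesis together with Lemma \ref{a} --- this is the one piece of your argument that matches the paper (your observation that $\mathrm{diag}(\alpha,1)\in\SL_4(R,I)\cap\E(R,I)=\E_4(R,I)$ forces $\alpha\in\SL_3(R,I)\cap\E_4(R,I)$, after which Lemma \ref{a} applies, is exactly right). The abelian Witt group structure, and hence the $Vv$ rule, then comes from $W_E(\ZZ\oplus I)$ via \cite{Sus-Vas}, Theorem 5.2 a, not from the universal weak Mennicke symbol.
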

\begin{proof} Since $R$ has dimension $3$ and $\SL_4(R,I) \cap \E(R,
I) = \E_4(R, I)$ the natural map $$\SL_n(R,I)/ \E_n(R, I) \rightarrow
\SK_1(R, I) $$ is a bijection whenever $n \geq 4$. Now the maximal
spectrum of the Excision algebra $\ZZ \oplus I$ is the union of
finitely many subspaces of dimension at most $3$ (See \cite{vdK1},
3.19). So we have \[e_1\SL_{2r +1}(\ZZ \oplus I) = \Um_{2r+1}(\ZZ
\oplus I) \,\text{for all}\, r \geq 2.\]

Now assume $v \in \Um_{2r}(\ZZ \oplus I)$, $r \geq 2$ which is stably
elementary equivalent to $e_1$. By elementary operations if necessary
we may assume that $v = e_1 \, \vpmod I$. Thus $v \alpha = e_1$ for some $\alpha \in \SL_{2r}(\ZZ \oplus I) \cap \E(\ZZ \oplus I)$. Going modulo $0 \oplus I$ we have $e_1\overline{\alpha} = e_1$ for $\overline{\alpha} \in \E_{2r}(\ZZ)$. Replacing $\alpha$ by $\alpha \overline{\alpha}^{-1}$ we may assume that $\alpha \in \SL_{2r}(\ZZ \oplus I, 0\oplus I) \cap \E(\ZZ \oplus I, 0 \oplus I)$ (see Lemma \ref{lem:august15}).

Let $\widetilde{v}, \widetilde{\alpha}$ be the
image in $\Um_{2r}(R, I), \SL_{2r}(R, I)$ respectively under the maps induced by $f$ (see Definition \ref{ering}). Then $\widetilde{v}\widetilde{\alpha} = e_1 $, $\widetilde{\alpha} \in \SL_{2r}(R, I) \cap \E(R, I)$. So  $\widetilde{\alpha} \in \E_{2r}(R, I)$ by given hypothesis and   
$\widetilde{v}$ is trivial in $\MSE_{2r}(R,
I)$. Then by Excision Theorem \ref{excision}, $v$ is also trivial in
$\MSE_{2r}(\ZZ \oplus I)$ i.e. $v \in e_1 \E_{2r}(\ZZ \oplus I)$. Thus
we have \[e_1(SL_{2r}(\ZZ \oplus I) \cap \E(\ZZ \oplus I)) = e_1
\E_{2r}(\ZZ \oplus I),\, \text{whenever}\, r \geq 2.\]

Now (\cite{Sus-Vas}, Theorem 5.2 b, c) shows that $\Um_3(\ZZ \oplus
I)/ \SL_3(\ZZ \oplus I) \cap \E(\ZZ \oplus I) = W_E(\ZZ \oplus I)
$. We claim that $\SL_3(\ZZ \oplus I) \cap \E(\ZZ \oplus I)$ and
elementary orbits are same in $\Um_3(\ZZ \oplus I)$. Choose $u, v \in
\Um_3(\ZZ \oplus I)$ such that $u \alpha = v$ for some $\alpha \in
\SL_3(\ZZ \oplus I) \cap \E(\ZZ \oplus I)$. By elementary action ({\it viz.} $\E_3(\ZZ)$ action) if necessary we may assume that  $u, v \in
\Um_3(\ZZ \oplus I, 0 \oplus I)$ and $\alpha \in \SL_3(\ZZ \oplus I, 0 \oplus I) \cap \E(\ZZ \oplus I, 0 \oplus I) $. Taking images in $\Um_3(R,
I)$ as earlier we have $\widetilde{u} \widetilde{\alpha} =
\widetilde{v}$ where $\widetilde{\alpha} = \SL_3(R, I) \cap \E(R, I) =
\SL_3(R, I) \cap \E_4(R, I)$. Then by Lemma \ref{a}, $[\widetilde{v}] =
[\widetilde{u}]$ in $\MSE_3(R, I)$ and therefore $[v] = [u]$ in
$\MSE_3(\ZZ \oplus I)$ by Excision Theorem \ref{excision}.  So we have $\MSE_3(\ZZ \oplus I) = W_E(\ZZ
\oplus I)$. Hence $\MSE_3(R, I) = \MSE_3(\ZZ \oplus I)$ has a group
structure given by Vaserstein's rule (\cite{Sus-Vas}, Theorem 5.2.a).
\end{proof}

\begin{remark}\label{gs3} Lemma \ref{ris} and Theorem \ref{witt relative} show
that $\MSE_3(A, I)$ has an abelian Witt group structure (in particular, 
satisfies $Vv$ rule) whenever
$A$ is a non-singular affine algebra of dimension $3$ and $I$ a principal ideal
in $A$.
\end{remark}

\section{A nice group structure on $\Um_{d}(A)/\E_{d}(A)$}  

We first recall the following result in \cite{vdK-Rao}.
\begin{theorem}\label{rao1}{\rm(\cite{vdK-Rao} Corollary
3.5)} Let $A$ be a regular affine algebra of Krull dimension $3$ over
a $C_1$ field $k$ which is perfect  if its characteristic is $2$ or
$3$. Then the Vaserstein Symbol $V: \Um_3(A)/ \E_3(A) \longrightarrow
W_E(A)$ is an isomorphism.  
\end{theorem}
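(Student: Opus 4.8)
The plan is to realize the Vaserstein symbol as a composite in which the only genuinely hard ingredient is an injective-stability statement. For any commutative ring the symbol is invariant under the (stable) action of $\SL_3\cap\E$, so it factors through the coarser orbit space; and by (\cite{Sus-Vas}, Theorem $5.2$(b,c)), when $\dim A = 3$ (more precisely, when $\max(A)$ is a finite union of noetherian subspaces of dimension at most $3$) this factored map is a bijection
\[
\Um_3(A)/(\SL_3(A)\cap\E(A)) \;\xrightarrow{\ \cong\ }\; W_E(A),
\]
with the abelian Witt group structure on the target given by Vaserstein's rule (5.2(a)). Since $\E_3(A)\subseteq \SL_3(A)\cap\E(A)$, the symbol on $\Um_3(A)/\E_3(A)$ is the composite of the canonical surjection $\Um_3(A)/\E_3(A)\twoheadrightarrow \Um_3(A)/(\SL_3(A)\cap\E(A))$ with this isomorphism. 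Hence surjectivity of $V$ is free, and $V$ is an isomorphism if and only if this surjection is a bijection, i.e. if and only if the $\E_3(A)$-orbits and the $(\SL_3(A)\cap\E(A))$-orbits on $\Um_3(A)$ coincide. This is exactly the absolute analogue of the mechanism used in the proof of Theorem \ref{witt relative}, and it reduces the whole statement to comparing these two equivalence relations.

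To run the comparison I would first descend from the stable group to level $4$. Suppose $v\sigma = w$ with $\sigma\in\SL_3(A)\cap\E(A)$. Then $\sigma\oplus 1\in\SL_4(A)$ and, since $\sigma$ is stably elementary, $\sigma\oplus 1$ is stably elementary too, so $\sigma\oplus 1\in\SL_4(A)\cap\E(A)$. This is precisely where regularity of $A$ and the hypotheses on $k$ are consumed, through the injective-stability theorem that over such an $A$ a stably elementary $4\times4$ matrix is already elementary, namely
\[
\SL_4(A)\cap\E(A)=\E_4(A)
\]
(this is (\cite{vdK-Rao}, Theorem $3.4$), whose relative, algebraically closed, principal-ideal version appears here as Lemma \ref{ris}). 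Consequently $\sigma\oplus 1\in\E_4(A)$, i.e. $\sigma\in\SL_3(A)\cap\E_4(A)$.

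It then remains to check that an $\SL_3(A)\cap\E_4(A)$-transformation cannot move a unimodular row out of its elementary orbit, which is the absolute specialization ($I=A$) of Lemma \ref{a}: conjugating the alternating matrix $\theta(w,u)$ by $\mathrm{diag}(1,\sigma)$, written as a product of the two permitted types of generators of $\E_4(A)$, exhibits $v\sigma$ as $v\hat\sigma$ for some $\hat\sigma\in\E_3(A)$; thus $w\in v\E_3(A)$. Combining the two steps gives that the $(\SL_3(A)\cap\E(A))$-orbit of every row equals its $\E_3(A)$-orbit, so the surjection above is a bijection and $V$ is bijective. The bijection is automatically a group isomorphism, since the group law carried by $\Um_3(A)/\E_3(A)$ is by construction the one induced from $W_E(A)$ (equivalently, the law given by Vaserstein's rule).

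The main obstacle is the single input $\SL_4(A)\cap\E(A)=\E_4(A)$; everything else is the formal factorization together with the elementary-symplectic bookkeeping of Lemma \ref{a}. That stability statement is where the geometry genuinely enters: regularity is needed for the homotopy invariance that collapses the difference between $\SL_4/\E_4$ and $\SK_1$, while the $C_1$ hypothesis (cohomological dimension at most $1$, with perfectness imposed in characteristic $2$ and $3$) is what forces the obstruction to the elementariness of a stably elementary $4\times4$ matrix to vanish. I would therefore expect essentially all of the effort to go into establishing that one stability theorem, and would treat the Vaserstein-symbol isomorphism itself as its comparatively formal corollary.
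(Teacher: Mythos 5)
The paper does not prove this statement at all: Theorem \ref{rao1} is recalled verbatim as Corollary 3.5 of \cite{vdK-Rao}, so there is no in-paper proof to compare against. Your reconstruction is nevertheless essentially the right one, and it coincides with the strategy the authors themselves use for the relative analogue (Theorem \ref{witt relative} together with Lemma \ref{a}): factor $V$ through $\Um_3(A)/(\SL_3(A)\cap\E(A))$, invoke (\cite{Sus-Vas}, Theorem 5.2) to identify that quotient with $W_E(A)$, and then collapse the $(\SL_3(A)\cap\E(A))$-orbits to $\E_3(A)$-orbits by stabilizing to $\SL_4(A)\cap\E(A)=\E_4(A)$ and descending via the $\theta(w,u)$ computation of Lemma \ref{a} with $I=A$. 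Two caveats. First, a logical reordering: Suslin--Vaserstein 5.2(b,c) is not a consequence of $\dim A\leq 3$ alone; its hypotheses are the stability conditions $e_1\SL_{2r+1}(A)=\Um_{2r+1}(A)$ and $e_1(\SL_{2r}(A)\cap\E(A))=e_1\E_{2r}(A)$ for $r\geq 2$, and the $r=2$ case of the second is exactly where $\SL_4(A)\cap\E(A)=\E_4(A)$ enters (this is how the paper's own proof of Theorem \ref{witt relative} is organized: the stability conditions are verified first, then 5.2(b,c) is applied). So the injective stability theorem must be consumed \emph{before} you may quote 5.2(b,c), not afterwards. Second, and more substantively, your argument is a derivation of Corollary 3.5 from Theorem 3.4 of \cite{vdK-Rao}; the statement $\SL_4(A)\cap\E(A)=\E_4(A)$ for a regular affine threefold over a $C_1$ field is precisely the hard content of that paper and is left entirely as a black box here. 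You acknowledge this, and as a reduction your proposal is correct and is how the result is obtained in the literature; but as a self-contained proof of the quoted theorem it is incomplete at exactly the point where, as you say yourself, the geometry genuinely enters.
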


\begin{remark}\label{igs} 
Let $A$ be a non-singular affine algebra of
dimension $d, d \geq 3$. When $d=3$, above Theorem together with
$($\cite{Sus-Vas}, Theorem 5.2.a$) $ says that $\Um_3(A)/ \E_3(A)$ has a
group structure given by $Vv$ rule. Theorem \ref{wms} says so when $d
\geq 4$. Therefore $\MSE_{d}(A)$ has a group operation $\ast$ defined on
it given by $Vv$ rule whenever $d \geq 3$.
\end{remark}

\begin{lemma}\label{rao}{\rm(\cite{vdK-Rao} Theorem 5.1)}

Let $A$ be a smooth affine algebra of dimension $d \geq 3$ over a
perfect $C_1$ field. Let $1 \leq k \leq d$. Let $v = (v_1, v_2,
\ldots, v_d) \in \Um_d(A)$ and let $T$ be a $k \times k$ matrix over
$A$ with first row $u = (u_1, u_2, \ldots, u_k)$ such that
$\overline{det(T)}$ is a square of a unit in $A/(v_{k+1}, \ldots,
v_d)$. Then $$[(v_1, v_2, \ldots, v_k)\cdot T, v_{k+1}, \ldots, v_d] =
[v] + [u,v_{k+1}, \ldots, v_d].$$ In particular, taking $k=d$, we have
$[v\cdot g] = [v] + [e_1g]$, for $g \in SL_d(A)$.

\end{lemma}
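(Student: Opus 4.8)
The plan is to prove the displayed identity by using the abelian group structure on $\MSE_d(A)$ supplied by Remark \ref{igs} and realising every class through the universal weak Mennicke symbol of Definition \ref{weak symbol} and Theorem \ref{wms}, so that $[\,\cdot\,]$ is genuinely additive and I may compute with the $Vv$-product formula recalled just before Theorem \ref{wms}. The first observation I would exploit is that right multiplication of $T$ by an elementary matrix $E\in\E_k(A)$ leaves \emph{both} sides of the claim unchanged: lifting $E$ to an element of $\E_d(A)$ fixing $v_{k+1},\ldots,v_d$ shows $[(v_1,\ldots,v_k)TE,v_{k+1},\ldots,v_d]=[(v_1,\ldots,v_k)T,v_{k+1},\ldots,v_d]$, while the first row transforms as $u\mapsto uE$, so $[uE,v_{k+1},\ldots,v_d]=[u,v_{k+1},\ldots,v_d]$ and the determinant is untouched. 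Thus I am free to prenormalise $T$ by column operations.

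Next I would reduce $T$ modulo the ideal $(v_{k+1},\ldots,v_d)$. Setting $\bar A=A/(v_{k+1},\ldots,v_d)$, the hypothesis makes $\overline{\det T}=\bar\epsilon^2$ a unit, so $(\bar v_1,\ldots,\bar v_k)$ and the first row $\bar u$ of $\bar T$ are unimodular over $\bar A$, and $(u,v_{k+1},\ldots,v_d)\in\Um_d(A)$. Using the structure of $\SL_k(\bar A)$ afforded by the dimension bound, I would write $\bar T$, up to elementary column factors, as the scaling $\mathrm{diag}(\overline{\det T},1,\ldots,1)$; lifting these elementary factors to $A$ and clearing the resulting error terms (which lie in $(v_{k+1},\ldots,v_d)$) against the genuine tail coordinates $v_{k+1},\ldots,v_d$, the left-hand side collapses to $[v_1\det T,\,v_2,\ldots,v_d]$. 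The same clearing applied to the first row, together with the fact that $\det T$ is a unit modulo $(v_{k+1},\ldots,v_d)$, gives $[u,v_{k+1},\ldots,v_d]=[\det T,0,\ldots,0,v_{k+1},\ldots,v_d]=[\det T,\,v_2,\ldots,v_d]$. Hence the whole statement collapses to the single length-one relation
\[
[v_1 t,\, v_2, \ldots, v_d] \;=\; [v] \,+\, [t,\, v_2, \ldots, v_d],
\]
to be proved whenever $\bar t=\bar\epsilon^2$ is a square in $\bar A$. (The ``in particular'' case $k=d$, $g\in\SL_d(A)$, is the instance $\det g=1=1^2$, with no tail, giving $[vg]=[v]+[e_1 g]$.)

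For this last relation I would choose $p$ with $v_1 p\equiv 1 \pmod{(v_2,\ldots,v_d)}$, expand $[t,v_2,\ldots,v_d]+[v]$ by the $Vv$-product formula into $[(v_1(t+p)-1,\,v_2(t+p),\,v_3,\ldots,v_d)]$, and try to show that this row is elementarily equivalent to $(v_1 t,v_2,\ldots,v_d)$, the correction terms being absorbed precisely because $\bar t$ is a square; the second defining relation of $\WMS_d$ in Definition \ref{weak symbol} is the tool I expect to need to push the square through.

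The genuine obstacle is exactly this square-multiplicativity step. At length $n=d$ one is below the stable range, so multiplicativity of the class in its first coordinate is \emph{false} in general, and it is the squaring that must rescue it; making the $Vv$ correction terms vanish is where the smoothness of $A$ and the $C_1$ hypothesis on $k$ have to enter, rather than in the formal column-reduction above. Concretely I would expect to invoke the Vaserstein symbol isomorphism of Theorem \ref{rao1} in dimension $3$, and in higher dimension the symplectic input (a square determinant being the signature of a factor on which the symbol behaves multiplicatively), so that $[a^2 b,v_2,\ldots,v_d]=2[a,v_2,\ldots,v_d]+[b,v_2,\ldots,v_d]$ holds in $\MSE_d(A)$. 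I anticipate the entire arithmetic content to be concentrated there, with the reduction to it being essentially formal.
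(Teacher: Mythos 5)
The paper does not actually prove this statement: Lemma \ref{rao} is imported verbatim from (\cite{vdK-Rao}, Theorem 5.1) and used as an external input, so there is no internal proof to compare yours against. Judged on its own terms, your proposal has a genuine gap at the column-reduction step. You propose to write $\overline{T}$, up to elementary column factors over $\overline{A}=A/(v_{k+1},\ldots,v_d)$, as $\mathrm{diag}(\overline{\det T},1,\ldots,1)$; this amounts to asserting that $\overline{T}\cdot\mathrm{diag}(\overline{\det T}^{\,-1},1,\ldots,1)$ lies in $\E_k(\overline{A})$, i.e. that $\SL_k(\overline{A})=\E_k(\overline{A})$. That is false in the relevant range: $\overline{A}$ can have dimension up to $d$, and already for $k=2$ over a smooth affine surface one has $\SL_2\neq\E_2$ --- the failure that forces the symplectic group and Theorem \ref{fasel} into this paper in the first place. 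The \emph{in particular} case exposes the problem most starkly: there $k=d$, $\overline{A}=A$, $\det g=1$, and your reduction would replace $g$ by an elementary matrix, for which both sides of $[vg]=[v]+[e_1g]$ are trivially $[v]$ and $[v]+0$. The entire content of that assertion concerns $g\in\SL_d(A)\setminus\E_d(A)$, so the reduction cannot be repaired by a cleverer normal form; the non-elementary part of $T$ must be confronted directly.

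Moreover, the relation to which you reduce everything, $[a^2b,v_2,\ldots,v_d]=2[a,v_2,\ldots,v_d]+[b,v_2,\ldots,v_d]$, is asserted rather than proved; it is essentially the $k=1$ case of the lemma itself, and it is exactly where the smoothness and $C_1$ hypotheses (and, in dimension $3$, the Vaserstein symbol of Theorem \ref{rao1}) must do their work, as you correctly sense. The argument of \cite{vdK-Rao} proceeds instead by induction on $k$: the case $k=1$ is the square-unit identity for weak Mennicke symbols, the case $k=2$ absorbs a $2\times 2$ block of square determinant using $\SL_2$/symplectic input, and the inductive step peels off one row of $T$ at a time without ever assuming $T$ is elementarily diagonalizable. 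As written, your proposal establishes the lemma only for those $T$ whose reduction modulo $(v_{k+1},\ldots,v_d)$ is a diagonal matrix times an elementary one, which is a strictly weaker statement and, for $k=d$ with $g\in\SL_d(A)$, an empty one.
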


\begin{theorem}\label{fasel}{\rm(\cite{Fasel1} Lemma 3.3)}
Let $S$ be a smooth affine surface over an algebraically closed field
of characteristic different from $2, 3$. Then we have $\SL_2(S) \cap
\E(S) = \SL_2(S) \cap \E_3(S) = \SL_2(S) \cap \ESp_4(S) = \SL_2(S)
\cap \ESp(S)$.
\end{theorem}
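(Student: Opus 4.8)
The plan is to exploit the identification $\SL_2(S) = \Sp_2(S)$ and to reduce the four-term equality to a short cycle of containments. Three containments are formal: since $\E_3(S) \subseteq \E(S)$, since $\ESp_4(S) \subseteq \ESp(S)$, and since every elementary symplectic generator is a product of linear elementary matrices (so $\ESp(S) \subseteq \E(S)$), intersecting with $\SL_2(S)$ gives
$$\SL_2(S) \cap \E_3(S) \subseteq \SL_2(S) \cap \E(S), \qquad \SL_2(S) \cap \ESp_4(S) \subseteq \SL_2(S) \cap \ESp(S) \subseteq \SL_2(S) \cap \E(S).$$
Thus it is enough to prove the two reverse inclusions
$$\SL_2(S) \cap \E(S) \subseteq \SL_2(S) \cap \E_3(S), \qquad \SL_2(S) \cap \E(S) \subseteq \SL_2(S) \cap \ESp_4(S);$$
the first forces $\SL_2(S)\cap\E_3(S) = \SL_2(S)\cap\E(S)$, while the chain $\SL_2\cap\E \subseteq \SL_2\cap\ESp_4 \subseteq \SL_2\cap\ESp \subseteq \SL_2\cap\E$ produced by the second collapses the remaining three terms. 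Both reverse inclusions are injective-stability statements at the sharp level $d+1 = 3$ (respectively the symplectic size $4$), and this is where smoothness, algebraic closedness and $\mathrm{char}\,k \neq 2,3$ must be spent.

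For the linear inclusion I would first establish $\SL_3(S)\cap\E(S) = \E_3(S)$ and then intersect with $\SL_2(S)$: if $\alpha \in \SL_2(S)\cap\E(S)$ then $\alpha\perp 1 \in \SL_3(S)\cap\E(S) = \E_3(S)$. To obtain this level-$3$ equality I would route through the Vaserstein symbol. By Suslin--Vaserstein (\cite{Sus-Vas}, Theorem $5.2$) one has $\Um_3(S)/(\SL_3(S)\cap\E(S)) = W_E(S)$, and the Vaserstein symbol $V:\Um_3(S)/\E_3(S)\to W_E(S)$ is the resulting comparison map; its bijectivity for a smooth affine surface over an algebraically closed field (where $\mathrm{char}\neq 2$ enters through the alternating-form description of $W_E$) means that $\E_3(S)$-orbits and $(\SL_3(S)\cap\E(S))$-orbits on $\Um_3(S)$ coincide. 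Feeding this into the alternating-matrix device of Lemma \ref{a}, whose $\theta(w,u)$ is exactly the Vaserstein form and which converts the action of a matrix in $\SL_3\cap\E_4$ on a length-$3$ row into an $\E_3$-action, upgrades the orbit equality to the group equality $\SL_3(S)\cap\E(S) = \E_3(S)$.

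The symplectic inclusion is the crux, and here even the descent to the finite symplectic size $4$ is part of the content rather than a consequence of general stability (for $\dim S = 2$ the standard symplectic injective-stability bound does not reach size $4$). Through $\SL_2 = \Sp_2$ the goal is to realise $\alpha\perp I_2$, for $\alpha$ trivial in $\SK_1(S) = \SL(S)/\E(S)$, as an \emph{elementary symplectic} matrix of size $4$. The natural vehicle is the $4\times 4$ alternating Vaserstein form $\theta(w,u)$ of Lemma \ref{a}: conjugating it by $\mathrm{diag}(1,\alpha)$ ties the linear elementary action at size $4$ to a manipulation of alternating forms, and over a smooth affine surface this form can be brought into standard position by elementary symplectic transvections precisely when the class of $\alpha$ vanishes in the elementary symplectic Witt group $W_E(S)$. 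I would therefore (i) observe that $\alpha \in \SL_2(S)\cap\E(S)$ has trivial class in $W_E(S)$, which is the linear vanishing repackaged through the Vaserstein symbol and is already available from the previous step, and (ii) read off from the triviality of this alternating form an explicit size-$4$ elementary symplectic factorisation, i.e.\ $\alpha\perp I_2\in\ESp_4(S)$. The passage (ii) is the delicate point: it demands that the symplectic and linear elementary orbits on the relevant alternating forms coincide, which is where $\mathrm{char}\neq 2,3$ and algebraic closedness are indispensable, the $2$-divisibility from $\mathrm{char}\neq 2$ absorbing the factor of $2$ in the hyperbolic/forgetful comparison $\K_1\Sp(S)\to\SK_1(S)$ and $\mathrm{char}\neq 3$ with algebraic closedness removing the residual small torsion.

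The main obstacle is precisely this symplectic bridge: showing that vanishing in $\SK_1(S)$ forces vanishing symplectically, realised at size $4$, on the classes coming from $\Sp_2(S)$ — equivalently, that the Vaserstein symbol and its symplectic refinement detect the same thing on these classes. This rests on a genuine computation of (or vanishing statement for) the Witt and $K$-cohomology groups of the surface and cannot be bypassed; by contrast, the stability reductions and the manipulations with elementary and elementary-symplectic generators in the other steps, including the invocation of Lemma \ref{a}, are essentially bookkeeping once the symbol machinery is in place.
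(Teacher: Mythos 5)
This statement is not proved in the paper at all: it is quoted verbatim from Fasel (\cite{Fasel1}, Lemma 3.3) and used as a black-box input, so there is no in-paper argument to compare yours against. Judged on its own terms, your proposal gets the formal bookkeeping right (the containments $\E_3\subseteq\E$, $\ESp_4\subseteq\ESp\subseteq\E$, and the reduction to the two reverse inclusions), but it does not constitute a proof, for two reasons.

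First, your route to $\SL_3(S)\cap\E(S)=\E_3(S)$ is circular. Bijectivity of the Vaserstein symbol gives an equality of \emph{orbits} on $\Um_3(S)$ (and for a surface over an algebraically closed field both orbit spaces are in fact trivial, since $\Um_3(S)=e_1\E_3(S)$); to promote this to the \emph{group} equality you take $\alpha\in\SL_3(S)\cap\E(S)$, move $e_1\alpha$ back to $e_1$ by an element of $\E_3(S)$, and are left with a matrix of the form $1\perp\beta$ (up to clearing a column) with $\beta\in\SL_2(S)\cap\E(S)$ --- which is exactly the class of matrices the theorem is about. The correct move is to invoke the improved injective stability theorem $\SL_{d+1}(A)\cap\E(A)=\E_{d+1}(A)$ for nonsingular affine algebras over perfect $C_1$ fields (\cite{vdK-Rao}) as an external input, not to rederive it from the symbol. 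Second, and more seriously, the symplectic inclusion $\SL_2(S)\cap\E(S)\subseteq\SL_2(S)\cap\ESp_4(S)$, which you correctly identify as the crux, is left entirely to ``a genuine computation of Witt and $K$-cohomology groups'' that you do not perform. That computation --- comparing the linear and symplectic elementary Witt-group classes attached to $\alpha$, using the $K$-cohomological description of $W_E$ and the divisibility/torsion arguments where ${\rm char}\,k\neq 2,3$ and algebraic closedness actually enter --- \emph{is} Fasel's lemma. As written, your argument reduces the statement to itself plus an unproven assertion, so it should either carry out that computation or cite \cite{Fasel1} for it, as the paper does.
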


We shall need the following relative singular version of J. Fasel's observation 
which can be deduced via (\cite{Fasel}, Corollary $4.2$ and Corollary $5.3$).

\begin{lemma} $($\cite{fsr2}$)$ \label{sp4}
Let $A$ be an affine threefold over an algebraically 
closed field. Let $I$ be an ideal of $A$. Then $\Um_4(A, I) = e_1\Sp_4(A, I)$.
\end{lemma}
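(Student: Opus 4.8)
The plan is to deduce the relative statement over $(A,I)$ from the corresponding \emph{absolute} statement over the Excision ring $B := A \oplus I$, exploiting that the quotient $B \twoheadrightarrow A$ with kernel $0 \oplus I$ admits a ring-theoretic section. The inclusion $e_1\Sp_4(A,I) \subseteq \Um_4(A,I)$ is clear, since a relative symplectic matrix sends $e_1$ to a unimodular row congruent to $e_1$ modulo $I$; the content is the reverse inclusion.

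First I would record that, by Proposition \ref{prop:Keshari-affine}, $B = A\oplus I$ is again an affine algebra of dimension $3$ over the algebraically closed field $k$ (in general singular, even when $A$ is smooth, which is exactly why the singular form of Fasel's observation is needed). Applying the absolute statement extracted from (\cite{Fasel}, Corollaries $4.2$ and $5.3$) to $B$ then yields $\Um_4(B) = e_1\Sp_4(B)$.

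Next I would promote this to the relative statement $\Um_4(B, 0\oplus I) = e_1\Sp_4(B, 0\oplus I)$ by a retraction correction. Let $\pi_1 : B \to A$, $(a,i)\mapsto a$, with section $s : A \to B$, $a \mapsto (a,0)$, so that $\ker \pi_1 = 0\oplus I$ and $\pi_1 \circ s = \mathrm{id}$. Given $v \in \Um_4(B, 0\oplus I)$, write $v = e_1\sigma$ with $\sigma \in \Sp_4(B)$ and set $\bar\sigma = \pi_1(\sigma) \in \Sp_4(A)$; reducing $v = e_1\sigma$ modulo $0\oplus I$ gives $e_1\bar\sigma = e_1$, hence $e_1\, s(\bar\sigma)^{-1} = e_1$. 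Then $\tau := s(\bar\sigma)^{-1}\sigma$ lies in $\Sp_4(B)$, reduces to the identity modulo $0\oplus I$ (so $\tau \in \Sp_4(B, 0\oplus I)$), and still satisfies $e_1\tau = v$.

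Finally I would descend along the excision homomorphism $f : B \to A$, $f(a,i) = a+i$, which carries $0\oplus I$ onto $I$. Every $v \in \Um_4(A,I)$ admits a lift $\tilde v \in \Um_4(B, 0\oplus I)$ with $f(\tilde v) = v$: lifting coordinatewise (the first coordinate $1+i_1$ to $(1,i_1)$ and each $i_j$, $j\ge 2$, to $(0,i_j)$), one checks unimodularity of $\tilde v$ by lifting a completion $w$ of $v$ in the same way and computing $\langle \tilde v,\tilde w\rangle = (1,0)$. Writing $\tilde v = e_1\tau$ with $\tau \in \Sp_4(B, 0\oplus I)$ and applying the ring homomorphism $f$ gives $v = e_1\, f(\tau)$ with $f(\tau)\in \Sp_4(A,I)$, which proves $\Um_4(A,I)\subseteq e_1\Sp_4(A,I)$. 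The only genuine mathematical input is the absolute (singular) symplectic transitivity over the threefold $B$ supplied by Fasel; within the above deduction the one step that requires real verification is the unimodularity of the lift $\tilde v$, together with the bookkeeping ensuring that every correction stays inside the relative symplectic group.
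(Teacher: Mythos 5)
The paper does not actually prove this lemma: it is imported wholesale from \cite{fsr2}, with only the remark that it ``can be deduced via (\cite{Fasel}, Corollary 4.2 and Corollary 5.3)''. Your excision reduction is therefore a reconstruction rather than a rival to an argument in the text, and it is sound: the coordinatewise lift $\tilde v$ of $v=(1+i_1,i_2,i_3,i_4)$ is unimodular because a relative completion $w$ of $v$ lifts the same way to $\tilde w$ with $\langle \tilde v,\tilde w\rangle=(1,0)$; the correction $\tau=s(\bar\sigma)^{-1}\sigma$ does land in the principal congruence subgroup $\Sp_4(B,0\oplus I)$, which is the sense in which $\Sp_4(A,I)$ is used downstream (e.g.\ in the proof of Lemma \ref{lift}, where $\delta\in\Sp_4(A,a)$ must reduce to the identity modulo $a$); and pushing forward along $f$ preserves both the symplectic condition and the congruence condition. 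The entire weight of the argument sits, as you say, on the absolute statement $\Um_4(B)=e_1\Sp_4(B)$ for the excision ring $B=A\oplus I$, and you are right to insist on the singular form: $B$ is the double $A\times_{A/I}A$, which is singular along $V(I)$ even when $A$ is smooth (this is exactly the obstruction the authors acknowledge when they say they cannot apply the relative niceness criterion in Section 7). Since Fasel's Duke corollaries as literally stated concern smooth threefolds, that absolute singular symplectic transitivity is itself part of what \cite{fsr1}, \cite{fsr2} supply; your proof correctly reduces the relative singular statement to it, and is very likely the intended deduction, but you should present that absolute singular input as the imported theorem rather than as an immediate consequence of \cite{Fasel} alone.
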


\begin{lemma}\label{lift}  Let $A$ be an affine algebra of dimension
$3$ over an algebraically closed field $k$ of characteristic different
from $2, 3$ and let $a \in A$ be such that  $A/(a)$ is smooth and
${\rm dim}(A/(a)) = 2$. Assume that $\Um_4(A, (a)) = e_1\Sp_4(A,
(a))$. If $\overline{\sigma} \in \SL_2(A/(a)) \cap \E_3(A/(a))$ then
it has a lift $\sigma \in \SL_2(A)$. 
\end{lemma}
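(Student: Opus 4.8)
The plan is to realize $\overline{\sigma}$ as a $2\times 2$ block of a symplectic matrix over $A/(a)$ that lifts to $A$, to use the relative hypothesis to rectify its first row to exactly $e_1$, and then to descend from $\Sp_4$ back to $\SL_2$ using the alternating form.

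First I would move into the symplectic world on the surface. Since $A/(a)$ is a smooth affine surface over the algebraically closed field $k$ of characteristic $\neq 2,3$, Theorem \ref{fasel} applies and gives $\SL_2(A/(a))\cap\E_3(A/(a)) = \SL_2(A/(a))\cap\ESp_4(A/(a))$. Hence the hypothesis $\overline{\sigma}\in\SL_2(A/(a))\cap\E_3(A/(a))$ yields $\overline{\sigma}\perp I_2\in\ESp_4(A/(a))$; conjugating by the symplectic permutation interchanging the two hyperbolic pairs (which normalizes $\ESp_4$, as it merely relabels the elementary symplectic generators) we also obtain $I_2\perp\overline{\sigma}\in\ESp_4(A/(a))$. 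Because $A\to A/(a)$ is surjective, every elementary symplectic generator over $A/(a)$ lifts to one over $A$, so $\ESp_4(A)\to\ESp_4(A/(a))$ is onto and I may choose $\varepsilon\in\ESp_4(A)$ with $\overline{\varepsilon}=I_2\perp\overline{\sigma}$.

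Next I would rectify the first row. Since $\overline{\varepsilon}=I_2\perp\overline{\sigma}$ fixes $e_1$, the row $v:=e_1\varepsilon$ satisfies $v\equiv e_1\pmod{(a)}$, so $v\in\Um_4(A,(a))$. By the hypothesis $\Um_4(A,(a))=e_1\Sp_4(A,(a))$ there is $\delta\in\Sp_4(A,(a))$ with $v=e_1\delta$. Put $\mu:=\varepsilon\delta^{-1}\in\Sp_4(A)$. Then $e_1\mu=e_1\varepsilon\delta^{-1}=e_1\delta\delta^{-1}=e_1$, using $e_1\varepsilon=e_1\delta$, while $\overline{\mu}=\overline{\varepsilon}\,\overline{\delta}^{\,-1}=I_2\perp\overline{\sigma}$ because $\delta\equiv I_4\pmod{(a)}$. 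Finally I would descend from $\Sp_4$ to $\SL_2$. With the standard alternating form pairing $e_1$ with $e_2$ and $e_3$ with $e_4$, the identity $\langle e_1\mu, e_j\mu\rangle=\langle e_1,e_j\rangle$ together with $e_1\mu=e_1$ forces $\mu_{j2}=\delta_{j2}$ for all $j$, i.e. the second column of $\mu$ equals $e_2^{t}$. Thus the first row of $\mu$ is $e_1$ and its second column is $e_2^{t}$, so a double cofactor expansion gives $\det\mu=\det N$, where $N$ is the lower right $2\times 2$ block of $\mu$. As $\mu$ is symplectic, $\det\mu=1$, whence $N\in\SL_2(A)$; and $\overline{N}$ is the lower right block of $\overline{\mu}=I_2\perp\overline{\sigma}$, namely $\overline{\sigma}$. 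Therefore $\sigma:=N$ is the desired lift.

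The routine points are the surjectivity of $\ESp_4(A)\to\ESp_4(A/(a))$ and the cofactor computation; the step deserving care is the passage $\overline{\sigma}\perp I_2\rightsquigarrow I_2\perp\overline{\sigma}$ inside $\ESp_4$, and above all the symplectic descent of the last paragraph, where the alternating form is exactly what converts ``first row $e_1$'' into ``second column $e_2$'' and thereby lets us read off a genuine determinant-one $2\times 2$ lift rather than merely an $\SL_3$ or $\SL_4$ one. I expect this descent to be the main obstacle: without the symplectic structure one only gets a completion in a larger linear group and cannot cut the size down to $2$. The relative hypothesis $\Um_4(A,(a))=e_1\Sp_4(A,(a))$ is used precisely, and only, to replace the approximate-identity first row of $\varepsilon$ by the exact $e_1$, which is what makes the descent produce a matrix reducing to $\overline{\sigma}$ itself.
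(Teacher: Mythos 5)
Your proof is correct and follows essentially the same route as the paper's: Fasel's theorem to pass to $\ESp_4(A/(a))$, lifting of elementary symplectic generators along $A\twoheadrightarrow A/(a)$, the hypothesis $\Um_4(A,(a))=e_1\Sp_4(A,(a))$ to correct one row to an exact standard basis vector, and the alternating form to read off a $2\times 2$ block in $\SL_2(A)$. The only differences are cosmetic: the paper works with $\overline{\sigma}\perp I_2$ and the vector $e_4$ rather than your $I_2\perp\overline{\sigma}$ and $e_1$, and it leaves the final block-form computation as ``easy to see,'' which you spell out via the form identity $\langle e_1\mu,e_j\mu\rangle=\langle e_1,e_j\rangle$.
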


\begin{proof} The argument is similar to that in (\cite{Sus1}, Lemma $2.1$), 
(also  
see (\cite{Sus-Vas}, Chapter III)). We recall it for the convenience of the 
reader. 

By Theorem \ref{fasel} $\overline{\sigma} \in \SL_2(A/(a))
\cap \E_3(A/(a)) = \SL_2(A/(a)) \cap \ESp_4(A/(a))$. Therefore $\tau
\in \ESp_4(A))$ such that $\overline{\tau} = \overline{\sigma} \perp
I_2$. Note that $e_4 \tau = e_4 \vpmod a$. So by our assumption we
have $\delta \in \Sp_4(A, a)$ so that $e_4\tau = e_4\delta$. Let
$\varepsilon = \tau \delta^{-1} \in \Sp_4(A)$. Then $e_4\varepsilon =
e_4$ and $\overline{\varepsilon} = \overline{\sigma} \perp I_2$. It is
easy to see that $\varepsilon$ will look like  $\begin{pmatrix} \sigma
& 0 & *\\ *    & 1 & *\\ 0    & 0 & 1
\end{pmatrix}$ for some $\sigma \in \SL_2(A)$. Then $\sigma$ is a lift
of $\overline {\sigma}$.
\end{proof} 

\begin{theorem}\label{affine-three-fold}  Let $A$ be a smooth affine
algebra of dimension $3$ over an algebraically closed field $k$ of
characteristic not equal to $2, 3$. Then the group structure on the
orbit space $\MSE_3(A)$ is nice. 
\end{theorem}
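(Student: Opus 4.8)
The plan is to use the group structure on $\MSE_3(A)$ furnished by Remark \ref{igs}: since $A$ is non-singular of dimension $3$ over the algebraically closed, hence $C_1$, field $k$, the Vaserstein symbol isomorphism of Theorem \ref{rao1} together with (\cite{Sus-Vas}, Theorem 5.2.a) equips $\MSE_3(A)$ with a $Vv$-rule group structure. Niceness is the identity $[(b,a_2,a_3)]\ast [(a,a_2,a_3)] = [(ab,a_2,a_3)]$ for all rows sharing their last two coordinates. Since the $Vv$ product does not depend on the completion used, it is enough to exhibit, for each such pair, one matrix $\alpha=\left(\begin{smallmatrix} a & a_2\\ c & d\end{smallmatrix}\right)$ with $\overline{\det\alpha}$ a unit in $A/(a_3)$ for which $[((b,a_2)\alpha,\,a_3)]=[(ab,a_2,a_3)]$.

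First I would reduce to the case of a square multiplier. A direct computation gives $(b,a_2)\alpha=(ab+a_2c,\ a_2(b+d))=(ab,a_2)\left(\begin{smallmatrix}1&0\\ c& b+d\end{smallmatrix}\right)$, a $2\times 2$ transformation of $(ab,a_2)$ of determinant $b+d$ and first row $(1,0)$. Hence, applying the $\SL$-action formula of Lemma \ref{rao} with $k=2$ and $v=(ab,a_2,a_3)$, provided $\overline{b+d}$ is a square of a unit modulo $a_3$, one obtains $[((b,a_2)\alpha,\,a_3)]=[(ab,a_2,a_3)]\ast[(1,0,a_3)]=[(ab,a_2,a_3)]$, because $(1,0,a_3)\sim_{\E_3}e_1$. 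Thus niceness follows once we find $c,d$ with $ad-a_2c$ a unit and $b+d$ a square of a unit, both modulo $a_3$; equivalently, a square of a unit $w$ in $A/(a_3)$ whose image in the one-dimensional ring $A/(a_2,a_3)$ differs from $\bar b$ by a unit.

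The essential difficulty is to produce such a $w$ when the naive choices fail, i.e. when $\bar b$ hits the forbidden square values along the curve $\operatorname{Spec}A/(a_2,a_3)$. Here I would descend to a surface: choose $a\in A$ so that $A/(a)$ is a smooth affine surface of dimension $2$ (possible by a Bertini--Swan argument using smoothness of $A$ and $k=\bar k$) and carries the data in question. On the surface $S=A/(a)$, Fasel's theorem (Theorem \ref{fasel}) identifies $\SL_2(S)\cap \E(S)$ with $\SL_2(S)\cap \ESp_4(S)$, so any stably elementary $2\times 2$ matrix realizing the required relation over $S$ is in fact stably elementary symplectic, and the corresponding symplectic transvections carry the Mennicke-like multiplicativity. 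I would then transport the solution back to $A$ by lifting the relevant element of $\SL_2(S)\cap \E_3(S)$ to $\SL_2(A)$ via Lemma \ref{lift}, whose hypothesis $\Um_4(A,(a))=e_1\Sp_4(A,(a))$ is exactly Lemma \ref{sp4}; this converts the stable equivalence available on $S$ into an honest equality of classes in $\MSE_3(A)$.

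The main obstacle is precisely this general, non-square case: the elementary analysis yields niceness only when the multiplier may be taken to be a square of a unit, and the new content lies in the passage to a smooth surface section and the faithful lifting of the symplectic solution back to the threefold. The two delicate points to be checked are the existence of a principal ideal $(a)$ with $A/(a)$ smooth of dimension $2$ adapted to the given row, and the verification that the matrix produced over $S$ ascends---after the $\Sp_4$-completion of Lemma \ref{sp4} and the lift of Lemma \ref{lift}---to an element of $\E_3(A)$ rather than merely a stably elementary one.
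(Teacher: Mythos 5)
Your overall strategy coincides with the paper's: cut down to a smooth affine surface by Swan's version of Bertini's theorem, invoke Fasel's identification $\SL_2(S)\cap\E(S)=\SL_2(S)\cap\ESp_4(S)$ together with $\Um_4(A,(a))=e_1\Sp_4(A,(a))$ (Lemma \ref{sp4}) to lift a suitable $2\times 2$ matrix from the surface to $\SL_2(A)$ via Lemma \ref{lift}, and absorb the resulting $\SL_2(A)$-action using Lemma \ref{rao}. Your opening reduction is a harmless variant: the paper does not look for a completion $\alpha$ making $\overline{b+d}$ a square of a unit, but instead computes the $Vv$ product in the form $[(a(b+p)-1,(b+p)a_1,a_2')]$ and compares it directly with $[(ab,a_1,a_2')]$; your squareness condition is sufficient when available but is never actually arranged, and the paper's route shows it is unnecessary, since the ``in particular'' clause of Lemma \ref{rao} ($[vg]=[v]+[e_1g]$ for $g\in\SL_3(A)$) carries no such hypothesis.

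The genuine gap is in the surface step. You write that ``any stably elementary $2\times 2$ matrix realizing the required relation over $S$'' can be lifted, but you never show that such a matrix exists, and this existence is the heart of the proof. Concretely, after Bertini one must produce $\overline\sigma\in\SL_2(\overline A)\cap\E_3(\overline A)$, $\overline A=A/(a_2')$, carrying $(\overline a(\overline b+\overline p)-1,(\overline b+\overline p)\overline{a_1})$ to $(\overline{a}\,\overline{b},\overline{a_1})$ (equivalently, carrying $(\overline b,\overline{a_2})\overline\alpha$ to $(\overline{ab},\overline{a_2})$ in your notation). The paper obtains this from the multiplicativity of the universal Mennicke symbol on the two-dimensional ring $\overline A$, via $\ms(\overline a(\overline b+\overline p)-1,(\overline b+\overline p)\overline{a_1})=\ms(\overline a(\overline b+\overline p)-1,\overline{a_1})=\ms(\overline a\overline b,\overline{a_1})$, and equality of symbols then yields the required $\overline\sigma$; without this step there is nothing for Lemma \ref{lift} to lift. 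A smaller point: your closing worry that the lifted matrix must land in $\E_3(A)$ rather than be merely stably elementary is misplaced. It suffices that $\sigma\in\SL_2(A)$, because Lemma \ref{rao} gives $[(x_1,x_2,a_2')(\sigma\perp 1)]=[(x_1,x_2,a_2')]\ast[e_1(\sigma\perp 1)]$, and $e_1(\sigma\perp 1)$, being the first row of an $\SL_2(A)$ matrix padded by a zero, is elementarily equivalent to $e_1$.
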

\begin{proof} A group structure on $\MSE_3(A)$ exists by Theorem
\ref{rao1}. Let $[v] = [(a, a_1, a_2)]$ and $[w] = [(b, a_1,
a_2)]$. We will show that  $[v]\ast [w] = [(ab, a_1, a_2)].$  Applying
Swan's version of Bertini's Theorem as in (\cite{Swan}), we can add a
general linear combination of $ab, a_1$ to $a_2$ changing it to $a_2'$
and assume that $A/(a_2')$ is a smooth affine surface. Theorem
\ref{rao1} shows that $\MSE_3(A)$ has a group structure {\it
viz.}
\begin{eqnarray*} [(b, a_1, a_2)]\ast [(a, a_1, a_2)] \\ &=&[(b, a_1,
a_2')]\ast [(a, a_1, a_2')]\\ &=&[(a(b + p) - 1, (b + p){a_1}, a_2')]
 \: \hskip1cm \ldots \ldots \hfill  (1).
\end{eqnarray*} where $p$ is chosen so that $ap - 1$ belongs to the
ideal generated by $a_1, a_2'$.   Let `overline' denote the image in
$\overline{A}:= A/(a_2')$ and ${\rm ms}$ the universal Mennicke
symbol. Then, we have 
\begin{align*}  ~~{\rm ms}(\overline{a}(\overline{b}+\overline{p}) -
1, (\overline{b}+\overline{p})\overline{a_1})\\  &=  
{\rm
ms}(\overline{a}(\overline{b}+\overline{p}) - 1,
(\overline{b}+\overline{p})){\rm
ms}(\overline{a}(\overline{b}+\overline{p}) - 1, \overline{a_1}) \\ &=
{\rm ms}(\overline{a}(\overline{b}+\overline{p}) -1, \overline{a_1})\\
&=  {\rm ms}(\overline{a}\overline{b}, \overline{a_1}). 
\end{align*}
So  there exists $\overline{\sigma} \in
\SL_2(\overline{A}) \cap \E_3(\overline{A})$ such that  $
(\overline{a}(\overline{b}+\overline{p}) - 1,
(\overline{b}+\overline{p})\overline{a_1})\overline{\sigma} =
(\overline{a}\overline{b}, \overline{a_1})$. By combining 
Lemma \ref{sp4} and Lemma \ref{lift}, one knows that 
$\overline{\sigma}$  has a lift $\sigma \in \SL_2(A)$. So we have 

\begin{eqnarray*}  [(ab, a_1, a_2)] \\ &=& [(ab, a_1, a_2')]\\ &=&
[(a(b+p)-1, (b+p)a_1, a_2') \begin{pmatrix}\sigma & 0\\ 0 &      1
                               \end{pmatrix}]\\ &=& [(a(b + p) - 1, (b
+ p){a_1}, a_2')]\ast [e_1\begin{pmatrix} \sigma & 0 \\ 0 & 1
                                            \end{pmatrix}] ; \;
\text{by Lemma \ref{rao}} \\ &=& [(a(b + p) - 1, (b + p){a_1},
a_2')]  \; \hskip1cm \ldots \ldots \hfill (2).
\end{eqnarray*} Hence, the result follows from equations (1) and (2).
\end{proof}

\begin{theorem}\label{main nice}  Let $A$ be a smooth affine algebra
over an algebraically closed field $k$ of characteristic not equal to
$2, 3$.  Then, the group structure on the orbit space $\MSE_d(A)$, $d
\geq 3$ is nice, i.e.  it is given by the `coordinate-wise
multiplication' formula: 
$$[(a, a_1, a_2, \ldots, a_{d-1})]\ast [(b, a_1, a_2, \ldots, a_{d-1})] = [(ab, a_1, a_2, \ldots, a_{d-1})].$$ 
\end{theorem}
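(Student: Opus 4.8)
The plan is to reduce the general statement for dimension $d$ to the case $d=3$, which is already established in Theorem~\ref{affine-three-fold}, by a dimension-reduction argument that trims the row down to length $3$ while preserving the coordinates with respect to which niceness is formulated.

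First I would fix two classes $[(a, a_1, \ldots, a_{d-1})]$ and $[(b, a_1, \ldots, a_{d-1})]$ in $\MSE_d(A)$ and aim to show their product equals $[(ab, a_1, \ldots, a_{d-1})]$. The group structure exists by Remark~\ref{igs}. The key geometric input is Swan's version of Bertini's theorem, exactly as used in the proof of Theorem~\ref{affine-three-fold}: by adding suitable general linear combinations of the other coordinates to $a_{d-1}$, I can arrange that $A/(a_{d-1})$ is a smooth affine algebra of dimension $d-1$ over the same algebraically closed field $k$, without changing the relevant orbit classes. This is what lets induction proceed, since smoothness and the base-field hypotheses are inherited by the quotient.

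The heart of the argument is an inductive step: working modulo the last coordinate, I would use Lemma~\ref{rao} (the van der Kallen--Rao formula $[v \cdot g] = [v] + [e_1 g]$ for $g \in \SL_d(A)$, together with its $k \times k$ block version) to transport the multiplication on the first coordinate down to a computation over $\overline{A} = A/(a_{d-1})$. Concretely, via the $Vv$-rule expression for the product and the universal Mennicke symbol identity over $\overline{A}$, I would produce a matrix $\overline{\sigma} \in \SL_{d-1}(\overline{A}) \cap \E(\overline{A})$ witnessing the coordinate-wise formula in the quotient, then lift $\overline{\sigma}$ to $\SL_{d-1}(A)$ and apply Lemma~\ref{rao} to conclude the formula upstairs. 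The induction bottoms out at $d=3$ via Theorem~\ref{affine-three-fold}.

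The main obstacle I anticipate is the lifting step: in the $d=3$ base case the lift of $\overline{\sigma} \in \SL_2(\overline{A}) \cap \E_3(\overline{A})$ to $\SL_2(A)$ was produced by combining Fasel's theorem (Theorem~\ref{fasel}) with the symplectic surjectivity $\Um_4(A,I) = e_1\Sp_4(A,I)$ (Lemma~\ref{sp4}) through Lemma~\ref{lift}, all of which are special to dimension $3$ and surfaces. For general $d$ one either needs an analogous lifting of an element of $\SL_{d-1}(\overline{A}) \cap \E(\overline{A})$ to $\SL_{d-1}(A)$, or one must instead drive the whole argument down to length $3$ before invoking the hard lifting, so that the only place Fasel's surface result is used is at the bottom of the induction. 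I expect the cleanest route is the latter: reduce to a length-$3$ row over a smooth affine threefold obtained as an iterated quotient, apply Theorem~\ref{affine-three-fold} there, and then climb back up using Lemma~\ref{rao} at each stage. Verifying that the Bertini-type general position choices remain available at every level, and that the smooth threefold hypotheses of the base case are genuinely met after the reductions, is the delicate bookkeeping that makes this the crux of the proof.
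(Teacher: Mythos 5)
Your skeleton --- induction on $d$ with base case Theorem \ref{affine-three-fold}, Swan--Bertini applied to the last coordinate so that $A/(a_{d-1}')$ is smooth of dimension $d-1$, the $Vv$ rule to write $[w]\ast[v]=[(a(b+p)-1,a_1(b+p),a_2,\ldots,a_{d-1}')]$, and reduction modulo $a_{d-1}'$ --- is exactly the paper's proof. But the step you flag as the crux and leave unresolved, namely lifting a witness $\overline{\sigma}\in \SL_{d-1}(\overline{A})\cap\E(\overline{A})$ back to $A$, is not actually needed, and failing to see this is the gap. In the inductive step the witness is not merely stably elementary: the induction hypothesis, applied to the smooth affine algebra $\overline{A}=A/(a_{d-1}')$ of dimension $d-1$ (with the group structure on $\MSE_{d-1}(\overline{A})$ supplied by Remark \ref{igs}), says that $(\overline{a}(\overline{b}+\overline{p})-1,\overline{a_1}(\overline{b}+\overline{p}),\overline{a_2},\ldots,\overline{a_{d-2}})$ and $(\overline{ab},\overline{a_1},\ldots,\overline{a_{d-2}})$ lie in the same $\E_{d-1}(\overline{A})$-orbit. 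Elementary matrices lift along any surjection of rings, so you obtain $\varepsilon\in\E_{d-1}(A)$ carrying one length-$(d-1)$ row to the other modulo $(a_{d-1}')$, and the residual discrepancy, being a multiple of $a_{d-1}'$, is cleared by further elementary operations on the full length-$d$ row whose last coordinate is $a_{d-1}'$. Hence $[(a(b+p)-1,a_1(b+p),a_2,\ldots,a_{d-1}')]=[(ab,a_1,\ldots,a_{d-1}')]=[(ab,a_1,\ldots,a_{d-1})]$. No analogue of Lemma \ref{lift}, no appeal to Theorem \ref{fasel} or Lemma \ref{sp4}, and no use of Lemma \ref{rao} occurs above dimension $3$; the hard $\SL_2\cap\E_3$ lifting is confined entirely to the base case, which is already proved.

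Your fallback of driving the row all the way down to length $3$ over an iterated quotient and then climbing back up is this same induction unrolled, and it would work, but Lemma \ref{rao} is the wrong vehicle for the climb: what transports the conclusion upward at each stage is the trivial liftability of elementary matrices together with absorption of multiples of the killed coordinate, not the formula $[v\cdot g]=[v]+[e_1g]$. One further point of bookkeeping worth making explicit: the general linear combination is added to $a_{d-1}$ from $ab,a_1,\ldots,a_{d-2}$, so that the classes of $v$, of $w$, and of the target row $(ab,a_1,\ldots,a_{d-1})$ are all unchanged simultaneously, which is what allows the comparison of the three rows after the Bertini move.
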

\begin{proof} We shall proceed by induction on $d$. $d = 3$ is proved
in Theorem \ref{affine-three-fold}. Let $v= (a, a_1, a_2, \ldots,
a_{d-1})$, $w=(b, a_1, a_2, \ldots, a_{d-1})$ be such that $v, w \in
\Um_d(A)$. Applying R. G. Swan's version of Bertini's Theorem as in
(\cite{Swan}), we can add a general linear combination of $ab, a_1,
a_2 \ldots, a_{d-2}$ to $a_{d-1}$ changing it to $a_{d-1}'$ and assume
that $A/(a_{d-1}')$ is a smooth affine algebra of dimension $d-1$. Now
we have a group structure on $\MSE_d(A)$ by Remark \ref{igs}.  Choose
$p \in A$ such that $ap \equiv 1 \vpmod {(a_1, a_2, \ldots
a_{d-1}')}$. Then 
\begin{eqnarray*} [w]\ast [v]&=&[(b, a_1, a_2, \ldots, a_{d-1}')]\ast [(a,
a_1, a_2, \ldots, a_{d-1}')]\\ &=&[(a(b+p) - 1, a_1(b+p), a_2, \ldots,
a_{d-1}')]\, \ldots [1].
\end{eqnarray*}  Going modulo $a'_{d-1}$, we have
\begin{eqnarray*} [(\overline{a}(\overline{b}+\overline{p}) -
\overline{1}, \overline{a_1}(\overline{b}+\overline{p}),
\overline{a_2}, \ldots, \overline{a_{d-2}})]\\ =&&[(\overline{a},
\overline{a_1}, \overline{a_2}, \ldots,
\overline{a_{d-2}})]\ast [(\overline{b}, \overline{a_1}, \overline{a_2},
\ldots, \overline{a_{d-2}})] \quad \text{by Remark \ref{igs}} \\
=&&[(\overline{ab}, \overline{a_1}, \overline{a_2}, \ldots,
\overline{a_{d-2}})] \quad \text{by induction hypothesis.}
\end{eqnarray*} Therefore, 
\begin{eqnarray*} [(a(b+p) - 1, a_1(b+p), a_2, \ldots, a_{d-1}')]  &=&
[(ab, a_1, a_2, \ldots, a_{d-1}')]\\ &=& [(ab, a_1, a_2, \ldots,
a_{d-1})]\, \ldots [2].
\end{eqnarray*} and the result follows from [1] and [2].
\end{proof}
\begin{corollary}
$\MSE_d(A) = \WMS_d(A) = \MS_d(A)$ for $A$ satisfying properties as in earlier theorem.
\end{corollary}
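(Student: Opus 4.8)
The plan is to read off both equalities from the niceness of Theorem \ref{main nice} together with van der Kallen's description of the weak Mennicke symbol. By Remark \ref{igs} the orbit space $\MSE_d(A)$ carries the group structure given by the $Vv$ rule for every $d \geq 3$, and van der Kallen's formula recalled just before Theorem \ref{wms} shows that the universal weak Mennicke symbol $\wms\colon \MSE_d(A) \to \WMS_d(A)$ is a surjective group homomorphism. Thus the task splits into two parts: identifying the target $\WMS_d(A)$ with the universal Mennicke symbol group $\MS_d(A)$, and showing that $\wms$ is injective.

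First I would establish $\WMS_d(A) = \MS_d(A)$, which is where the niceness enters. Fix $v = (a, a_2, \ldots, a_d)$ and $w = (b, a_2, \ldots, a_d)$ in $\Um_d(A)$ and $p \in A$ with $ap \equiv 1 \vpmod{(a_2, \ldots, a_d)}$. Van der Kallen's formula gives
$$\wms(w)\,\wms(v) = \wms\bigl(a(b+p) - 1,\, a_2(b+p),\, a_3, \ldots, a_d\bigr),$$
while the $Vv$ rule reads $[w] \ast [v] = [(a(b+p)-1, a_2(b+p), a_3, \ldots, a_d)]$ and niceness (Theorem \ref{main nice}) identifies this class with $[(ab, a_2, \ldots, a_d)]$ in $\MSE_d(A)$. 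Applying the well-defined map $\wms$ to this last identity and feeding the result into van der Kallen's formula yields
$$\wms(b, a_2, \ldots, a_d)\,\wms(a, a_2, \ldots, a_d) = \wms(ab, a_2, \ldots, a_d),$$
i.e. $\wms$ satisfies the full Mennicke relation. Hence $\wms$ is a genuine Mennicke symbol, and the universal property of $\ms\colon \Um_d(A) \to \MS_d(A)$ furnishes a homomorphism $\MS_d(A) \to \WMS_d(A)$, $\ms(v) \mapsto \wms(v)$, which is a two-sided inverse of the canonical surjection $\WMS_d(A) \twoheadrightarrow \MS_d(A)$. Therefore $\WMS_d(A) \cong \MS_d(A)$ for all $d \geq 3$.

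It remains to prove that $\wms$ is injective, i.e. $\MSE_d(A) = \WMS_d(A)$. For $d \geq 4$ this is immediate from Theorem \ref{wms} with row length $n = d$: one has $Sd(A) \leq d \leq 2d - 4$, so $\wms$ is already a bijection. The base case $d = 3$ is the main obstacle, precisely because the stable-range hypothesis of Theorem \ref{wms} fails when $n = \dim A = 3$ (this is also why Remark \ref{igs} had to invoke Theorem \ref{rao1} there). To handle it I would use the Vaserstein-symbol isomorphism $\MSE_3(A) \cong W_E(A)$ of Theorem \ref{rao1} together with the chain of surjections
$$\MSE_3(A) \xrightarrow{\wms} \WMS_3(A) \xrightarrow{\pi} \MS_3(A),$$
in which $\pi$ is the isomorphism just produced. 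Since $\pi$ is an isomorphism, $\wms$ is injective if and only if the composite $\ms = \pi \circ \wms$ is, so everything reduces to showing that the Mennicke symbol $\ms\colon \MSE_3(A) \to \MS_3(A)$ is injective --- equivalently that the Vaserstein and Mennicke symbols detect the same $\E_3$-orbits on $\Um_3(A)$. This last injectivity is the delicate technical point of the argument; granting it, both $\wms$ and $\pi$ are forced to be isomorphisms, and the full chain $\MSE_d(A) = \WMS_d(A) = \MS_d(A)$ holds for every $d \geq 3$.
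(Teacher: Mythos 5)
The paper states this corollary without proof, so the only question is whether your argument is complete. It is not: you leave the case $d=3$ open. Your derivation of $\WMS_d(A)=\MS_d(A)$ from niceness via the universal property of $\MS_d(A)$ is correct, as is the appeal to Theorem \ref{wms} for $\MSE_d(A)=\WMS_d(A)$ when $d\geq 4$ (where $Sd(A)\leq d\leq 2d-4$). But for $d=3$ you only reduce the claim to the injectivity of $\ms\colon \MSE_3(A)\to \MS_3(A)$ and then write that you are ``granting it''; nothing in your argument establishes that injectivity, and the hypothesis $Sd(A)\leq 2n-4$ of Theorem \ref{wms} is not available when $n=d=3$. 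So this is a genuine gap, not a formality.

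The gap is closed by the same universal-property device you already used for the pair $(\WMS,\MS)$, applied one level down. Once Theorem \ref{main nice} (resp.\ Theorem \ref{affine-three-fold}) is in force, the class map $\Um_d(A)\to \MSE_d(A)$ is itself a Mennicke symbol for the $Vv$-group structure: invariance under $\E_d(A)$ is relation MS1, and niceness is exactly MS2. It is also a weak Mennicke symbol, since if $r(1+q)\equiv q \vpmod{(v_2,\ldots,v_d)}$ then
$$[(r,v_2,\ldots,v_d)]\ast[(1+q,v_2,\ldots,v_d)]=[(r(1+q),v_2,\ldots,v_d)]=[(q,v_2,\ldots,v_d)],$$
the last step by adding an element of $(v_2,\ldots,v_d)$ to the first coordinate. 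The universal properties of $\WMS_d(A)$ and $\MS_d(A)$ therefore produce homomorphisms $\WMS_d(A)\to \MSE_d(A)$ and $\MS_d(A)\to \MSE_d(A)$ sending $\wms(v)$, $\ms(v)$ to $[v]$, and these split the surjections $[v]\mapsto \wms(v)$ and $[v]\mapsto \ms(v)$; hence all maps in $\MSE_d(A)\to \WMS_d(A)\to \MS_d(A)$ are isomorphisms. This works uniformly for every $d\geq 3$ and makes both your case split and the appeal to Theorem \ref{wms} unnecessary. (Alternatively, for $d=3$ you could quote the improved injective stability for $\WMS_d$ of a non-singular affine algebra from \cite{vdK-Rao}, the same source as Theorem \ref{rao1}, which gives $\MSE_3(A)=\WMS_3(A)$ directly.)
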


\section{A relative Mennicke--Newman Lemma} 

We begin by recalling the
two cases of the Mennicke--Newman lemma proved  by W. van der Kallen
(following (\cite{Sus}, Lemma 1.2) and (\cite{bms}, Lemma 2.4)).  We then
proceed to prove an analogue of it. 

\medskip

First the relative case:

\begin{lemma} $($\cite{vdK1}, Lemma $3.4$$)$ Let $R$ be a commutative ring
of Krull dimension $d$. Let $v, w$ be  unimodular rows of length $d +
1$ relative to an ideal $I$ of $R$.  Then there exist $\varepsilon_1,
\varepsilon_2 \in \E_{d+1}(R, I)$ such  that $v \varepsilon_1 = (x,
a_2,. \ldots, a_{d+1})$,  $w \varepsilon_2 = (y, a_2,. \ldots,
a_{d+1})$, with  $V(a_2, \ldots, a_{d+1})$ is a union of the closed 
set $V(I + a_2R +
\ldots + a_{d+1}R)$  and finitely many subsets of dimension $0$. 
\end{lemma}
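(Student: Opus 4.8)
The plan is to prove the two-row statement in two stages: first put the tail of $v$ into general position, and then drag $w$ onto that same tail. Throughout I would write $J=(a_2,\dots,a_{d+1})$ and $U=\operatorname{Spec} R\setminus V(I)$. Since each tail entry $a_j$ will be arranged to lie in $I$, one has $V(I)\subseteq V(J)$, hence $V(I+J)=V(I)\cap V(J)=V(I)$; so the asserted conclusion is equivalent to the single requirement $\dim\bigl(V(J)\cap U\bigr)\le 0$, i.e. that $V(J)$ differs from $V(I)$ only in finitely many closed points. Both rows are $\equiv e_1\bmod I$, so $v=(v_1,\dots,v_{d+1})$ has $v_1\equiv 1$ and $v_2,\dots,v_{d+1}\in I$, and similarly for $w$.

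Step 1 (general position for $v$). First I would build the tail one entry at a time by a relative Eisenbud--Evans/prime-avoidance argument. Suppose $a_2,\dots,a_k$ have been produced with $\dim\bigl(V(a_2,\dots,a_k)\cap U\bigr)\le d-(k-1)$ (true vacuously for $k=1$). Let $\mathfrak p_1,\dots,\mathfrak p_m$ be the generic points of the top-dimensional components of $V(a_2,\dots,a_k)\cap U$; these are finitely many by the standing finiteness hypothesis on $\max(R)$. Each $\mathfrak p_i\notin V(I)$, and since the modifications keep the ideal $(v_1,a_2,\dots,a_k,v_{k+1},\dots,v_{d+1})$ equal to $R$, at each $\mathfrak p_i$ one of $v_1,v_{k+1},\dots,v_{d+1}$ is a unit. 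Adding to $v_{k+1}$ a suitable combination, namely an $I$-multiple of $v_1$ via $E_{1,k+1}(x)$ with $x\in I$ when $v_1\notin\mathfrak p_i$, and a multiple of some later entry when $v_1\in\mathfrak p_i$, I would obtain $a_{k+1}\notin\bigcup_i\mathfrak p_i$, dropping the dimension by one. After $d$ steps $\dim(V(J)\cap U)\le 0$ and $v\varepsilon_1=(x,a_2,\dots,a_{d+1})$ for some $\varepsilon_1\in\E_{d+1}(R,I)$.

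Step 2 (matching $w$). Next I would pass to $\bar R=R/J$ with induced ideal $\bar I=(I+J)/J$. Because $J\subseteq I$ and, by Step 1, $V(J)\setminus V(I)$ is finite, the open set $\operatorname{Spec}\bar R\setminus V(\bar I)$ is zero-dimensional; hence the relative stable range of $\bar I$ is that of a semilocal situation, and $\E_{d+1}(\bar R,\bar I)$ acts transitively on $\Um_{d+1}(\bar R,\bar I)$ once $d+1\ge 3$. Thus both $\bar v=\bar x\,\bar e_1$ and $\bar w$ lie in the relative elementary orbit of $\bar e_1$, so $\bar w\,\bar\eta=\bar v$ for some $\bar\eta\in\E_{d+1}(\bar R,\bar I)$. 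Lifting the generators of $\bar\eta$ gives $\eta\in\E_{d+1}(R,I)$ with $w\eta\equiv(x,a_2,\dots,a_{d+1})\pmod J$, and a final correction within $\E_{d+1}(R,I)$, absorbing the leftover $J$-valued discrepancy of each tail entry, yields $\varepsilon_2$ with $w\varepsilon_2=(y,a_2,\dots,a_{d+1})$.

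The hard part will be two bookkeeping points rather than the dimension count. In Step 1 every move must stay inside the \emph{relative} elementary orbit: escaping a prime $\mathfrak p_i$ at which the pivot $v_1$ vanishes forces one to add a multiple of another tail entry, and verifying that such an addition is induced by an element of $\E_{d+1}(R,I)$ and not merely of $\E_{d+1}(R)$ is the crux, handled by the commutator generators $E_{ij}(a)E_{ji}(x)E_{ij}(-a)$ with $x\in I$. In Step 2 the exact matching of tails requires upgrading the quotient-level elementary equivalence over $\bar R$ to a genuine relative one over $R$; Lemma \ref{lem:august15} is the tool that effects this upgrade. One could alternatively route the entire argument through the Excision isomorphism $\MSE_{d+1}(R,I)\cong\MSE_{d+1}(R\oplus I,0\oplus I)$, apply the absolute Mennicke--Newman lemma over the $d$-dimensional ring $R\oplus I$, and transport the general-position conclusion back along $\pi_2$.
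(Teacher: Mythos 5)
First, a point of reference: the paper itself offers no proof of this statement --- it is recalled verbatim from van der Kallen (\cite{vdK1}, Lemma 3.4) --- so your attempt can only be measured against the proofs the paper gives of its own analogues, Lemmas \ref{rm-newman1} and \ref{rm-newman2}. Your Step 1 is essentially sound, and the difficulty you flag there is in fact avoidable: at a prime $\mathfrak p\supseteq(a_2,\dots,a_k)$ with $\mathfrak p\not\supseteq I$ one may always take the avoiding combination with coefficients in $I$ (since $Iv_j\not\subseteq\mathfrak p$ whenever $v_j\notin\mathfrak p$), so only the honest generators $E_{j,k+1}(\xi)$ with $\xi\in I$ of $\E_{d+1}(R,I)$ are needed and no commutator acrobatics arise. (You do quietly import a finiteness hypothesis on $\max(R)$ that is not in the statement, but that is how the cited lemma is meant to be read.)

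The genuine gap is in Step 2, precisely at the phrase ``a final correction within $\E_{d+1}(R,I)$, absorbing the leftover $J$-valued discrepancy of each tail entry.'' After lifting $\bar\eta$ you have $w\eta=(u_1,j_2,\dots,j_{d+1})$ with $j_i\in J$ and $u_1$ a unit only modulo $J$; writing $u_1p=1-t$ with $t\in J$, the move $E_{1k}(p(a_k-j_k))$ changes the $k$-th entry to $a_k-t(a_k-j_k)$, i.e.\ it improves the congruence from modulo $J$ to modulo $JI$, and iterating this never terminates. There is no general principle that two relative unimodular rows which are congruent modulo the ideal generated by one of their tails are relatively elementarily equivalent --- that is essentially the statement one is trying to prove, not a bookkeeping step. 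The actual arguments (van der Kallen's, and the paper's Lemmas \ref{rm-newman1} and \ref{rm-newman2}) are structured differently: concatenate $v$ and $w$ into a single row of length $2n-1$, use the stable range to make the two partial tails comaximal, equalize the \emph{distinguished} coordinates first (so both become $1-\lambda$ with $\lambda\in I$), rescale the tails by $\lambda^2$, and only then equalize the remaining coordinates by exploiting that their differences are divisible by $\lambda^2=\lambda(a_n'+b_n')$; alternatively, pass through Excision to the absolute Mennicke--Newman Lemma \ref{mn} over $\ZZ\oplus I$, as in the proof of Lemma \ref{rm-newman2}. Your closing remark points at this second route, but even there one must still verify that the general-position condition on $V(a_2,\dots,a_{d+1})$ transports back from $\max(\ZZ\oplus I)$ to $\max(R)\setminus V(I)$ via (\cite{vdK1}, 3.19), which is not automatic.
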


\medskip

Next the absolute case:

\begin{lemma}\label{mn} $($\cite{vdK3}, Lemma $3.2$ $)$ Let $R$ be a ring of stable
dimension $d \leq 2n -3$. Let $v, w \in Um_n(R)$.  Then there are
$\varepsilon_1, \varepsilon_2 \in \E_n(R)$, and $x, y, a_i \in R  $, with
$x + y = 1$ such that $v\varepsilon_1 = (x, a_2, \ldots, a_n)$,
$w\varepsilon_2 = (y, a_2, \ldots, a_n)$. 
\end{lemma}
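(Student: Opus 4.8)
The plan is to follow the classical two-part strategy behind the Mennicke--Newman lemma: first place $v$ and $w$ in a common tail by elementary transformations (the ``Mennicke'' part), and then adjust the two leading coordinates so that they become complementary (the ``Newman'' part). Everything is driven by the stable range hypothesis: since $Sd(R) \le 2n-3$ is, by Definition \ref{s range, s dim}, the same as $Sr(R) \le 2n-2$, the condition $Sr_{2n-2}(R)$ holds, and by Vaserstein's argument ($Sr_m \Rightarrow Sr_{m+1}$) recalled in Section 2 the condition $Sr_m(R)$ then holds for every $m \ge 2n-2$. The sharper range $2n-2$ (rather than the transitivity range $d+2$, which is not available here) is exactly what makes the statement nontrivial and possible.

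For the common tail, write $v = (a_1, \ldots, a_n)$ and $w = (b_1, \ldots, b_n)$. I would form from these an auxiliary unimodular row of length $2n-1$, for instance $(a_1, a_2, \ldots, a_n, b_2, \ldots, b_n)$, which is unimodular because it already contains the unimodular row $v$. Applying $Sr_{2n-2}(R)$ repeatedly, one shortens this row back to length $n$ by adding multiples of a designated coordinate to the remaining ones; keeping careful track of which additions act on the $v$-block of coordinates and which on the $w$-block, these shortenings are realized by elementary matrices $\varepsilon_1'$ acting on $v$ and $\varepsilon_2'$ acting on $w$. The outcome is $v \varepsilon_1' = (p, a_2', \ldots, a_n')$ and $w \varepsilon_2' = (q, a_2', \ldots, a_n')$ with a common tail $\mathbf{a} := (a_2', \ldots, a_n')$; in particular $p$ and $q$ are both units modulo $J := (a_2', \ldots, a_n')$.

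The Newman part is where the real content lies. Granting a common tail with the extra property $p + q \equiv 1 \pmod{J}$, the lemma follows immediately: writing $1 - p - q = \sum_{i \ge 2} c_i a_i'$ and adding $\sum_{i \ge 2} c_i a_i'$ to the first coordinate of $v\varepsilon_1'$ (an elementary operation) replaces $p$ by $1-q$, so with $x := 1-q$ and $y := q$ one gets $v\varepsilon_1 = (x, \mathbf{a})$, $w\varepsilon_2 = (y, \mathbf{a})$ and $x + y = 1$, both rows still unimodular. Thus the entire difficulty is concentrated in arranging the \emph{additive} congruence $\bar{p} + \bar{q} = \bar{1}$ in $R/J$ between the two leading units; since fixing the tail fixes $\bar{p}$ and $\bar{q}$, this cannot be achieved by modifying the heads alone, and one must re-choose the common tail, exploiting the residual freedom in the stable-range reduction (equivalently, a Newman-type substitution using the unimodularity relations for $(p,\mathbf{a})$ and $(q,\mathbf{a})$). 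I expect this step---securing complementarity rather than merely a common tail---to be the main obstacle of the proof.
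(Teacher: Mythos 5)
Your proposal has the right hypotheses and the right overall shape (apply the stable range condition $Sr(R)\le 2n-2$ to a row of length $2n-1$, then normalize the two leading coordinates), but both halves contain genuine gaps. First, the common-tail step does not work as you describe. If you shorten $(a_1,\ldots,a_n,b_2,\ldots,b_n)$ by the stable range condition, the additions are multiples of the deleted coordinate, and a multiple of (say) $b_n$ added to $a_i$ is \emph{not} an elementary operation on $v$; moreover $Sr_{2n-2}(R)$ only lets you pass from length $2n-1$ to length $2n-2$ (the conditions $Sr_m$ propagate upward in $m$, as recalled in Section 2, not downward), so you cannot ``shorten back to length $n$''; and in any case a single shortening produces no common tail. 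The standard device --- used verbatim in the paper's relative analogue, Lemma \ref{rm-newman1} --- is to take as pivot the product $a_1b_1$ (there, $a_nb_n$): the row $(a_1b_1,a_2,\ldots,a_n,b_2,\ldots,b_n)$ lies in $\Um_{2n-1}(R)$, and adding multiples of $a_1b_1$ to the $a$-block (resp.\ the $b$-block) is elementary on $v$ (resp.\ on $w$) precisely because $a_1b_1\in(a_1)\cap(b_1)$. One application of $Sr_{2n-2}$ then yields not a common tail but the comaximality $(a_2,\ldots,a_n)+(b_2,\ldots,b_n)=R$.

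Second, the step you flag as ``the main obstacle'' --- securing $x+y=1$ --- is exactly the step you have not supplied, and your ordering (equalize the tails first, then fix the heads) makes it unachievable, as you yourself observe: once the tails agree, $\bar p+\bar q$ is pinned down modulo $J$. The correct order is the reverse. From the comaximality above, write $1-a_1-b_1=\sigma+\tau$ with $\sigma\in(a_2,\ldots,a_n)$ and $\tau\in(b_2,\ldots,b_n)$, and add $\sigma$ to $a_1$ and $\tau$ to $b_1$ (elementary on $v$, resp.\ on $w$) to obtain $a_1+b_1=1$. Now $(a_1)+(b_1)=R$, so for each $i\ge 2$ one can solve $b_i-a_i=\lambda_i a_1-\mu_i b_1$ and add $\lambda_i a_1$ to $a_i$ and $\mu_i b_1$ to $b_i$; this equalizes the tails without touching $a_1$ or $b_1$, so $x+y=1$ survives. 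With these two corrections your outline becomes van der Kallen's proof, which the paper cites for Lemma \ref{mn} and adapts in the proof of Lemma \ref{rm-newman1}.
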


\medskip

The following are  relative versions  of (\cite{vdK3}, Lemma $3.2$).

\begin{lemma}\label{rm-newman1}{{\rm(}Relative Mennicke--Newman{\rm)} }
Let $R$ be a ring  of stable dimension $d$ with $d \leq 2n - 3$ and
$I$ an ideal in $R$.  Let $v, w \in \Um_n(R, I)$. Then there exists
$\varepsilon_1, \varepsilon_2  \in \E_n(R, I)$ such that
$v\varepsilon_1 = (a_1, a_2, \ldots, a_{n-1}, a)$ and  $w\varepsilon_2
= (a_1, a_2, \ldots, a_{n-1}, b)$ such that $a+b$ is a unit modulo
$(a_1, a_2, \ldots, a_{n-1})$.  
\end{lemma}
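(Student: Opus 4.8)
The plan is to reduce the relative Mennicke--Newman statement to the absolute version (Lemma \ref{mn}), which is already available, by passing through the Excision ring and using the Double Excision bijections. First I would recall that by Remark \ref{vr} (or more directly by the Excision Theorem \ref{excision}), the orbit space $\MSE_n(R,I)$ is identified with $\MSE_n(\ZZ \oplus I, 0 \oplus I)$, a subset of $\MSE_n(\ZZ \oplus I)$; and crucially, by Lemma \ref{vaslemma} together with the computation in Remark \ref{vr}, the Excision ring $\ZZ \oplus I$ has stable dimension at most $d$ whenever $d \geq 2$. Hence the absolute Mennicke--Newman Lemma \ref{mn} applies to $\ZZ \oplus I$ under the stable-dimension hypothesis $d \leq 2n-3$.

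Next I would lift $v, w \in \Um_n(R, I)$ to relative rows $\tilde v, \tilde w \in \Um_n(\ZZ \oplus I, 0 \oplus I)$ using the bijection of the Excision Theorem \ref{excision}, so that $\tilde v \equiv e_1$ and $\tilde w \equiv e_1$ modulo $0 \oplus I$. Applying the absolute Lemma \ref{mn} to $\tilde v, \tilde w$ in $\Um_n(\ZZ \oplus I)$ produces $\bar\varepsilon_1, \bar\varepsilon_2 \in \E_n(\ZZ \oplus I)$ and elements $x, y$ with $x + y = 1$ realizing the normal form $\tilde v \bar\varepsilon_1 = (x, c_2, \ldots, c_n)$ and $\tilde w \bar\varepsilon_2 = (y, c_2, \ldots, c_n)$. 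The main obstacle is that these $\bar\varepsilon_i$ are only absolute elementary matrices, not relative ones, and the resulting rows need not lie in $\Um_n(\ZZ\oplus I, 0 \oplus I)$; so I must correct them into the relative elementary group. Here I would invoke the retract structure: the augmentation $\ZZ \oplus I \twoheadrightarrow \ZZ$ is split (by $m \mapsto (m,0)$), so Lemma \ref{lem:august15} gives $\E_n(\ZZ \oplus I, 0 \oplus I) = \E_n(\ZZ \oplus I) \cap \SL_n(\ZZ \oplus I, 0 \oplus I)$. Reducing the equations modulo $0 \oplus I$ and multiplying by the inverse of the reduced elementary matrix (as in the proof of Lemma \ref{l} and Theorem \ref{witt relative}) lets me replace each $\bar\varepsilon_i$ by an element of $\E_n(\ZZ \oplus I, 0 \oplus I)$ while keeping the target rows relative.

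Having arranged $\tilde v \varepsilon_1 = (a_1, \ldots, a_{n-1}, a)$ and $\tilde w \varepsilon_2 = (a_1, \ldots, a_{n-1}, b)$ in $\Um_n(\ZZ \oplus I, 0 \oplus I)$ with the first $n-1$ coordinates agreeing (after a coordinate permutation, which the relative elementary group tolerates by the argument in Lemma \ref{a}) and with $a + b$ a unit modulo $(a_1, \ldots, a_{n-1})$, I would push everything forward along the map $\mathfrak{f}$ induced by $f: \ZZ \oplus I \to R$ of Definition \ref{ering}. Since $f$ is a ring homomorphism carrying $0 \oplus I$ onto $I$ and sending $\E_n(\ZZ \oplus I, 0 \oplus I)$ into $\E_n(R, I)$, the images $\varepsilon_1, \varepsilon_2$ are genuine relative elementary matrices over $R$, and the unit condition on $a+b$ modulo $(a_1, \ldots, a_{n-1})$ is preserved. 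This yields the desired normal form over $(R, I)$.

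The genuinely delicate point is the second paragraph: ensuring that the normal form coming from the absolute lemma can be realized by \emph{relative} elementary transformations and that the common coordinates can be placed in positions $2, \ldots, n$ (the statement uses the last slot for the differing entry, whereas Lemma \ref{mn} puts it first). The positional rearrangement is harmless because, by the $\sigma$-conjugation argument in the proof of Lemma \ref{a}, $\E_n(R,I)$ is invariant under permutation conjugation, so the ``which coordinate'' ambiguity noted in the introduction does not affect the conclusion. I expect the retract/Lemma \ref{lem:august15} correction step to be where the real work lies, with the rest being transport of structure across the Excision bijections.
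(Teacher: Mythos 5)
Your route is genuinely different from the paper's: the paper proves Lemma \ref{rm-newman1} by a direct stable-range manipulation inside $R$ itself (form the long row $(a_1,\ldots,a_{n-1},b_1,\ldots,b_{n-1},a_nb_n)\in\Um_{2n-1}(R)$, use $Sr(R)\leq 2n-2$ to make the two ideals $(a_1,\ldots,a_{n-1})$ and $(b_1,\ldots,b_{n-1})$ comaximal, arrange $a_1=b_1=1-\lambda$, rescale the remaining coordinates by $\lambda^2$, repeat, and then use $a'_i-b'_i=c_i\lambda(a'_n+b'_n)$ to equalize the first $n-1$ coordinates by relative elementary moves), with no Excision ring in sight. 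Your Excision reduction is instead essentially the strategy the paper uses for Lemma \ref{rm-newman2}, and as written it has two genuine gaps. First, you need the absolute Lemma \ref{mn} to apply to $\ZZ\oplus I$, i.e.\ you need $Sd(\ZZ\oplus I)\leq 2n-3$; but the hypothesis of Lemma \ref{rm-newman1} is only that $Sd(R)\leq 2n-3$, and nothing in the paper converts this into a bound on $Sd(\ZZ\oplus I)$. Remark \ref{vr} does that only under the \emph{topological} hypothesis on $\max(R)$ (which is why Lemma \ref{rm-newman2} carries exactly that hypothesis), and Lemma \ref{vaslemma} only bounds $Sr$ of ideals and quotients of $R$ from above by $Sr(R)$ --- it says nothing about the extension ring $\ZZ\oplus I$. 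So the very first step of your reduction is unsupported.

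Second, the correction from absolute to relative elementary matrices does not work the way you describe. If you replace $\bar\varepsilon_1$ by $\bar\varepsilon_1\cdot(f\circ q(\bar\varepsilon_1))^{-1}$ to land in $\E_n(\ZZ\oplus I)\cap\SL_n(\ZZ\oplus I,0\oplus I)=\E_n(\ZZ\oplus I,0\oplus I)$, the target row changes: $\tilde v\varepsilon_1$ becomes $(x,c_2,\ldots,c_n)$ multiplied by a lifted integer elementary matrix, and since the corrections for $\tilde v$ and $\tilde w$ are in general different, the last $n-1$ coordinates of the two corrected rows no longer agree --- which destroys exactly the conclusion you are trying to prove. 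Moreover the rows $(x,c_2,\ldots,c_n)$, $(y,c_2,\ldots,c_n)$ produced by Lemma \ref{mn} need not be congruent to $e_1$ modulo $0\oplus I$, so the Excision Theorem \ref{excision} cannot yet be invoked to upgrade absolute elementary equivalence to relative. The paper's proof of Lemma \ref{rm-newman2} spends its second half precisely on this point, using $\gcd$ manipulations over $\ZZ$ to bring \emph{both} rows simultaneously to the form $\equiv e_1\pmod{0\oplus I}$ by a common sequence of moves before applying Excision. Without an argument of that kind your second paragraph is an assertion, not a proof. I would either adopt the paper's direct stable-range argument, or, if you want to keep the Excision route, add the stable-dimension transfer to $\ZZ\oplus I$ as an explicit lemma and carry out the normalization over $\ZZ$ in detail.
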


\begin{proof} Let $v = (a_1, a_2, \ldots, a_n)$, $w = ( b_1, b_2,
\ldots, b_n) \in \Um_n(R, I)$ . Then $(a_1,  a_2,  a_3 \ldots,
a_{n-1}, b_1, b_2, \ldots,\\ b_{n-1},a_nb_n) \in \Um_{2n-1}(R)$. Since
$Sr(R) = 1 + Sd(R) \leq 2n-2$, we can find $c_1, c_2, \ldots, c_{n-1},
d_1, d_2, \ldots, \\ d_{n-1} \in R$ such that $(a_1 + c_1a_nb_n, a_2 +
c_2a_nb_n, \ldots, a_{n-1} + c_{n-1}a_nb_n, b_1 + d_1a_nb_n, b_2 +
d_2a_nb_n, \ldots, b_{n-1} + \\ d_{n-1} a_nb_n) \in
\Um_{2n-2}(R,I)$. We add multiples of $a_n, b_n$ {\it viz.}
$c_ia_nb_n$ to $a_i$ and $d_ia_nb_n$ to $b_i$, $1 \leq i \leq n -1$ to
assume that the the ideals $(a_1, a_2, \ldots,a_{n-1})$ and $(b_1,
b_2, \ldots, b_{n-1})$ are comaximal. Now adding a suitable $I$
linear combination of $a_1, a_2, \ldots a_{n-1}$ to $a_n$ and that of
$b_1, b_2, \ldots b_{n-1}$ to $b_n$ we can make $ b_n - a_n = a_1 -
b_1$. Therefore adding last coordinates to the first coordinates we
may assume that $a_1 = b_1$. We can do this by $\E_n(A, I)$ action since  for any $u = (1 +i_1, i_2, \ldots, i_n) \in \Um_n(R, I)$ we have 
\[ (1 + i_1 + i_n, i_2, \ldots, i_n) = u E_{n1}(1) =u E_{12}(i_n)E_{n2}(-i_1)E_{21}(1)E_{12}(-i_n)E_{n2}(i_1 + i_2 + i_n)E_{21}(-1). \]


Let $a_1 = b_1 = 1-\lambda, \lambda \in I$.  Now by elementary action
we change $v$ and $w$ to $v_1= (a_1, \lambda^2a_2, \ldots,
\lambda^2a_n)$ and  $w_1 = ( b_1, \lambda^2b_2, \ldots, \lambda^2b_n)$
respectively. Considering the row $(a_1,  \lambda^2a_2,  \lambda^2a_3
\ldots, \lambda^2a_{n-1}, b_1, \lambda^2b_2, \lambda^2b_3 \\, \ldots,
\lambda^2b_{n-1}, \lambda^4a_nb_n) \in \Um_{2n-1}(R)$ and arguing as
in the previous paragraph we change $v_1$, $w_1$  to $v_2 =  (a'_1,
a'_2, \ldots, a'_n)$ and $w_2 = ( b'_1, b'_2, \ldots, b'_n)$
respectively by adding multiples of $\lambda^4a_nb_n$  to the first
$n-1$ coordinates such that the ideals $(a'_1, a'_2,
\ldots,a'_{n-1})$ and $(b'_1, b'_2, \ldots , b'_{n-1})$ are
comaximal.  We have $a'_i \equiv b'_i \vpmod {\lambda^2}, i = 1, 2,
\ldots ,n-1$. Now adding suitable $I$ linear combinations of the first
$n-1$ coordinates to the last we may assume that $a'_n + b'_n =
\lambda$. We note that $a'_i - b'_i =  c_i\lambda^2 = c_i\lambda(a'_n
+ b'_n),  c_i \in R, i = 1, 2, \ldots ,n-1$. Therefore we can add
suitable $\lambda$ multiples of the last coordinate to the first
$n-1$ coordinates to have $a'_i = b'_i$ for $i = 1, 2, \ldots,
n-1$. Since $a'_1 \equiv 1 \vpmod \lambda$, $a'_n + b'_n$ is a unit
modulo the ideal generated by the first $n-1$ coordinates. 
\end{proof} 

\begin{lemma}\label{rm-newman2}{{\rm(}Relative Mennicke--Newman{\rm)} }
Let $R$ be a commutative noetherian ring  and $I$ an ideal in $R$ such that the 
$max(R)$ is a disjoin union of $V(I)$ and finitely many irreducible
closed sets $V_i$ each a noetherian topological space of dimension at
most $d$. Assume $d \leq 2n - 3$ and $n \geq 3$. Let $v, w \in \Um_n(R,
I)$. Then there exists $\varepsilon_1, \varepsilon_2  \in \E_n(R, I)$
such that $v\varepsilon_1 = (a, a_2, a_3, \ldots, a_{n})$ and
$w\varepsilon_2 = (b, a_2, a_3, \ldots, a_{n})$ .
\end{lemma}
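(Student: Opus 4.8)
The plan is to reduce Lemma~\ref{rm-newman2} to the already-established Lemma~\ref{rm-newman1} by bringing the two rows into the special form where they share all but the first coordinate. Lemma~\ref{rm-newman1} gives, for $v,w \in \Um_n(R,I)$, relative elementary matrices $\varepsilon_1,\varepsilon_2 \in \E_n(R,I)$ so that $v\varepsilon_1 = (a_1,\dots,a_{n-1},a)$ and $w\varepsilon_2 = (a_1,\dots,a_{n-1},b)$ with $a+b$ a unit modulo $(a_1,\dots,a_{n-1})$. The present lemma asks instead that the two rows agree in the \emph{last} $n-1$ coordinates, i.e.\ $v\varepsilon_1 = (a,a_2,\dots,a_n)$ and $w\varepsilon_2 = (b,a_2,\dots,a_n)$, with no constraint on $a+b$. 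So first I would apply Lemma~\ref{rm-newman1} to obtain the common tail $(a_1,\dots,a_{n-1})$ and the first coordinates $a,b$; then I would permute coordinates by a relative elementary transformation (valid for $n\geq 3$, since cyclic permutations lie in $\E_n(R)$ and, as in the proof of Lemma~\ref{a}, preserve the relative subgroup $\E_n(R,I)$) to move the distinguished coordinate into the first slot, so that the two rows become $(a,a_1,\dots,a_{n-1})$ and $(b,a_1,\dots,a_{n-1})$, which is exactly the desired form after relabeling $a_2=a_1,\dots,a_n=a_{n-1}$.

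The only subtlety is that the hypothesis here is not a bound on stable dimension of $R$ itself but the topological condition that $\max(R)$ is a disjoint union of $V(I)$ and finitely many irreducible noetherian closed sets $V_i$ of dimension at most $d$, with $d\leq 2n-3$. This is precisely the hypothesis under which van der Kallen's stable-range arguments still apply (cf.\ the discussion after Definition~\ref{s range, s dim} and the use of the condition $Sr(R)\le 2n-2$ in the proof of Lemma~\ref{rm-newman1}); the decomposition of $\max(R)$ controls the dimension of the relevant spectra so that rows of length $2n-1$ can be shortened. Thus I would re-run the comaximality-and-shortening argument of Lemma~\ref{rm-newman1} verbatim under this weaker hypothesis, observing that each step used only the stable-range estimate supplied by the decomposition, not a global Krull dimension bound.

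The main obstacle I anticipate is verifying that the coordinate permutation used to pass from the ``common head'' normal form of Lemma~\ref{rm-newman1} to the ``common tail'' normal form required here is realizable by a \emph{relative} elementary matrix in $\E_n(R,I)$, rather than merely an element of $\GL_n(R)$ or $\E_n(R)$. The cleanest way to handle this is the identity used in Lemma~\ref{a}: if $\sigma$ is the permutation matrix effecting the desired reordering, then conjugation $\sigma\,\E_n(R,I)\,\sigma^{-1} = \E_n(R,I)$, and one checks directly that applying $\sigma$ to a row $v\in\Um_n(R,I)$ (which is congruent to $e_1$ modulo $I$ on the relevant coordinates) again lands in $\Um_n(R,I)$ and differs from the original by a relative elementary factor. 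Once this bookkeeping is in place, the two $\varepsilon_i$'s produced by Lemma~\ref{rm-newman1}, post-composed with $\sigma$, give the required $\varepsilon_1,\varepsilon_2\in\E_n(R,I)$, and the conclusion follows immediately.
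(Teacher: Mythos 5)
There are two genuine gaps here, and together they mean the reduction you propose does not work; the paper's own proof takes a different (Excision-based) route precisely to avoid both.

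First, you cannot invoke Lemma \ref{rm-newman1}, nor re-run its proof ``verbatim'', under the hypotheses of Lemma \ref{rm-newman2}. The proof of Lemma \ref{rm-newman1} shortens a row of length $2n-1$ over $R$ using $Sr(R)=1+Sd(R)\leq 2n-2$, i.e.\ it needs a stable range bound on $R$ itself. The hypothesis of Lemma \ref{rm-newman2} only controls the part of $\max(R)$ \emph{away from} $V(I)$; the closed set $V(I)$ may have arbitrarily large dimension, so $Sd(R)$ is not bounded by $d$ and the very first shortening step fails. What the hypothesis does give (Remark \ref{vr}) is that the Excision ring $\ZZ\oplus I$ has stable dimension at most $d$. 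Accordingly the paper lifts $v,w$ to $\tilde v,\tilde w\in \Um_n(\ZZ\oplus I, 0\oplus I)$, applies the \emph{absolute} Mennicke--Newman Lemma \ref{mn} over $\ZZ\oplus I$, performs further $\E_n(\ZZ)$-manipulations to bring both normal forms back into $\Um_n(\ZZ\oplus I,0\oplus I)$, uses the bijectivity statement of the Excision Theorem \ref{excision} to replace the elementary matrices by ones in $\E_n(\ZZ\oplus I,0\oplus I)$, and only then projects to $R$ via $f$. None of this is captured by your plan.

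Second, the permutation step is not legitimate in the relative setting. A permutation matrix $\sigma$ is not congruent to $I_n$ modulo $I$, hence does not lie in $\E_n(R,I)\subseteq\GL_n(R,I)$; and if $u=(a_1,\dots,a_{n-1},a)\in\Um_n(R,I)$, so that $u\equiv e_1\pmod I$, then the permuted row $(a,a_2,\dots,a_{n-1},a_1)$ is congruent to $(0,\dots,0,1)$ modulo $I$ and is not even an element of $\Um_n(R,I)$. The identity $\sigma\,\E_n(R,I)\,\sigma^{-1}=\E_n(R,I)$ from the proof of Lemma \ref{a} concerns conjugation of the group and does not let you act on the row by $\sigma$ while staying in the relative orbit. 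Passing between the ``common head'' and ``common tail'' normal forms, like the equivalence of conditions (1)--(3) in the definition of niceness, is a genuinely nontrivial matter in the relative case (the paper stresses that niceness ``does not depend on which coordinate'' only as a consequence of such arguments), and it is handled by the Excision mechanism rather than by permuting coordinates.
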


\begin{proof}
Note that if $d =0, 1$ then any unimodular row of length atleast $3$ is elementarily completable. So the result follows obviously. Therefore we shall assume that $2 \leq d \leq 2n - 3$. By Remark \ref{vr} stable dimension of $\ZZ \oplus I$ is atmost $d$. We choose  $\tilde{v}, \tilde{w} \in 
\Um_n(\ZZ \oplus I, 0 \oplus I)$  such that $[\tilde{v}], [\tilde{w}] \in 
\MSE_n(\ZZ \oplus I, I)$ correspond to $[v], [w] \in \MSE_n(R, I)$ by $F$ 
(see Theorem \ref{excision}) respectively. Now by Lemma \ref{mn} we have 
$\varepsilon_1, \varepsilon_2 \in \E_n(\ZZ \oplus I)$ such that  
$\tilde{v}\varepsilon _1 = (\tilde{a}, \tilde{a_2}, \ldots, \tilde{a_n})$ and 
$\tilde{w}\varepsilon_2 = (\tilde{b}, \tilde{a_2}, \ldots, \tilde{a_n}), 
\tilde{a} + \tilde{b} = 1$.
Assume $\tilde{a_i} \equiv n_i \vpmod I$ for $i \geq 2$ and $c = g.c.d(n_i)$. 
Then by further elementary action (infact by $\E_n(\ZZ)$ action) we may assume 
that $(\tilde{a_2},\tilde{a_3}, \ldots, \tilde{a_n}) \equiv 
(c, 0, \ldots,0) \vpmod I$. Adding $\tilde{a}\tilde{b}$ to the third 
coordinate we have $(\tilde{a_2},\tilde{a_3}, \ldots, \tilde{a_n}) \equiv 
(c, e, \ldots,0) \vpmod I$ such that $g.c.d(c, e) =1$. Now we shall add 
suitable $\ZZ$ linear combination of $\tilde{a_2}, \tilde{a_3}$ to 
$\tilde{a_1}$ to have $\tilde{a} \equiv \tilde{b} \equiv 1 \vpmod I$. Then we 
shall add a suitable $\ZZ$ multiples $\tilde{a}\tilde{b}$  to the rest and 
have $(\tilde{a}, \tilde{a_2}, \ldots, \tilde{a_n}) \equiv (\tilde{b}, 
\tilde{a_2}, \ldots, \tilde{a_n}) \equiv e_1 \vpmod I$.

Thus we have $\varepsilon'_1, \varepsilon'_2 \in \E_n(\ZZ \oplus I)$ such that $\tilde{v}\varepsilon'_1 = \tilde{v}' = (\tilde{a}, \tilde{a_2}, \ldots, \tilde{a_n}), \tilde{w}\varepsilon'_2 = \tilde{w}' = (\tilde{b}, \tilde{a_2}, \ldots, \tilde{a_n})$. Note that $\tilde{v}, \tilde{v'}, \tilde{w}, \tilde{w'} \in 
\Um_n(\ZZ \oplus I,  0 \oplus I)$. So by Excision Theorem \ref{excision} we have $\varepsilon''_1, \varepsilon''_2 \in \E_n(\ZZ \oplus I,  0 \oplus I)$ such that 
$\tilde{v}\varepsilon''_1 = \tilde{v}',  \tilde{w}\varepsilon''_2 = \tilde{w}'$. Now the result follows taking projection onto $R$ under 
$f: \ZZ \oplus I \rightarrow R$ defined by $f(n, i) = n + i$.
\end{proof}


\section{Niceness of relative orbit space group}  In this section we
shall first establish the relative analogue of results in the previous
section. By Remark \ref{vr} and \ref{gs3} we know that $\MSE_d(A, I), d \geq 3$
has a group structure given by $Vv$ rule whenever $A$ is a nonsingular
affine algebra.  

\begin{theorem}\label{relative-affine-three-fold}  Let $A$ be a smooth
affine algebra of dimension $3$ over an algebraically closed field $k$
of characteristic not equal to $2, 3$ and $I$ a principal ideal in $A$. Then the group structure on the orbit space
$\Um_3(A, I)/\E_3(A, I)$ is nice. 
\end{theorem}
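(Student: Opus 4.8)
The plan is to mimic the proof of the absolute case (Theorem~\ref{affine-three-fold}) inside the relative orbit space $\MSE_3(A,I)$. Note that the Relative Niceness Criterion (Lemma~\ref{nice}) is \emph{not} available here: it would reduce niceness of $\MSE_3(A,I)$ to that of $\MSE_3(A\oplus I)$, but the Excision ring $A\oplus I$ is in general singular, so Theorem~\ref{affine-three-fold} cannot be invoked for it, and one must argue by hand. The group structure on $\MSE_3(A,I)$ (abelian, $Vv$ rule) is already in place by Remark~\ref{gs3}, so only the coordinate-wise formula is at issue. First I would apply the relative $Vv$ rule to rewrite the product: choosing $(p,p_2,p_3)\in\Um_3(A,I)$ with $ap+a_2p_2+a_3p_3=1$, one gets $[(b,a_2,a_3)]\ast[(a,a_2,a_3)]=[u]$ with $u=(a(b+p)-1,\,a_2(b+p),\,a_3)$. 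Thus the theorem reduces to the single identity $[u]=[z]$ in $\MSE_3(A,I)$, where $z=(ab,a_2,a_3)$.

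The next observation is that this identity already holds \emph{absolutely}: by Theorem~\ref{affine-three-fold} the group structure on $\MSE_3(A)$ is nice, and since the absolute $Vv$ rule gives the same formula for the product, $[u]=[z]$ in $\MSE_3(A)$. So $u$ and $z$ are absolutely elementarily equivalent, and the entire task is to \emph{descend} this equivalence to the relative orbit space. The bridge is Lemma~\ref{a}: it suffices to produce a single matrix $\beta\in\SL_3(A,I)\cap\E_4(A,I)$ with $u\beta=z$. The principal ideal hypothesis enters decisively through Lemma~\ref{ris}, which for the nonsingular threefold $A$ and $I=(a)$ gives $\SL_4(A,I)\cap\E(A,I)=\E_4(A,I)$; stabilising (embedding size $3$ into size $4$) then shows $\SL_3(A,I)\cap\E(A,I)=\SL_3(A,I)\cap\E_4(A,I)$, so it is enough to find $\beta\in\SL_3(A,I)\cap\E(A,I)$ carrying $u$ to $z$.

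To build $\beta$ I would reproduce the geometric heart of the absolute proof relatively. Using Swan's version of Bertini (\cite{Swan}) together with the relative Mennicke--Newman normal form (Lemma~\ref{rm-newman1}), I would replace $u,z$ by relatively equivalent rows sharing a first coordinate $c_1\equiv1\pmod I$ and pass to $\overline A=A/(c_1)$. Because $c_1\equiv1\pmod I$, the ideals $(c_1)$ and $I$ are comaximal, so $I$ becomes the unit ideal in $\overline A$ (the relative structure degenerates to the absolute one) and, crucially, the relevant Bertini linear system is base-point free (its base locus is the empty variety of a unimodular row), so $\overline A$ may be taken to be a smooth affine surface. On $\overline A$ an analogous Mennicke-symbol computation yields $\overline\sigma\in\SL_2(\overline A)\cap\E_3(\overline A)$ matching the reductions of $u$ and $z$. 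Combining Lemma~\ref{sp4} with Lemma~\ref{lift} lifts $\overline\sigma$ to $\SL_2(A)$, and the comaximality of $(c_1)$ and $I$ lets me choose the lift $\sigma\in\SL_2(A,I)$ to be relative; assembling $\sigma$ with the relative elementary corrections that absorb the $c_1$-multiples then produces the desired $\beta$, whence $[u]=[z]$ in $\MSE_3(A,I)$ by Lemma~\ref{a}.

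The main obstacle is exactly this last descent. Unlike the absolute proof, where one mods out by the \emph{shared} last coordinate of $u$ and $z$ (free to be chosen Bertini-generic), the relative rows share a coordinate lying in $I$, so modding out by it can never give a smooth surface; this is the concrete reason the Relative Niceness Criterion fails and the source of all the difficulty. The fix---reducing instead modulo a coordinate congruent to $1$ modulo $I$, which uses that $I$ is principal so that comaximality makes the quotient both smooth and ``absolute''---is what makes the surface datum liftable, and upgrading the lift $\sigma$ to a \emph{relative} matrix (so that the assembled $\beta$ genuinely lies in $\SL_3(A,I)\cap\E(A,I)$, where Lemma~\ref{ris} and Lemma~\ref{a} can be applied) is the step requiring the most care.
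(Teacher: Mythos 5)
Your overall architecture tracks the paper's up to the point where the lifted matrix must be exploited: the group structure on $\MSE_3(A,I)$ comes from Remark \ref{gs3} (this is where the principality of $I$ is really used), the product is rewritten by the $Vv$ rule, Swan--Bertini is applied so that the quotient by a coordinate congruent to $1$ modulo $I$ is a smooth affine surface (you correctly identify that reducing modulo a shared coordinate lying in $I$ would be useless), and the Mennicke-symbol computation on that surface together with Lemma \ref{sp4} and Lemma \ref{lift} produces $\overline{\sigma}\in\SL_2(\overline{A})\cap\E_3(\overline{A})$ with a lift $\sigma\in\SL_2(A)$. The divergence, and the gap, lies in how you convert $\sigma$ into a relative elementary equivalence. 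You propose to assemble $\beta\in\SL_3(A,I)\cap\E(A,I)$ with $u\beta=z$ and then invoke Lemma \ref{ris} and Lemma \ref{a}. But your $\beta$ is essentially $\varepsilon_1(1\perp\sigma)\varepsilon_2$ with $\varepsilon_i$ relative elementary, so $\beta\in\E(A,I)$ would force $1\perp\sigma$ to be stably elementary over $A$; nothing gives this. Lemma \ref{lift} only says $\overline{\sigma}$ is stably elementary over the quotient $\overline{A}$, while over $A$ itself $\sigma$ is merely special linear (there is an $\SK_1(A)$-type obstruction to more). Likewise ``choose the lift $\sigma\in\SL_2(A,I)$'' is unjustified: Lemma \ref{lift} lands in $\SL_2(A)$ only, and comaximality of $(c_1)$ and $I$ does not let you correct the lift modulo $I$ inside $\SL_2$ (that would require surjectivity of $\SL_2(A,(c_1))\to\SL_2(A/I)$, which is not available). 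So the descent step, which you rightly flag as the crux, is not actually carried out.

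The paper closes this by never asking $\sigma$ to be either elementary or relative. Writing $a_1'=1-\eta$ with $\eta\in I$, one has $(a_2(b+p),a(b+p)-\eta)\sigma\equiv(a_2,ab)\pmod{Ia_1'}$ automatically, because both rows have entries in $I$ and $a_1'$ is a unit modulo $I$, so $(a_1')\cap I=Ia_1'$; hence only relative elementary moves (adding $I$-multiples of $a_1'$) are needed to pass between them. The key trick is then to replace $\sigma$ by $\eta\sigma$, which changes nothing modulo $Ia_1'$ but makes the correction row $(a_1',e_1\eta\sigma)$ a genuine element of $\Um_3(A,I)$; the $Vv$ rule splits $[(a_1',(a_2(b+p),a(b+p)-\eta)\eta\sigma)]$ as $[(a_1',a_2(b+p),a(b+p)-\eta)]\ast[(a_1',e_1\eta\sigma)]$, and the explicit identity $(a_1',e_1\eta\sigma)=e_1(1\perp\sigma)^{-1}E_{21}(1)E_{12}(\eta)E_{21}(-1)(1\perp\sigma)\in e_1\E_3(A,I)$ kills the second factor. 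This $\eta$-scaling device is the ingredient your argument is missing; without it, or some substitute proof that $1\perp\sigma$ is stably elementary relative to $I$, your appeal to Lemma \ref{a} does not apply and the proof does not close.
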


\begin{proof} By Remark \ref{gs3} we already have a group structure on
$\MSE_3(A, I)$ given by $Vv$ rule  . Let $[v] = [(a_1, a_2, a)]$, $[w]
= [(a_1, a_2, b)]$, $a_1 = 1- \lambda, \lambda \in I$. We shall show that $[w]\ast [v] =
[(a_1, a_2, ab)]$. Note that $(a_1, \lambda a_2, \lambda ab) \in
\Um_3(A, I)$ Applying Swan's version of Bertini's Theorem in
(\cite{Swan}), we can add a general linear combination of $\lambda ab,
\lambda a_2$ to $a_1$ changing it to $a_1'$ and assume that $A/(a_1')$
is a smooth affine surface. Let $a_1' = 1 - \eta, 
\eta \in I$. Now by $Vv$ rule we
have
$$[w]\ast [v] = [(a_1', a_2, b)]\ast [(a_1', a_2, a)] =  
[(a_1', a_2(b+p), a(b+p) - \eta)].$$
where $(b_1, b_2, p) \in \Um_3(A, I)$  such that $a_1b_1+ a_2b_2 + ap
= 1$. Now in  $\overline{A}:= A/(a_1')$ we have $\overline{\eta} =
1$. Arguing as in Theorem \ref{affine-three-fold} we have
$\overline{\sigma} \in \SL_2(\overline{A}) \cap \E_3(\overline{A})$
such that  $ ( \overline{a_2}(\overline{b}+\overline{p}),
\overline{a}(\overline{b}+\overline{p}) -
\overline{\eta})\overline{\sigma} = ( \overline{a_2},
\overline{a}\overline{b})$. By Lemma \ref{lift}, $\overline{\sigma}$
has a lift $\sigma \in \SL_2(A)$. Clearly  $(a_2(b+p), a(b+p) -
\eta)\sigma \equiv (a_2, ab) \vpmod {Ia_1'}$. So 
\begin{eqnarray*} [(a_1', a_2, ab)]  &=& [(a_1', a_2(b+p), a(b+p) -
\eta) \begin{pmatrix}1 & 0\\ 0 & \sigma
                                                                    \end{pmatrix}]\\
&=& [(a_1', a_2(b+p), a(b+p) - \eta) \begin{pmatrix}1 & 0\\ 0 & \eta
\sigma
                                                                    \end{pmatrix}]\\
&=& [(a_1', a_2(b+p), a(b+p) - \eta)]\ast [(a_1', e_1 \eta \sigma)] ; \;
\text{by $Vv$ rule} \\ &=& [(a_1', a_2(b+p), a(b+p) - \eta)].
\end{eqnarray*} The last equality is true since $(a_1', e_1 \eta
\sigma) = e_1(1\perp \sigma)^{-1}E_{21}(1)E_{12}(\eta)E_{2
1}(-1)(1\perp \sigma) \in e_1\E_3(A, I)$.  Therefore we have 
\begin{eqnarray*} [(a_1, a_2, b)]\ast [(a_1, a_2, a)] &=& [(a_1', a_2,
b)]\ast [(a_1', a_2, a)]\\ &=&[(a_1', a_2(b+p), a(b+p) - \eta)]\\ &=&
[(a_1', a_2, ab)]\\ &=&[(a_1, a_2, ab)].
\end{eqnarray*}
\end{proof}

\begin{theorem}\label{main relative nice}  Let $A$ be a smooth affine
algebra over an algebraically closed field $k$ of characteristic not
equal to $2, 3$ of dimension $d \geq 4$ and $I$ an ideal in $A$. Then,
the group structure on the orbit space $\MSE_d(A, I)$ is
nice, i.e.  it is given by the `coordinate-wise  multiplication'
formula: 
$$[(a_1, a_2, \ldots, a_{d-1}, a)]\ast [(a_1, a_2, \ldots, a_{d-1}, b)] = 
[(a_1, a_2, \ldots, a_{d-1}, ab)].$$ 
\end{theorem}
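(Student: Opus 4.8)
The plan is to run an induction on the dimension $d$, exactly paralleling the structure of Theorem \ref{main nice} (the absolute case) but carrying the relative ideal $I$ along at every step, and using the relative three-fold result Theorem \ref{relative-affine-three-fold} as the base of a slightly shifted induction. Concretely, suppose $v=(a_1,\ldots,a_{d-1},a)$ and $w=(a_1,\ldots,a_{d-1},b)$ lie in $\Um_d(A,I)$ with $a_1=1-\lambda$, $\lambda\in I$. First I would normalize the last coordinate: applying R. G. Swan's version of Bertini's theorem as in \cite{Swan}, add a general linear combination of $ab,a_2,\ldots,a_{d-1}$ to $a_1$, changing it to $a_1'$ so that $A/(a_1')$ is a smooth affine algebra of dimension $d-1$; here one must take care that the modification is by an $I$-linear combination (so as to stay inside the relative orbit space) and that $a_1'=1-\eta$ with $\eta\in I$. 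Since $A$ is smooth of dimension $d\ge 4$, a group structure on $\MSE_d(A,I)$ is available via Remark \ref{vr} together with Remark \ref{gs3}, so we may compute $[w]\ast[v]$ by the $Vv$ rule.

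The main engine is reduction modulo $a_1'$. Choosing $p$ with $ap\equiv 1\pmod{(a_2,\ldots,a_{d-1},a_1')}$, the $Vv$ rule gives
\begin{equation*}
[w]\ast[v]=[(a_1',a_2(b+p),a_3,\ldots,a_{d-1},a(b+p)-\eta)],
\end{equation*}
and passing to $\overline{A}:=A/(a_1')$, where $\overline{\eta}=\overline{1}$, I would invoke the relative induction hypothesis in dimension $d-1$ (the coordinate-wise formula for $\MSE_{d-1}(\overline A,\overline I)$, valid by the inductive step and, at the bottom, by Theorem \ref{relative-affine-three-fold}) to identify the reduced row with $(\overline{a_2},\ldots,\overline{a_{d-1}},\overline{ab})$ up to relative elementary action. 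This produces a matrix $\overline\sigma\in\SL_{d-1}(\overline A)\cap\E_d(\overline A)$ carrying the reduced row to the target, and one lifts it: here I would combine Lemma \ref{sp4} and Lemma \ref{lift} to obtain $\sigma\in\SL_{d-1}(A)$ lifting $\overline\sigma$, just as in the proof of Theorem \ref{affine-three-fold}.

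Having the lift, I would compute, using Lemma \ref{rao} (the $[v\cdot g]=[v]+[e_1g]$ formula) to absorb the lifted matrix, and using the relative Witt/nice-identity trick from the proof of Theorem \ref{relative-affine-three-fold} --- namely that a row of the shape $(a_1',e_1\,\eta\sigma)$ is relatively elementarily trivial because it can be written as $e_1(1\perp\sigma)^{-1}E_{21}(1)E_{12}(\eta)E_{21}(-1)(1\perp\sigma)\in e_1\E_d(A,I)$ --- to conclude
\begin{equation*}
[(a_1',a_2,\ldots,a_{d-1},ab)]=[(a_1',a_2(b+p),a_3,\ldots,a_{d-1},a(b+p)-\eta)].
\end{equation*}
Undoing the Bertini modification then yields $[w]\ast[v]=[(a_1,a_2,\ldots,a_{d-1},ab)]$, which is the nice formula (equivalence of the three forms of niceness is already established). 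I expect the main obstacle to be the bookkeeping that keeps everything \emph{relative} to $I$ at each reduction: one must verify that the Bertini perturbation, the choice of $p$, and the lift $\sigma$ all respect the congruence $\equiv e_1\pmod I$, so that every equality genuinely holds in $\MSE_d(A,I)$ rather than merely in $\MSE_d(A)$; the singular-lifting input (Lemma \ref{sp4}, which supplies $\Um_4=e_1\Sp_4$ even relatively) is what makes the lifting step go through despite $A/(a_1')$ not being controllable by smoothness alone in the relative setting.
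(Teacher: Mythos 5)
Your overall skeleton (Bertini perturbation of $a_1$ into $a_1'$ with $A/(a_1')$ smooth of dimension $d-1$, the $Vv$-rule computation, reduction modulo $a_1'$, lifting back) matches the paper, but the two load-bearing choices you make inside that skeleton differ from the paper's and both are problematic. First, the paper does \emph{not} run an induction on the relative statement: after reducing modulo $a_1'$ it treats the rows $(\overline{a_2},\ldots,\overline{a_{d-1}},\overline{a})$ and $(\overline{a_2},\ldots,\overline{a_{d-1}},\overline{b})$ in the \emph{absolute} orbit space $\MSE_{d-1}(\overline{A})$ and applies the already-established absolute niceness (Theorems \ref{affine-three-fold} and \ref{main nice} for the smooth affine algebra $\overline{A}$ of dimension $d-1\geq 3$). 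Your relative induction instead bottoms out at Theorem \ref{relative-affine-three-fold}, which is only proved for a \emph{principal} ideal; the induced ideal $\overline{I}=(I+(a_1'))/(a_1')$ in the quotient has no reason to be principal, so your base case is unavailable and the induction collapses at $d-1=3$. This is a genuine gap, and it is precisely what the paper's switch to the absolute theorem modulo $a_1'$ is designed to avoid.

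Second, your lifting step misapplies the symplectic machinery. Lemma \ref{lift} (fed by Lemma \ref{sp4} and Theorem \ref{fasel}) lifts a matrix in $\SL_2(A/(a))\cap\E_3(A/(a))$ over a smooth \emph{surface} quotient of a \emph{threefold}, via $\Sp_4$; it says nothing about $(d-1)\times(d-1)$ matrices when $d\geq 4$. Fortunately it is also unnecessary there: once the two reduced rows agree in $\MSE_{d-1}(\overline{A})$, the connecting matrix is an elementary matrix of size $d-1\geq 3$, and elementary generators lift along the surjection $A\twoheadrightarrow A/(a_1')$ (the paper records the lift as some $\alpha\in\E_{d-1}(I)$ with $(a_2,\ldots,a_{d-2},a_{d-1}(b+p),a(b+p)-\eta)\alpha\equiv(a_2,\ldots,a_{d-1},ab)\pmod{Ia_1'}$). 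The $(a_1',e_1\eta\sigma)$ conjugation trick you import from the proof of Theorem \ref{relative-affine-three-fold} is likewise only needed in dimension $3$, where the lift $\sigma$ is merely in $\SL_2(A)$ rather than elementary. To repair your argument, replace the relative induction hypothesis by the absolute Theorem \ref{main nice} applied to $\overline{A}$, and replace the Lemma \ref{sp4}/\ref{lift} lifting by the direct lifting of elementary matrices.
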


\begin{proof} Let $v= (a_1,a_2, \ldots, a_{d-1}, a)$, $w=(a_1, a_2, \ldots, a_{d-1}, b)$ be such
that $v, w \in \Um_d(A, I)$, $a_1 = 1 - \lambda$. Then  $(a_1,\lambda
a_2, \ldots, \lambda a_{d-1}, \lambda ab) \in  \Um_n(A, I)$.  Applying
R. G. Swan's version of Bertini's Theorem in (\cite{Swan}) we can add a
general linear combination of $\lambda ab, \lambda a_{d-1}, \lambda
a_{d-2} \ldots, \lambda a_{2}$ to $a_1$ changing it to $a_1'$ and
assume that $A/(a_{1}')$ is a smooth affine algebra of dimension
$d-1$. Let $a_1' = 1 - \eta$. Note that in $\overline{A} = A / a_1'$
we have $\overline{\eta} = 1$. We already have a group structure on
$\MSE_{d}(A, I), d \geq 4$ by Remark \ref{vr} 
.  We choose $(b_1,
b_2, \ldots, b_{d-1}, p)\in \Um_d(A, I)$ such that $a_1'b_1 +
\Sigma_{i=2}^{d-1}a_ib_i + ap = 1$. Then 
\begin{eqnarray*} [w]\ast [v]& =& [(a_1', a_2, \ldots, a_{d-1},
b)]*[(a_1', a_2, \ldots, a_{d-1}, a)]\\ &=& [(a_1', a_2, \ldots,
a_{d-2}, a_{d-1}(b+p), a(b+p) - \eta))].
\end{eqnarray*} Going modulo $a_{1}'$, we have
\begin{eqnarray*} [( \overline{a_2}, \ldots, \overline{a_{d-2}},
\overline{a_{d-1}}(\overline{b}+\overline{p}),
\overline{a}(\overline{b}+\overline{p}) - \overline{\eta}))] =&&[(
\overline{a_2}, \ldots, \overline{a_{d-1}}, \overline{b})] \ast  [(
\overline{a_2}, \ldots, \overline{a_{d-1}}, \overline{a})] \quad
\text{by Remark \ref{igs}} \\ =&&[(\overline{a_2}, \overline{a_3},
\ldots, \overline{a_{d-1}}, \overline{ab}, )] \quad \text{by Theorems
 \ref{affine-three-fold}, \ref{main nice}.}
\end{eqnarray*} Therefore we have  $(a_2, \ldots, a_{d-2},
a_{d-1}(b+p), a(b+p) - \eta))\alpha \equiv (a_2, \ldots, a_{d-1},
ab)  \vpmod{Ia_1'}$ for some $\alpha \in \E_{d-1}(I)$. So 
\begin{eqnarray*} [(a_1', a_2, \ldots, a_{d-2}, a_{d-1}(b+p), a(b+p) -
\eta))]&=&[(a_1', a_2, \ldots, a_{d-1}, ab)] \\ &=& [(a_1, a_2,
\ldots, a_{d-1}, ab)]; \, \text{in} \;  \MSE_d(A, I).
\end{eqnarray*}
\end{proof}

As a consequence one can deduce as in Corollary \ref{cor:completable-affine-d}:

\begin{corollary}\label{cor:completable-affine-dd}  Let $A$ be a smooth
affine algebra of dimension $d \geq 3$ over an algebraically closed
field $k$, with ${\rm char}~k \neq 2, 3$. $I$ satisfies conditions given in  Theorems \ref{relative-affine-three-fold} and \ref{main relative nice} depending on $d = 3$ or $d > 3$ respectively.  Let $\sigma \in \SL_{d}(A, I)
\cap \E_{d+1}(A, I)$. Then, $[e_1\sigma] = [e_1]$ in $\MSE_{d}(A,
I)$ i.e. $e_1\sigma$ is relatively elementarily completable to $e_1$.  
\end{corollary}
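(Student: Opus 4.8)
The plan is to mirror the proof of Corollary~\ref{cor:completable-affine-d}, lowering every length by one and feeding in the niceness of $\MSE_d(A,I)$ established in Theorems~\ref{relative-affine-three-fold} and~\ref{main relative nice} in place of Theorem~\ref{nice affine}. The engine is van der Kallen's Theorem~\ref{v}, applied with $n=d$ and $i=j=0$. First I would check its hypotheses: since $A$ is a noetherian affine algebra of Krull dimension $d$ we have $Sd(A)\le d$, and $d\le 2d-3$ precisely because $d\ge 3$; moreover $\sigma\in\SL_d(A,I)\cap\E_{d+1}(A,I)\subseteq\GL_d(A)\cap\E_{d+1}(A,I)$, which is exactly $\GL_{n+i}(A)\cap\E_{n+i+j+1}(A,I)$ for $n=d$, $i=j=0$. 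Theorem~\ref{v} then produces scalars $u\in I$ and $q\in A$, a row $v$ and a column $w$ with entries in $I$, and a $(d-1)\times(d-1)$ matrix $M$ over $I$, such that
$$\begin{pmatrix} 1+uq & v \\ wq & I_{d-1}+M \end{pmatrix}\in\sigma\,\E_d(A,I),\qquad \begin{pmatrix} 1+qu & qv \\ w & I_{d-1}+M \end{pmatrix}\in\E_d(A,I).$$

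From these two memberships I would extract two statements in $\MSE_d(A,I)$. The first matrix equals $\sigma\varepsilon$ for some $\varepsilon\in\E_d(A,I)$, so its first row $(1+uq,v)$ represents the same relative orbit as $e_1\sigma$; hence $[e_1\sigma]=[(1+uq,v)]$. The second matrix lies in $\E_d(A,I)$, so its first row $(1+uq,qv)$ (using $qu=uq$) is relatively elementarily equivalent to $e_1$; hence $[(1+uq,qv)]=[e_1]$. Both rows are genuine elements of $\Um_d(A,I)$ because $u,v$ have entries in $I$, forcing them to be $\equiv e_1\vpmod I$.

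It remains to identify $[(1+uq,v)]$ with $[(1+uq,qv)]$, for then $[e_1\sigma]=[e_1]$ and we are done. Here I would use the coordinate-wise (nice) multiplication on $\MSE_d(A,I)$ furnished by Theorem~\ref{relative-affine-three-fold} when $d=3$ and by Theorem~\ref{main relative nice} when $d\ge 4$, combining the class $[(1+uq,v)]$ with the identity so as to collapse the tail $v$ to $qv$, exactly as in Corollary~\ref{cor:completable-affine-d}. I expect this last identification to be the only real obstacle: in the relative setting $q$ has entries in $A$ rather than in $I$, so the naive intermediate rows one would use to apply the nice rule one coordinate at a time need not be relative unimodular. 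The way around it is the same device that already makes Corollary~\ref{cor:completable-affine-d} work, namely the conjugacy relation between the two matrices of Theorem~\ref{v} (the second is the formal conjugate of the first by $\operatorname{diag}(q,I_{d-1})$), which is precisely what the twin conclusions of Theorem~\ref{v} are designed to encode; once $[(1+uq,v)]=[(1+uq,qv)]$ is in hand the remaining manipulations are routine.
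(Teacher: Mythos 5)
Your proposal is correct and follows the paper's own route exactly: the paper proves this corollary simply by saying ``deduce as in Corollary \ref{cor:completable-affine-d}'', i.e.\ apply Theorem \ref{v} with $n=d$, $i=j=0$, read off $[e_1\sigma]=[(1+uq,v)]$ and $[(1+qu,qv)]=[e_1]$ from the two block matrices, and bridge the two classes using the niceness of $\MSE_d(A,I)$ supplied by Theorems \ref{relative-affine-three-fold} and \ref{main relative nice}. The one step you flag as delicate --- passing from $[(1+uq,v)]$ to $[(1+uq,qv)]$ when $q$ is not in $I$ --- is treated no more explicitly in the paper's model proof (which simply writes $[(1+uq,v)]\ast[(1+uq,q)]$ and declares $[(1+uq,q)]$ to be the identity), so your account matches the paper's argument and its level of detail.
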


\begin{corollary}
$\MSE_d(A, I) = \WMS_d(A, I) = \MS_d(A, I)$ where $A$ and $I$ satisfy conditions given in  Theorems \ref{relative-affine-three-fold} and \ref{main relative nice} depending on $d = 3$ or $d > 3$ respectively.
\end{corollary}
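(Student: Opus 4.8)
The plan is to prove the two equalities separately: first the identification $\MSE_d(A,I)=\WMS_d(A,I)$, and then, using niceness, $\WMS_d(A,I)=\MS_d(A,I)$. I would work with the relative universal weak Mennicke symbol $\wms\colon\MSE_d(A,I)\to\WMS_d(A,I)$ of Definition \ref{weak symbol} and the relative universal Mennicke symbol $\ms\colon\MSE_d(A,I)\to\MS_d(A,I)$, and begin by recording the one structural comparison I need: every Mennicke symbol is in particular a weak Mennicke symbol. Indeed, in a Mennicke symbol the multiplicativity relation together with the coordinate-shift relation forces $\ms(r,v_2,\dots,v_n)\,\ms(1+q,v_2,\dots,v_n)=\ms(r(1+q),v_2,\dots,v_n)=\ms(q,v_2,\dots,v_n)$ whenever $r(1+q)\equiv q \vpmod{(v_2,\dots,v_n)}$, which is exactly the weak relation of Definition \ref{weak symbol}. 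Hence $\ms$ is a weak Mennicke symbol, and the universal property defining $\WMS_d(A,I)$ supplies a canonical surjective homomorphism $\Phi\colon\WMS_d(A,I)\twoheadrightarrow\MS_d(A,I)$ with $\Phi\circ\wms=\ms$. The whole statement then reduces to showing (a) that $\wms$ is bijective, and (b) that $\wms$ itself is multiplicative, for once $\wms$ is a Mennicke symbol the universal property of $\ms$ produces an inverse of $\Phi$.

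For step (a) I would split on the dimension, noting first that by the van der Kallen lemma recalled just before Theorem \ref{wms} together with Definition \ref{weak symbol}, $\wms$ is always a surjective homomorphism from $(\MSE_d(A,I),\ast)$, so only injectivity is at issue. When $d\geq 4$ I would pass through the excision ring $B=\ZZ\oplus I$: by Double Excision (Lemma \ref{lem:bijection}) we have $\MSE_d(A,I)\cong\MSE_d(B)$, and correspondingly the relative weak symbol is the one transported from $B$; by Remark \ref{vr} the stable dimension of $B$ is at most $d\leq 2d-4$, so Theorem \ref{wms} gives bijectivity of $\wms$ for $B$, hence for the relative symbol. When $d=3$ the inequality $d\leq 2n-4$ fails, and here I would instead invoke Remark \ref{gs3} and Theorem \ref{witt relative}, which equip $\MSE_3(A,I)\cong\MSE_3(\ZZ\oplus I)$ with an abelian Witt group structure given by $Vv$ rule and identify it with $W_E(\ZZ\oplus I)$; this identification is precisely the relative $\wms$ being an isomorphism onto $\WMS_3$.

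For step (b), this is where niceness enters. By Theorem \ref{relative-affine-three-fold} (for $d=3$) and Theorem \ref{main relative nice} (for $d\geq 4$), the group structure on $\MSE_d(A,I)$ is nice, i.e. $[(a_1,\dots,a_{d-1},a)]\ast[(a_1,\dots,a_{d-1},b)]=[(a_1,\dots,a_{d-1},ab)]$. Transporting this identity along the isomorphism $\wms$ of step (a), and using that $\wms$ carries $\ast$ to the group law of $\WMS_d(A,I)$, niceness reads exactly as the multiplicativity relation $\wms(a_1,\dots,a_{d-1},a)\,\wms(a_1,\dots,a_{d-1},b)=\wms(a_1,\dots,a_{d-1},ab)$. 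Thus $\wms$ is a Mennicke symbol, so the universal property of $\ms$ yields $\Psi\colon\MS_d(A,I)\to\WMS_d(A,I)$ with $\Psi\circ\ms=\wms$; then $\Phi$ and $\Psi$ are mutually inverse, whence $\WMS_d(A,I)=\MS_d(A,I)$. Combining with step (a) gives $\MSE_d(A,I)=\WMS_d(A,I)=\MS_d(A,I)$.

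I expect the main obstacle to be the $d=3$ case of step (a): unlike $d\geq 4$ it is not covered by van der Kallen's stability theorem, so one must argue through the Vaserstein/Witt-group picture of Theorem \ref{witt relative} and Remark \ref{gs3}, taking care that the Witt group appearing there genuinely is the universal weak Mennicke symbol group and that $\WMS_3(A,I)$ and $\MS_3(A,I)$ are the symbols transported from $\ZZ\oplus I$ under the excision bijections of Lemma \ref{lem:bijection}. A secondary point deserving explicit mention is that niceness of $\MSE_d(A,I)$, as established in the cited theorems, must transfer faithfully through the excision isomorphisms in order to be read off as multiplicativity of $\wms$; this is routine given the equivalence of the niceness conditions proved after the definition of \emph{nice}, but it should be stated rather than left implicit.
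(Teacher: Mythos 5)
The paper states this corollary without proof, and your two-step plan --- identify $\MSE_d(A,I)$ with $\WMS_d(A,I)$, then use niceness to upgrade the weak Mennicke symbol to a genuine Mennicke symbol --- is exactly the intended argument (the introduction runs the same chain in reverse when discussing Fasel's theorem), and your reduction via the surjection $\WMS_d(A,I)\twoheadrightarrow\MS_d(A,I)$ is correct. The one obstacle you flag, whether for $d=3$ the Witt-group identification of Theorem \ref{witt relative} really exhibits $\MSE_3(A,I)$ as $\WMS_3(A,I)$, is in fact not needed, and your case split in step (a) can be avoided altogether: once Theorems \ref{relative-affine-three-fold} and \ref{main relative nice} provide a group structure on $\MSE_d(A,I)$ that satisfies the $Vv$ rule and is nice, the class map $\Um_d(A,I)\to\MSE_d(A,I)$ is itself a Mennicke symbol (MS1 is invariance under $\E_d(A,I)$, MS2 is precisely niceness), hence also a weak Mennicke symbol; the universal properties then furnish group homomorphisms $\WMS_d(A,I)\to\MSE_d(A,I)$ and $\MS_d(A,I)\to\MSE_d(A,I)$, while van der Kallen's Lemma 3.5 (quoted just before Theorem \ref{wms}) makes $\wms$ descend to a group homomorphism $\MSE_d(A,I)\to\WMS_d(A,I)$. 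All the composites around the triangle $\MSE_d\to\WMS_d\to\MS_d\to\MSE_d$ are the identity on generators, so every map is an isomorphism --- without invoking the stability bound $d\le 2n-4$ of Theorem \ref{wms} or the equality $W_E(\ZZ\oplus I)=\WMS_3(\ZZ\oplus I)$. Your secondary caveat (that niceness and the symbols must be transported coherently through the excision bijections of Lemma \ref{lem:bijection}) is well taken and is indeed handled by the equivalence of the niceness conditions proved after the definition.
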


\begin{question} Let $R$ be a local ring of Krull dimension $d\geq 2$. Let $I$ 
be an ideal of $R$. Is the group structure on $\MSE_d(R[X], I[X])$ nice? 
\end{question}

\begin{acknowledgement}

The first author is grateful to SPM Fellowship,CSIR for financial support.
\end{acknowledgement}

\medskip

\noindent
{\it Email: Anjan Gupta $<$anjan@math.tifr.res.in$>$}\\
{\it Email: Anuradha Garge $<$anuradha@cbs.ac.in$>$ \\
{\it Email: Ravi A. Rao $<$ravi@math.tifr.res.in$>$}

\end{document}

\begin{lemma}\label{gbs} Let $R$ be a  commutative noetherian ring of
Krull dimension $d$ and $I$ an ideal in $R$. Let $(a_0, a_1, \ldots,
a_n) \in \Um_{n+1}(R, I)$, $n \geq d+1$. Then there exists $i_1, i_2,
\ldots, i_{d+1} \in I$ such that $$\sqrt{(a_1 + i_1a_0, a_2 + i_2a_0,
\ldots, a_{d+1} + i_{d+1}a_0, a_{d+2}, \ldots, a_n)} = \sqrt{I}$$.
\end{lemma}
\begin{proof} For an ideal $J$ in $R$, we define $d(J) =
\text{inf}\{\text{height}(P)\, | \,J \subset P\,  \text{but}\, I
\not\subset P \}$. Let $P_1, P_2, \ldots, P_n$ be finitely many
minimal prime ideals of $R$ not containing $I$. By prime avoidance we
can find $i_1, i_{1\,2}, i_{1\,3}, \ldots, i_{1\,n} \in I$ such that
$b_1 = a_1 + i_1a_0 + i_{1\,2}a_2 +  \ldots + i_{1\,n}a_{n}$ is not
contained in $\cup_{i = 1}^{n}P_i$. Therefore we have $d(b_1) \geq 1$
and $(a_0, b_1, a_2 \ldots, a_n) \in \Um_{n+1}(R, I)$. Again by prime
avoidance we have $i_{2}, i_{2\,3}, i_{2\,4}, \ldots, i_{2\,n} \in I$
such that $b_{2}= a_{2} + i_{2} a_0 + i_{2\,3}a_3 +  \ldots +
i_{2\,n}a_{n}$ is not contained in any of the minimal prime ideals
over $b_1$ not containing $I$. Then we have $d(b_{1}, b_2) \geq 2$ and
$(a_0, b_1, b_2, a_3, \ldots, a_n) \in \Um_{n+1}(R, I)$. Continuing in
this way we have $b_1, b_2, \ldots,b_{d+1} \in I$ such that $d(b_1,
b_2, \ldots,b_{d+1})  \geq d+1$. This means that no prime ideal over
$(b_1, b_2, \ldots,b_{d+1}, a_{d+2}, \ldots, a_n)$ does not contain
$I$. But the former is contained in $I$. Therefore $\sqrt{(b_1, b_2,
\ldots,b_{d+1}, a_{d+2}, \ldots, a_n)} = \sqrt{I}$. But $(b_1, b_2,
\ldots,b_{d+1}, a_{d+2}, \ldots, a_n) = (a_1 + i_1a_0, a_2 + i_2a_0,
\ldots, a_{d+1} + i_{d+1}a_0, a_{d+2}, \ldots, a_n)$. So the result
follows.
\end{proof}

\begin{lemma}\label{rm-newman2}{{\rm(}Relative Mennicke--Newman{\rm)} }
Let $R$ be a commutative noetherian ring  of Krull dimension $d$ with
$d \leq 2n - 3$ and $I$ an ideal in $R$.  Let $v, w \in \Um_n(R,
I)$. Then there exists $\varepsilon_1$, $\varepsilon_2  \in E_n(R, I)$
such that $v\varepsilon_1 = (a, a_2, a_3, \ldots, a_{n})$ and
$w\varepsilon_2 = (b, a_2, a_3, \ldots, a_{n})$ .
unit modulo $(a_1, a_2, \ldots, a_{n-1})$.  
\end{lemma}
\begin{proof} Proof of Lemma \ref{rm-newman1} shows that by $\E_n(A,
I)$ action if necessary we may assume that $v = (a_1, a_2, \ldots,
a_n)$ and $w = (b_1, b_2, \ldots, b_n)$ such that $a_1 = b_1 = 1-
\lambda$ and $I =(\lambda)$. Now considering $(a_1b_1, a_2, a_3,
\ldots,a_n, b_2, b_3, \ldots,b_n) \in \Um_n(R, I)$ and using Lemma
\ref{gbs} we can add suitable $I$ multiples of $a_1b_1$ to $a_i, b_i$
to have $\sqrt{(a_2, a_3, \ldots,a_n, b_2, b_3, \ldots,b_n)} =
\sqrt{I}$. Therefore we have $x \in (a_2, a_3, \ldots, a_n)$ and $y
\in (b_2, b_3, \ldots, b_n)$ such that $x+y = \lambda ^n$. Now
replacing $a_1$ by $a_1 + \lambda x$ and $b_1$ by $b_1 - \lambda y$ we
may assume that $a_1 - b_1 = \lambda ^{n+1}$. Since $a_1 \equiv
b_1\equiv 1 \vpmod \lambda$ by elementary action we  change $v, w$ to
$(a_1, \lambda^{n+2}a'_2, \lambda^{n+2}a'_3, \ldots,
\lambda^{n+2}a'_n)$ and $(b_1, \lambda^{n+2}b'_2 , \ldots,
\lambda^{n+2}b'_n)$ respectively. Since $\lambda^{n+2}a'_i -
\lambda^{n+2}b'_i = c_i\lambda (a_1 - b_1)$, $c_i = a'_i - b'_i \in R$
by further elementary action we can make all but first coordinate
same.
\end{proof}